\documentclass[12pt]{amsart}

\usepackage[english]{babel}
\usepackage[alphabetic]{amsrefs}
\usepackage{graphicx}
\graphicspath{ {images/} }
\usepackage[T1]{fontenc}
\usepackage{txfonts}
\usepackage{times}
\usepackage{amssymb,verbatim,amscd,amsfonts,amsmath,amsthm,enumerate}
\usepackage[all]{xy}
\usepackage{subfig}
\usepackage{tikz}
\usetikzlibrary{shapes,arrows,shadows}
\usetikzlibrary{decorations.markings}
\usepackage{color}
\usepackage[normalem]{ulem}
\usepackage{hyperref}
\usepackage{wrapfig}
\usetikzlibrary{arrows}
\usepackage{pb-diagram}




\setlength{\textheight}{680pt} \setlength{\topmargin}{-50pt}
 \setlength{\textwidth}{500pt}
\setlength{\evensidemargin}{-10pt}
\setlength{\oddsidemargin}{-10pt}

\newcommand{\C}{\ensuremath{\mathbb{C}}}
\newcommand{\N}{\ensuremath{\mathbb{N}}}

\def\0{{\bf 0}}

\def\N{\mathbb{N}}

\def\C{\mathbb{C}}

\newtheorem{theorem}{Theorem}[section]
\newtheorem{proposition}[theorem]{Proposition}
\newtheorem{corollary}[theorem]{Corollary}
\newtheorem{lemma}[theorem]{Lemma}

\newtheorem{remark}[theorem]{Remark}

\newtheorem{definition}[theorem]{Definition}
\newtheorem{question}[theorem]{Question}

\begin{document}

\title{On Infinitely generated Fuchsian groups of some infinite genus surfaces}
\author{John A. Arredondo and Camilo Ram\'irez Maluendas}
\maketitle

\begin{abstract}
In this paper, for a non compact and orientable surface $S$ been either: the Infinite Loch Ness monster, the Cantor tree and the Blooming Cantor tree, we construct explicitly an infinitely generated Fuchsian group $\Gamma<PSL(2,\mathbb{R})$, such that the quotient  $\mathbb{H}/\Gamma$ is a hyperbolic surface homeomorphic to  $S$. 

\end{abstract}

\textbf{Key words.} Infinite Loch Ness Monster, Cantor tree, Blooming Cantor tree, Geometric Schottky groups, Non-compact surfaces.


\tableofcontents


\section{Introduction}



A \emph{classical} problem during the 19th century, in which several authors were  focused \emph{e.g.}, Felix Klein, Hermann Schwarz, between others, known as the \emph{uniformization problem, \cite{Abi}}, said that: being $S$ a Riemann surface, find all domains $\widetilde{S}\subset \hat{\mathbb{C}}$ and holomorphic  functions $t: \widetilde{S}\to S$ such that  at each point $p\in S$, $t$ is a local uniformizing variable at $p$. Equivalently, from view of the Covering Spaces theory, there is a topological disc $B\subset S$ with center $p$ such that the restriction of $t$  to each component of $t^{-1}(B)$ is a homeomorphism. Moreover, it is enforced the condition that the space$\widetilde{S}$ is a covering space of $S$ with holomorphic projection map $t: \widetilde{S}\to S$. However, the twenty-second problem of the Mathematical Problems published by David Hilbert \cite{Hilbert}, proposes a major challenge in the uniformization problem. It was to find one uniformization being $\widetilde{S}$ simply connected. The answer to this problem is known as the Uniformization Theorem, which says:

\begin{theorem}
\label{T:uni}
 \cite[p. 174]{Bear1} Let $S$ be a Riemann surface, let $\widetilde{S}$ be the universal covering surface of $S$ chosen from the surfaces $\hat{\mathbb{C}}$, $\mathbb{C}$, and $\Delta$. Let $\Gamma$ be the  cover group of $S$. Then
\begin{enumerate}
  \item $S$ is conformally equivalent to $\widetilde{S}/ \Gamma$;
  \item $\Gamma$  is a M\"{o}bius group which acts discontinuously on $\widetilde{S}$;
  \item Apart from the identity, the elements of $\Gamma$ have no fixed points in $\widetilde{S}$;
  \item The cover group $\Gamma$ is isomorphic to $\pi (S)$.
\end{enumerate}
\end{theorem}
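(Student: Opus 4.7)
The plan is to combine standard covering space theory with the deep analytic classification of simply connected Riemann surfaces. First I would lift the complex structure of $S$ to its topological universal cover $\widetilde{S}$ along the covering projection $t$: because $t$ is a local homeomorphism, the pullback of any holomorphic atlas on $S$ is a holomorphic atlas on $\widetilde{S}$, and this is the unique Riemann surface structure making $t$ holomorphic. This reduces the theorem to understanding simply connected Riemann surfaces and their holomorphic automorphism groups.

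The heart of the argument, and the main obstacle, is the classical analytic fact that every simply connected Riemann surface is conformally equivalent to exactly one of $\hat{\mathbb{C}}$, $\mathbb{C}$, or $\Delta$. The standard route is via potential theory: one tries to construct a Green's function on $\widetilde{S}$ using Perron's method for the Dirichlet problem; existence of the Green's function places $\widetilde{S}$ in the hyperbolic case ($\Delta$), while the two non-hyperbolic cases are distinguished by whether bounded non-constant harmonic functions exist, leading to $\mathbb{C}$ or $\hat{\mathbb{C}}$. Alternatively one can follow Koebe's original normal families approach. Either way, this step does the real analytic work; the remaining four items are then essentially formal.

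Granting the classification, statement (1) is the definition of the quotient complex structure on $\widetilde{S}/\Gamma$ together with the covering property of $t$. For (2), any deck transformation $\gamma\in\Gamma$ satisfies $t\circ\gamma=t$; since $t$ is a local biholomorphism, $\gamma$ equals $t^{-1}\circ t$ locally and is therefore holomorphic. Hence $\Gamma$ sits inside the holomorphic automorphism group of $\widetilde{S}$, which in each of the three models is a subgroup of the M\"obius group: $PSL(2,\mathbb{C})$ for $\hat{\mathbb{C}}$, the complex affine group for $\mathbb{C}$, and $PSL(2,\mathbb{R})$ for $\Delta$. Discontinuity of the $\Gamma$-action is immediate from the existence of evenly covered neighbourhoods for $t$.

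Statement (3) follows by the uniqueness of lifts in covering space theory: if $\gamma\in\Gamma$ fixes a point $\tilde p\in\widetilde{S}$, then $\gamma$ and the identity are both lifts of $t$ agreeing at $\tilde p$, hence agree globally by connectedness of $\widetilde{S}$. Finally, (4) is the standard basepoint construction: fixing $\tilde p\in\widetilde{S}$ and setting $p=t(\tilde p)$, the map sending a class $[\ell]\in\pi_1(S,p)$ to the unique deck transformation carrying $\tilde p$ to the endpoint of the lift of $\ell$ starting at $\tilde p$ is a group isomorphism $\pi_1(S)\to\Gamma$, well-defined by the homotopy lifting property and bijective because $\widetilde{S}$ is simply connected and $t$ is a regular covering. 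The only genuinely hard ingredient in the whole chain is the classification of simply connected Riemann surfaces; everything else is a direct application of covering space machinery once the three models and their automorphism groups are known.
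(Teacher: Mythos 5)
This theorem is not proved in the paper at all: it is the classical Uniformization Theorem, quoted verbatim from Beardon \cite[p.\ 174]{Bear1} as background, so there is no internal proof to compare your attempt against. Judged on its own terms, your outline follows the standard textbook route and the formal parts are handled correctly: pulling back the holomorphic atlas along the covering map gives the unique complex structure on $\widetilde{S}$ for which $t$ is holomorphic; deck transformations are then holomorphic automorphisms of the model surface, hence lie in $PSL(2,\mathbb{C})$, the affine group, or the conformal automorphism group of $\Delta$ (conjugate to $PSL(2,\mathbb{R})$); proper discontinuity comes from evenly covered neighbourhoods; freeness of the action is uniqueness of lifts; and $\Gamma\cong\pi_1(S)$ is the usual monodromy isomorphism for a regular (universal) covering, which also yields item (1).

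The one point to be clear about is that what you label ``the main obstacle'' --- the statement that every simply connected Riemann surface is conformally equivalent to exactly one of $\hat{\mathbb{C}}$, $\mathbb{C}$, $\Delta$ --- is not an ingredient you may wave at with a mention of Perron's method or Koebe's normal families; it \emph{is} the analytic content of the theorem, and your two-sentence description of Green's functions and the dichotomy on bounded harmonic functions is a pointer to a proof, not a proof. So as a self-contained argument your proposal has a genuine gap exactly there; as a reduction of items (1)--(4) to that classical classification plus covering-space theory it is sound, and in that reduced form it is no less rigorous than the paper itself, which simply cites the result. If you intend your write-up to play the same role the citation plays in the paper, state explicitly that the classification of simply connected Riemann surfaces is being assumed (with a reference), rather than presenting it as a step you have carried out.
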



Given that, the complex plane $\mathbb{C}$ uniformize itself, the cylinder and the torus \cite[p. 193]{Far}, it means, if the (holomorphic) universal covering surface of the Riemann surface $S$ is the complex plane $\mathbb{C}$, then $S$ is conformally equivalent to $\mathbb{C}$, $\mathbb{C}-\{0\}$ or a torus. When $\Gamma$ has the identity as unique element, then the Riemann surface $\mathbb{C}/\Gamma$ is $\mathbb{C}$. If is considered $\Gamma$ generated by the M\"{o}bius transformation $z\mapsto z+1$, then the quotient space $\mathbb{C}/\Gamma$ is conformally equivalent to $\mathbb{C}-\{0\}$  (the cylinder). Finally, considering the subgroup $\Gamma$ generated by the M\"{o}bius transformations $z\mapsto z+1$ and $z\mapsto z+\tau$, where $\tau \in \mathbb{C}$ and $Im (\tau)>0$, then the quotient space $\mathbb{C}/\Gamma$ is a torus. On the other hand, the only Riemann surface $S$ which has as universal covering the sphere, is the sphere itself \cite[IV. 6.3. Theorem]{Far}. Hence, it is natural to ask:

\begin{question}
Given any non-compact Riemann surface $S$. Which is the subgroup $\Gamma$ of the isometries of the hyperbolic plane $\mathbb{H}$, such that the quotient space $\mathbb{H}/ \Gamma$ is a Riemann surface homeomorphic to $S$?

\end{question}


The present work answers to this question in the case $S$ being either: the \emph{Infinite Loch Ness monster}, the \emph{Cantor tree} and the \emph{Blooming Cantor tree}. More precisely we prove that:

\begin{theorem}
\label{T:0.1}

Let  $\Gamma < PSL(2,\mathbb{R})$ be the subgroup generated by the set of M\"{o}bius transformations $\{f_n(z)$, $g_n(z)$, $f^{-1}_n(z)$, $g^{-1}_n(z): n\in \mathbb{Z}\}$, where
\begin{align*}
f_{n}(z)      & := \frac{(8n+4)z-(1+8n(8n+4))}{z-8n}, \\
g_{n}(z)      & := \frac{(8n+6)z +(-1-(8n+2)(8n+6))}{z-(8n+2)},\\
f_{n}^{-1}(z) & := \frac{-8nz+(1+8n(8n+4))}{-z+(8n+4)}, \\
g_{n}^{-1}(z) & := \frac{-(8n+2)z +(1+(8n+2)(8n+6))}{-z+(8n+6)}.
\end{align*}
Then $\Gamma$ is an infinitely generated Fuchsian group and the Riemann surface $\mathbb{H} / \Gamma$ is homeomorphic to the Infinite Loch Ness monster.
\end{theorem}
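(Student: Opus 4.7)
The plan is to exhibit the four prescribed families as a system of Schottky side-pairings for a fundamental polygon in $\mathbb{H}$, apply the Schottky/Poincar\'e machinery to conclude that $\Gamma$ is Fuchsian and freely (hence infinitely) generated, and then identify the quotient topologically via its end space and genus.

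First I would compute the isometric circles. A direct calculation gives that $f_n$ has isometric circle $|z-8n|=1$ mapped onto the isometric circle $|z-(8n+4)|=1$ of $f_n^{-1}$; similarly $g_n$ pairs the unit circles centered at $8n+2$ and $8n+6$. All generators have trace $4$, hence are hyperbolic. The isometric circles are exactly the unit Euclidean semicircles centered at the even integers; as hyperbolic geodesics these are pairwise disjoint, although consecutive ones are tangent at the odd integers on $\partial\mathbb{H}$. With the candidate fundamental domain
\[
D := \{\,z\in\mathbb{H}: |z-2k|>1 \text{ for every }k\in\mathbb{Z}\,\},
\]
I would apply Poincar\'e's polygon theorem in the infinite-sided form supplied by the \emph{geometric Schottky groups} machinery announced in the introduction. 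The hypotheses to verify are straightforward: the pairings are hyperbolic isometries of $\mathbb{H}$, every vertex of $D$ lies on $\partial\mathbb{H}$ so the angle sums are automatically zero, and the metric completeness at each ideal cycle is transparent from the explicit formulas (Klein's combination/ping-pong lemma across disjoint circles provides the freeness in parallel). The conclusion is that $\Gamma$ is Fuchsian with fundamental domain $D$, freely generated by $\{f_n,g_n:n\in\mathbb{Z}\}$, and in particular infinitely generated.

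Next I would read off the topology of $S:=\mathbb{H}/\Gamma$. Labeling the semicircle centered at $8n$ by $a_n$ and the one centered at $8n+2$ by $b_n$, with inverses attached to their partners, the arcs bounding $D$ appear along $\partial\mathbb{H}$ in the order
\[
\cdots\,a_{-1}\,b_{-1}\,a_{-1}^{-1}\,b_{-1}^{-1}\;a_{0}\,b_{0}\,a_{0}^{-1}\,b_{0}^{-1}\;a_{1}\,b_{1}\,a_{1}^{-1}\,b_{1}^{-1}\,\cdots,
\]
so each block $a_nb_na_n^{-1}b_n^{-1}$ is the standard commutator attaching a handle; since $\Gamma\subset PSL(2,\R)$ preserves orientation, $S$ is orientable and of infinite genus, one handle per $n\in\Z$. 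For the end count, tracing the vertex cycles shows that $f_n$ identifies $\{8n-1,8n+5\}$ and $\{8n+1,8n+3\}$ while $g_n$ identifies $\{8n+1,8n+7\}$ and $\{8n+3,8n+5\}$, so within each block the four odd integers fuse into one class, and since consecutive blocks share the vertex $8n+7=8(n+1)-1$, every odd integer on $\partial\mathbb{H}$ lies in a single $\Gamma$-orbit. Combined with the fact that the ``top'' of $D$ (large imaginary part) is a single connected unbounded region, this yields that $S$ has exactly one topological end, and every neighborhood of that end meets every handle, so all the genus accumulates there. By the Ker\'ekj\'art\'o--Richards classification of non-compact surfaces, $S$ is homeomorphic to the Infinite Loch Ness monster.

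The main obstacle I expect is the last step: passing from the combinatorial side-pairing data to a rigorous statement that $S$ has one end with all genus accumulating at it. The delicate point is to show that the ``vertical'' direction $\mathrm{Im}(z)\to\infty$ in $D$ and the ``horizontal'' approaches to the odd-integer ideal vertices, which a priori look like distinct ways of escaping to infinity inside $D$, actually fuse into a single end of $S$ after applying the identifications. A careful compact exhaustion of $S$, built from truncations of $D$ at height $y\leq N$ and outside $\varepsilon$-horodisks at the ideal vertices, should exhibit the end space explicitly and rule out additional isolated ends.
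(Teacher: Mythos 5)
Your computations are all correct (the isometric circles, trace $4$, the pairing pattern $a_nb_na_n^{-1}b_n^{-1}$, and the identifications $8n-1\mapsto 8n+5$, $8n+1\mapsto 8n+3$ under $f_n$ and $8n+1\mapsto 8n+7$, $8n+3\mapsto 8n+5$ under $g_n$), and your genus argument is essentially the paper's: the paper realizes each block as a strip $\{0<Im(z)<3,\ -1+8n<Re(z)<7+8n\}$ whose projection is a once-punctured torus, one per $n\in\mathbb{Z}$. But two points need repair. First, the ``geometric Schottky groups machinery'' you invoke does not apply to this configuration: the Schottky description of Definition \ref{d:2.14} requires an $\epsilon>0$ whose closed hyperbolic $\epsilon$-neighborhoods of the isometric circles are pairwise disjoint, and this fails here because consecutive half-circles are tangent at the odd integers, i.e.\ they are asymptotic geodesics sharing an ideal endpoint, so their $\epsilon$-neighborhoods always meet. (This is why the paper uses that machinery only for Theorems \ref{T:0.2} and \ref{T:0.3}.) Likewise an infinite-sided Poincar\'e theorem carries a completeness hypothesis at the ideal tangency cycles that you would have to verify, and here the vertex cycle through the odd integers is infinite. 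The paper avoids all of this at a stroke: the generators have integer entries and determinant one, so $\Gamma\le PSL(2,\mathbb{Z})$ is discrete, and freeness is never needed. Your parallel ping-pong/Klein combination argument over the pairwise disjoint open half-disks does work (each $f_n$, $g_n$ maps the exterior of its isometric circle onto the interior of the paired one), so you should let that, or the integrality observation, carry discreteness rather than the Schottky description.

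Second, the one-end statement is exactly where your proposal stops at a sketch, and you correctly identify it as the obstacle: the vertex-cycle bookkeeping by itself does not exclude further ends. The paper closes this by the compact-exhaustion argument you anticipate. Given a compact $K\subset S$, it encloses a lift in a box $I_1\times I_2\subset\mathbb{H}$, sets $K':=\pi(B\cap R_0)$, and connects any two points of $S\setminus K'$ using vertical geodesics $\gamma_x$ with $x\notin I_1$ and horizontal lines $\gamma^y$ with $y\notin I_2$; the key verification is that $\pi(\gamma^y\cap R_0)$ is connected, because for small $y$ the finitely many gaps of $\gamma^y$ near the odd integers are chained to one another by the side identifications (the restriction of each generator to its isometric circle preserves Euclidean height, so height-$y$ endpoints match up). Supplying this argument, or an equivalent exhaustion by truncations of $R_0$, is what your proposal still owes; once it is in place, the genus accumulation at the unique end is automatic since the punctured tori $\widehat{S}_n$ eventually leave every compact set.
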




\noindent The surface with only one end and infinite genus is called the \emph{Infinite Loch Ness monster}. See Figure \ref{loch}.
\begin{figure}[h]
\begin{center}
\includegraphics[scale=0.8]{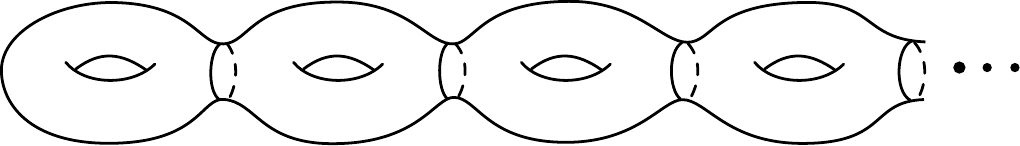}\\
  \caption{\emph{The Infinite Loch Ness monster.}}
   \label{loch}
\end{center}
\end{figure}

\begin{theorem}
\label{T:0.2}
Let $\Gamma <PSL(2,\mathbb{R})$ be the subgroup generated by the union $\cup_{n\in\mathbb{N}}J_n$, where the set
$$
J_n:= \{f_{n,k}(z), \, f^{-1}_{n,k}(z): k\in\{0,\ldots,2^{n-1}-1\}\}
$$
is formed by M\"{o}bius transformations, such as
\[
\begin{array}{rcl}
f_{n,k}(z) & = &\dfrac{-2\cdot(3^n\cdot 2 + 3 + 2s_{2k-1})z+\dfrac{2^2\cdot(3^n\cdot 2 + 3 + 2s_{2k-1})^2 -1 }{3^n \cdot 2^2}}{3^n\cdot 2^2z-2\cdot(3^n\cdot 2 + 3 + 2s_{2k-1})},\\
f^{-1}_{n,k}(z) & =&\dfrac{-2\cdot(3^n\cdot 2 + 3 + 2s_{2k-1})z-\dfrac{2^2\cdot (3^n\cdot 2 + 3 + 2s_{2k-1})^2 -1 }{3^n \cdot 2^2}}{-3^n\cdot 2^2 z-2\cdot(3^n\cdot 2 + 3 + 2s_{2k-1})}.\\
\end{array}
\]
Then $\Gamma$ is an infinitely generated Fuchsian group and the Riemann surface $\mathbb{H} / \Gamma$ is homeomorphic to the Cantor tree.
\end{theorem}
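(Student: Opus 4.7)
The plan is to realize $\Gamma$ as a geometric Schottky group by recognizing the generators as the side-pairings associated to a countable family of pairwise disjoint isometric circles in $\mathbb{H}$, and then read the topology of $\mathbb{H}/\Gamma$ off the combinatorial pattern of this pairing.

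First, I would compute, for each pair $(n,k)$, the isometric circles of $f_{n,k}$ and $f_{n,k}^{-1}$ (that is, the half-circles in $\mathbb{H}$ centered on $\mathbb{R}$ on which each map acts as a Euclidean isometry). From the common denominator $3^n\cdot 2^2$ one reads off that both circles have radius $1/(3^n\cdot 2^2)$, while the centers are symmetric about the origin at distance proportional to $3^n\cdot 2+3+2s_{2k-1}$. So at each level $n$ there are $2^n$ pairs of equal small half-circles, and the radii shrink geometrically with $n$.

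Second, the arithmetic heart of the proof is to verify that the whole family $\{I_{n,k},I'_{n,k}\}_{n\in\mathbb{N},\,0\le k<2^{n-1}}$ consists of pairwise disjoint closed half-discs in $\overline{\mathbb{H}}$. The sequence $s_{2k-1}$ is chosen precisely so that, at level $n$, the $2^n$ pairs of circles sit in a ternary-type subdivision of $\mathbb{R}$, nested inside the gaps left open by the levels $<n$; the factor $3^n$ in the radius ensures that the next generation fits strictly inside these gaps with room to spare. Once this disjointness is established, the Schottky/ping-pong criterion (or equivalently Poincar\'e's polygon theorem applied to the fundamental domain $F=\mathbb{H}\setminus\bigcup_{n,k}(D_{n,k}\cup D'_{n,k})$) yields at once that $\Gamma$ is discrete, free of infinite rank, purely hyperbolic, hence an infinitely generated Fuchsian group.

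Finally, I would identify the homeomorphism type of $\mathbb{H}/\Gamma$. The quotient is obtained from the infinite polygon $F$ by identifying $I_{n,k}$ with $I'_{n,k}$ via $f_{n,k}$. Since at each level the two paired circles bound a common connected piece of $F$ and are not linked with any other pair, each pairing produces a pair-of-pants-type piece rather than a handle, so the quotient has genus $0$. The space of ends corresponds to the set of accumulation points of the circle family on $\mathbb{R}\cup\{\infty\}$; by construction the endpoints of the $I_{n,k}$ realize the standard ternary Cantor set on $\mathbb{R}$, so the end space of $\mathbb{H}/\Gamma$ is homeomorphic to a Cantor set. By Ker\'ekj\'art\'o--Richards' classification, a genus-zero orientable surface whose end space is a Cantor set is the Cantor tree.

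The main obstacle I expect is the second step: tracking the precise positions of the intervals $[\text{center}-r,\text{center}+r]$ for all $(n,k)$ simultaneously and proving global disjointness requires a careful inductive bookkeeping on the sequence $s_{2k-1}$, together with a tight estimate showing that the geometric shrinking rate $3^{-n}$ dominates the accumulation of centers at each dyadic level. Once disjointness is in hand, everything else is a combination of standard Schottky-theoretic machinery and Richards' classification.
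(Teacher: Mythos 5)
Your overall architecture is the same as the paper's: recognize $C(f_{n,k})$ and $C(f^{-1}_{n,k})$ as isometric circles of radius $1/(3^{n}\cdot 2^{2})$ centred at the midpoints $\pm\alpha_{n,k}$ of the removed middle thirds of $[1,2]$ and $[-2,-1]$, check disjointness, invoke Schottky machinery for discreteness, and then classify the quotient surface. Two steps, however, are genuinely under-justified. First, for an \emph{infinitely} generated group, pairwise disjointness of the circles is not the criterion the paper actually uses: the circles accumulate on $[-2,-1]\cup[1,2]$, so the paper works in Zielicz's framework of geometric Schottky groups, whose definition requires a \emph{uniform} $\epsilon>0$ such that the closed hyperbolic $\epsilon$-neighbourhoods of \emph{all} the circles are pairwise disjoint (condition 5 of Definition \ref{d:2.14}); this is what Lemma \ref{l:2.11} and the disjoint vertical strips are for. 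Your appeal to finite-rank ping-pong/Poincar\'e should be upgraded to verify this uniform separation; otherwise discreteness and the fundamental-domain property of $\bigcap_{n,k}\overline{\hat{C}(f_{n,k})}$ are not covered by the statements you cite.

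Second, and more seriously, the end space cannot be identified by the principle ``ends $=$ accumulation points of the circle family on $\mathbb{R}\cup\{\infty\}$.'' That principle already fails for the finite subfamilies $J_1\cup\cdots\cup J_n$, whose circles accumulate nowhere while the quotient is a sphere with $2n$ punctures, hence $2n$ ends; and for the full group the circles accumulate on \emph{two} disjoint copies of the ternary Cantor set (one in $[1,2]$, one in $[-2,-1]$), which the side-pairings identify with each other, while one must also rule out extra ends coming from the non-compact part of the fundamental domain near $\infty$ and near the removed middle thirds. The paper settles all of this with an explicit exhaustion $K_n=\pi(\tilde{K}_n\cap F(\Gamma))$, a count showing $S\setminus K_n$ has exactly $2^{n}$ components nesting in a binary pattern, and an explicit homeomorphism $Ends(T2^{\omega})\to Ends(S)$; some argument of this kind is indispensable and is missing from your sketch. (Your genus-zero claim is fine in substance: the pairing $+\alpha_{n,k}\leftrightarrow-\alpha_{n,k}$ is nested rather than interleaved on $\partial\mathbb{H}$, so no handles arise --- exactly what distinguishes this configuration from the interleaved one producing the Loch Ness monster in Theorem \ref{T:0.1}. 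Also note there are $2^{n-1}$ pairs, i.e.\ $2^{n}$ circles, at level $n$, not $2^{n}$ pairs.)
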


\noindent The surface with ends space the Cantor set and without genus is called the \emph{Cantor tree}. See Figure \ref{cantor}-a.
\begin{figure}[ht]
     \centering
     \begin{tabular}{ccc}
      \includegraphics[scale=0.3, angle=-90]{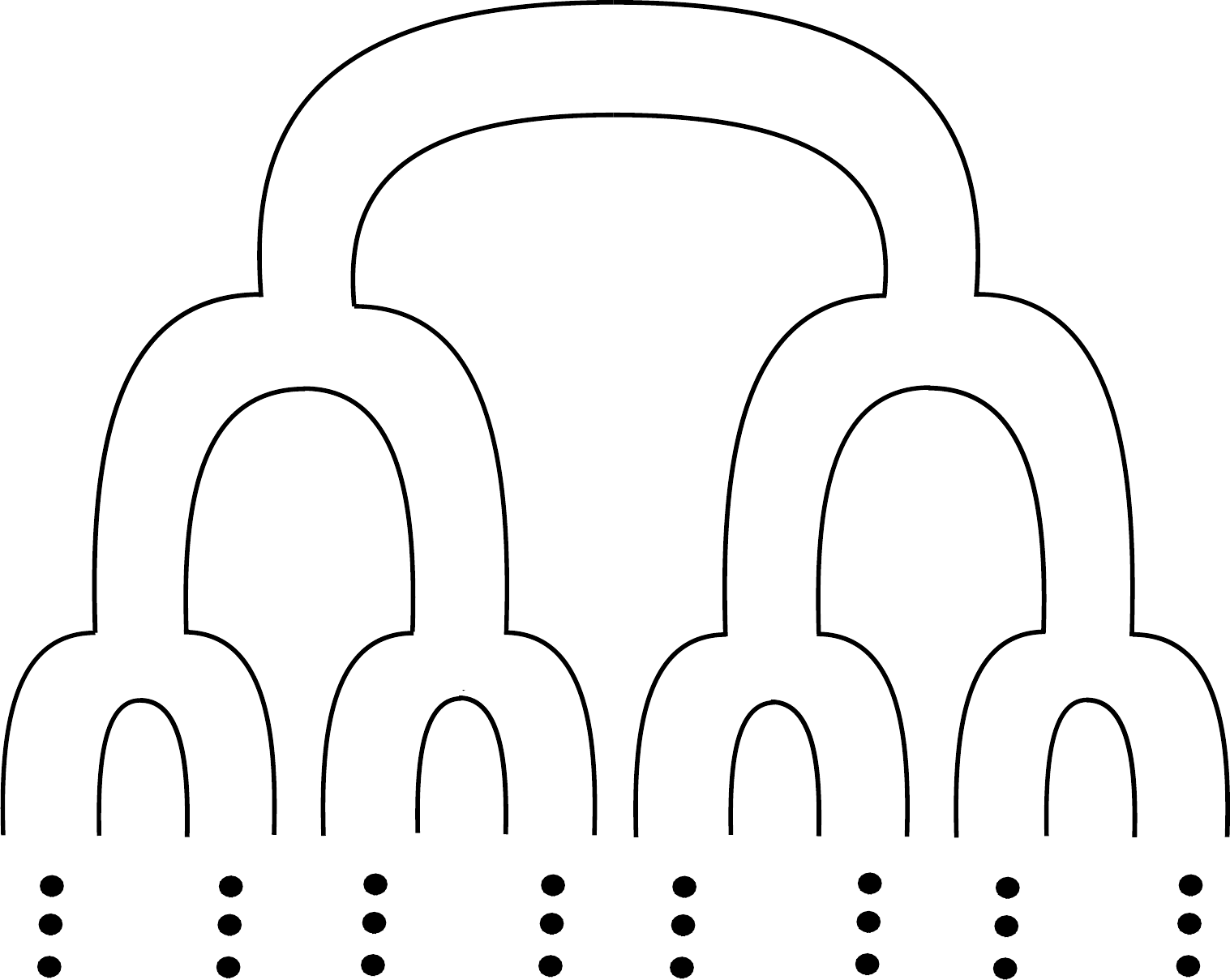} &    &  \includegraphics[scale=0.3, angle=-90]{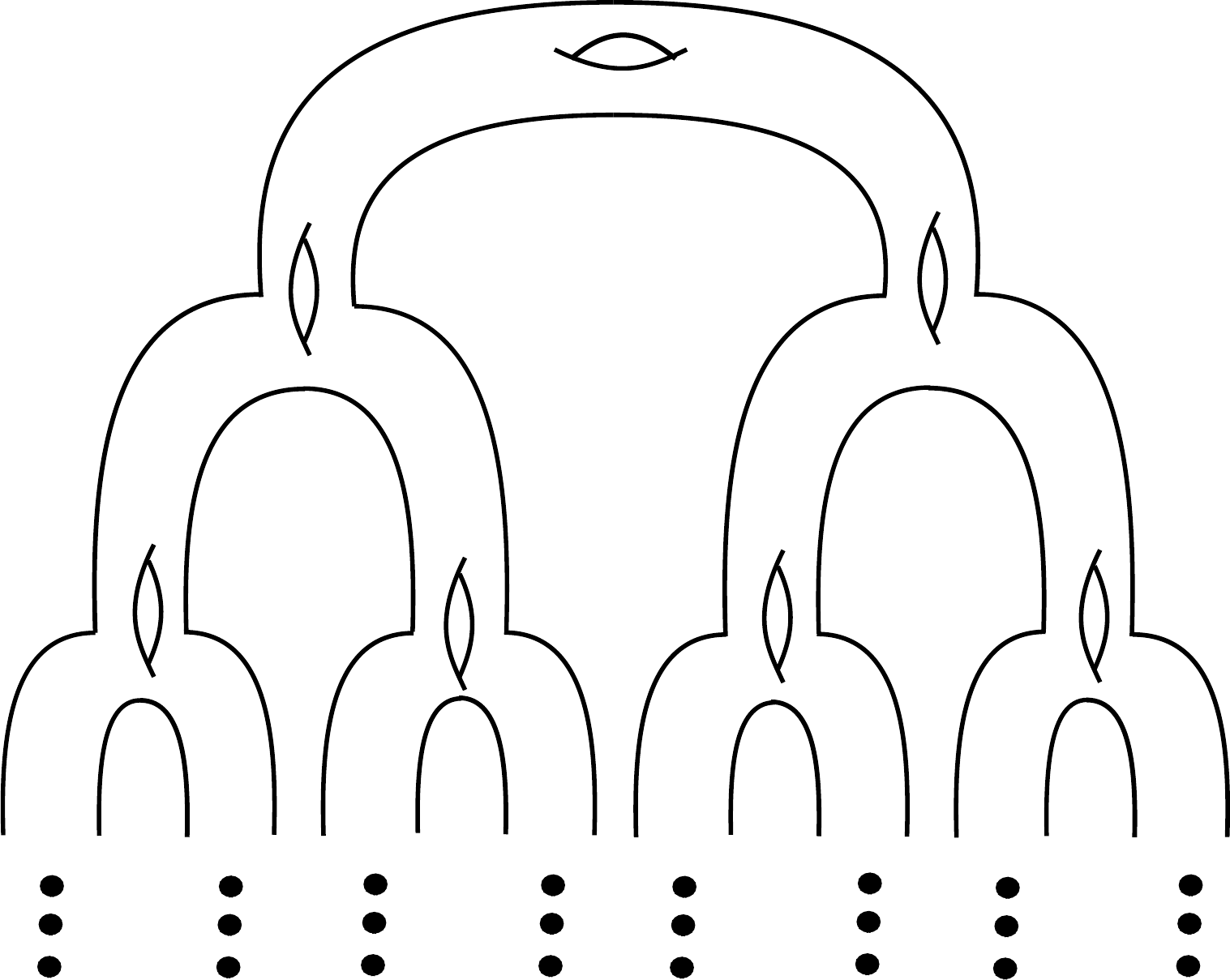} \\
     \text{a. \emph{The Cantor tree.}}                         &                 & \text{b. \emph{The Blooming Cantor tree.}} \\
     \end{tabular}
     \caption{\emph{Surfaces whose ends spaces are homeomorphic to the Cantor set.}}
     \label{cantor}
\end{figure}

\begin{theorem}
\label{T:0.3}

Let $\Gamma< PSL(2,\mathbb{R})$ be the group generated by  the union $\cup_{n\in \mathbb{N}}J_n$, where the set
\[
J_{n}:= \left\{f_{n,k}(z), \, f_{n,k}^{-1}(z), \, \Bigl( f_{n,k}\Bigr)_{s,m}(z), \, \Bigl( f_{n,k}\Bigr)_{s,m}^{-1}(z):\, k\in\{0,\ldots,2^{n-1}-1\},\, s\in \{1,\ldots, 4\}, \, m\in\mathbb{N}\right\},
\]
is composed by M\"{o}bius transformations\footnote{They will given explicitly in section 3.3}. Then $\Gamma$ is an infinitely generated Fuchsian group and the Riemann surface $\mathbb{H}/\Gamma$ is homeomorphic to the Blooming Cantor tree.
\end{theorem}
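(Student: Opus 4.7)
The plan is to mirror the strategy used for Theorems \ref{T:0.2} and \ref{T:0.1} and exhibit $\Gamma$ as a geometric Schottky group whose fundamental polygon is built by side-pairings. First I would fix the family of M\"obius transformations $f_{n,k}$ exactly as in Theorem \ref{T:0.2}, since these form the \emph{backbone} that produces the Cantor tree. Each $f_{n,k}$ is hyperbolic and pairs two disjoint closed half-discs $D_{n,k}^{+}, D_{n,k}^{-}\subset\overline{\mathbb{H}}$, namely the isometric discs of $f_{n,k}$ and $f_{n,k}^{-1}$. For the blooming step I would, for each index $(n,k)$, insert four sequences (indexed by $s\in\{1,\ldots,4\}$ and $m\in\mathbb{N}$) of paired half-discs in the portion of $\partial\mathbb{H}$ lying between $D_{n,k}^{+}$ and $D_{n,k}^{-}$, shrinking geometrically with $m$, and let $\bigl(f_{n,k}\bigr)_{s,m}$ be the unique hyperbolic M\"obius map sending one disc of each new pair to the other. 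The formulas for centers and radii should be chosen so that (i) all the new half-discs are pairwise disjoint, (ii) they are disjoint from every backbone disc $D_{n',k'}^{\pm}$, and (iii) their radii tend to $0$, so that they accumulate only on the limit set of the backbone.

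The next step is to verify that the whole countable collection of closed half-discs is pairwise disjoint. This is the Schottky-type hypothesis that, via the Klein--Maskit combination argument used in the previous two theorems, guarantees at once that $\Gamma$ is discrete (hence Fuchsian), that the given set is a free generating set with no relations, and that $F:=\mathbb{H}\setminus\bigcup_{n,k,s,m}(D_{n,k}^{\pm}\cup(D_{n,k})_{s,m}^{\pm})$ is a fundamental domain. Since the generating set is countably infinite and the argument produces no relations, the group $\Gamma$ is infinitely generated.

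Finally I would identify $\mathbb{H}/\Gamma$ topologically by analyzing the side-pairings on $F$. Switching off the new generators recovers exactly the construction of Theorem \ref{T:0.2}, so the identifications coming from $\{f_{n,k}\}$ alone already produce a Cantor tree: the pairing of $D_{n,k}^{+}$ with $D_{n,k}^{-}$ across successive dyadic levels creates the binary branching whose end space is a Cantor set. Each additional pairing $\bigl(f_{n,k}\bigr)_{s,m}$ glues two half-discs that sit on the same \emph{node} of the tree, and each such gluing attaches one handle at that node; since $s$ runs over four values and $m$ over all of $\mathbb{N}$, infinitely many handles accumulate at every node of the tree. By the Ker\'ekj\'art\'o--Richards classification of non-compact orientable surfaces, this genus-and-ends data singles out the Blooming Cantor tree up to homeomorphism.

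The main obstacle I expect is the disjointness verification in step two. There are countably many isometric circles to control, arranged in a self-similar way that clusters near every node of the dyadic backbone, and one must pin down the parameters defining $\bigl(f_{n,k}\bigr)_{s,m}$ so that the new circles shrink fast enough in $m$ to avoid both each other and the backbone circles, yet remain inside the interval of $\partial\mathbb{H}$ assigned to $(n,k)$. Once a clean choice is made and the disjointness is checked, the conclusions that $\Gamma$ is Fuchsian and that $\mathbb{H}/\Gamma$ is the Blooming Cantor tree follow by the same reasoning developed for Theorems \ref{T:0.1} and \ref{T:0.2}.
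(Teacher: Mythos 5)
Your overall architecture coincides with the paper's: keep the backbone transformations $f_{n,k}$ from the Cantor-tree construction, insert near each backbone circle sequences of paired isometric half-circles whose radii tend to zero, verify that the whole countable family of half-circles is pairwise disjoint so that one gets a geometric Schottky (hence Fuchsian, infinitely generated) group with the common exterior as fundamental domain, and finish by computing ends and genus and invoking the Ker\'ekj\'art\'o--Richards classification. The disjointness issue you single out as the main obstacle is exactly what the paper resolves by an explicit choice of parameters (backbone radius $\tfrac{1}{3^{n}\cdot 6\cdot 2}$, inserted radii $\tfrac{1}{10\cdot 3^{n}\cdot 6\cdot 2^{m}}$, with the inserted circles confined to the second and fifth sixths of each middle-third interval) combined with the strip argument of Lemma \ref{l:2.11}.

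There is, however, a genuine gap in your topological identification: the claim that ``each such gluing attaches one handle at that node'' is false. A single side-pairing of two disjoint complete geodesics on the boundary of a simply connected fundamental domain in $\mathbb{H}$ adds no genus --- the quotient of $\mathbb{H}$ by one hyperbolic element is an annulus, and a Schottky-type group all of whose pairings are pairwise unlinked along $\partial\mathbb{H}$ has planar quotient. This is precisely why the surface of Theorem \ref{T:0.2}, built entirely from such (nested, hence unlinked) pairings, is the \emph{planar} Cantor tree. Genus is created only by \emph{linked} pairs of pairings, i.e.\ quadruples of circles occurring along $\mathbb{R}$ in the commutator pattern $a\,b\,a^{-1}b^{-1}$, as with the circles centred at $8n$, $8n+2$, $8n+4$, $8n+6$ in the proof of Theorem \ref{T:0.1}. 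Accordingly, the paper places the inserted circles so that for every $m$ the pairings $\bigl(f_{n,k}\bigr)_{1,m}$ and $\bigl(f_{n,k}\bigr)_{2,m}$ (and likewise $\bigl(f_{n,k}\bigr)_{3,m}$ and $\bigl(f_{n,k}\bigr)_{4,m}$) interleave across the imaginary axis: the $\tfrac{7}{10}$ and $\tfrac{3}{10}$ offsets force the cyclic order $C_{2,m}^{-1},\,C_{1,m}^{-1},\,C_{2,m},\,C_{1,m}$, so each such quadruple contributes exactly one handle and infinitely many handles accumulate toward every end. If your four sequences were placed without this linking --- for instance all nested --- the quotient would remain planar (and the extra pairings would perturb the end space instead), so Theorem \ref{t:2.5} would identify a surface other than the Blooming Cantor tree. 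You must therefore specify the interleaving pattern of the inserted pairings before the final appeal to the classification theorem is valid.
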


\noindent The surface with ends space the Cantor set and each ends has infinite genus is called the \emph{Blooming Cantor tree}. See Figure \ref{cantor}-b.

\begin{corollary}
The  infinitely Loch Ness Monster, the Cantor tree and the Blooming Cantor tree are geodesically complete.
\end{corollary}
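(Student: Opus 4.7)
The plan is to deduce geodesic completeness of each of the three quotient surfaces directly from the geodesic completeness of the hyperbolic plane $\mathbb{H}$. With its standard metric of constant curvature $-1$, the upper half plane $\mathbb{H}$ is a model Riemannian manifold and every geodesic in it extends to all of $\mathbb{R}$. Elements of $PSL(2,\mathbb{R})$ act as orientation-preserving isometries of $\mathbb{H}$, so each group $\Gamma$ produced by Theorems \ref{T:0.1}, \ref{T:0.2} and \ref{T:0.3} acts by isometries.

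The steps I would carry out are the following. First, record that $\Gamma$ being Fuchsian means, by definition, that it acts on $\mathbb{H}$ properly discontinuously. Second, verify that $\Gamma$ acts freely, i.e., contains no elliptic elements. In each of the three theorems the generators are produced by a Schottky-type pairing of disjoint half-planes whose boundaries are geodesics in $\mathbb{H}$, so every generator is a hyperbolic M\"obius transformation with no fixed points inside $\mathbb{H}$; the ping-pong/Schottky argument that supports those theorems then shows that every non-trivial reduced word in the generators is again hyperbolic, and hence acts without fixed points on $\mathbb{H}$. Consequently the projection $\pi: \mathbb{H} \to \mathbb{H}/\Gamma$ is a smooth Riemannian covering and the metric of constant curvature $-1$ descends to a genuine (non-singular) hyperbolic metric on $\mathbb{H}/\Gamma$.

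Third, I would invoke the standard fact that a Riemannian covering by a geodesically complete manifold forces the base to be geodesically complete. Explicitly, given any geodesic $\gamma$ in $\mathbb{H}/\Gamma$ defined on some maximal interval $[0,T)$, one lifts it through $\pi$ to a geodesic $\widetilde{\gamma}$ in $\mathbb{H}$; by completeness of $\mathbb{H}$ this lift extends to all of $\mathbb{R}$, and its projection under $\pi$ extends $\gamma$ past $T$. So $T=+\infty$, and each of the three surfaces is geodesically complete.

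The main -- and essentially the only -- subtlety is the verification of torsion-freeness, which is what guarantees that the quotient is a genuine manifold rather than an orbifold. This is a byproduct of the explicit fundamental domains constructed in the proofs of Theorems \ref{T:0.1}--\ref{T:0.3}, in which every side-pairing is realised by a hyperbolic element of $PSL(2,\mathbb{R})$ and the Schottky configuration prevents any non-trivial reduced word from fixing a point of $\mathbb{H}$. Once this has been noted the corollary reduces to the lifting property of geodesics under a Riemannian covering, which is immediate.
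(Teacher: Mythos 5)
Your argument is correct and is exactly the one the paper leaves implicit: in each of the three constructions the paper observes that the exceptional set $K$ of fixed points is empty (the isometric circles are pairwise disjoint), so $\Gamma$ acts freely and properly discontinuously by isometries, and completeness of $\mathbb{H}/\Gamma$ follows from completeness of $\mathbb{H}$ by lifting geodesics through the Riemannian covering $\pi$. No gaps; this matches the intended justification of the corollary.
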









\vspace{2mm}
The paper is organized as follows: In \textbf{section} \ref{section2} we collect the principal tools used through the paper and \textbf{section} \ref{section3} is dedicated to the proof of our main results, explicitly:

In \textbf{section} \ref{section3.1} we prove Theorem \ref{T:0.1}, defining a group $\Gamma$ from a suitable family of half circles $\mathcal{C}$. It means, the elements of $\mathcal{C}$ will be the half circle with center the integers numbers and radius one, then will define $J$, as a family of M\"{o}bius transformation having as isometric circles the elements of $\mathcal{C}$. Hence, the group $\Gamma$ will be the generated by $J$. Immediately, we shall show that $\Gamma$ is Fuchsian and prove that $\mathbb{H} / \Gamma$ is the desired non compact surface \emph{i.e.}, the Infinite Loche Ness monster.

In \textbf{section} \ref{section3.2} we prove Theorem \ref{T:0.2}, the idea to define the Fuchsian group $\Gamma$ is to make use of the geometrical construction of the Cantor set described in section \ref{section2.1}. By each step $n$ into this geometrical construction, we shall describe suitably in the hyperbolic plane two disjoint half circles $C(f_{n,k})$ and $C(f^{-1}_{n,k})$ by each closed subinterval of $I_n$. These ones will be symmetric with respect to the imaginary axis. Hence, we will define the set $\mathcal{C}_n=\{C(f_{n,k}), C(f^{-1}_{n,k}): k\in\{0,\ldots,2^{n-1}-1\}\}$ composed by half-circle mutually disjoint. Then we will calculate the explicit form of the M\"{o}bius transformation $f_{n,k}(z)$ and $f^{-1}_{n,k}(z)$, which have as isometric circles $C(f_{n,k})$ and $C(f^{-1}_{n,k})$, respectively. Hence, we will define the set $J_n=\{f_{n,k}(z), f^{-1}_{n,k}(z): k\in\{0,\ldots,2^{n-1}-1\}\}$. With this, the desired Fuschian group $\Gamma$ will be generated by $\bigcup\limits_{n\in\mathbb{N}} J_n$. Our choice of half circle drafts a simply connected region $F\subset \mathbb{H}$ having by boundary the set $\bigcup\limits_{n\in\mathbb{N}}C_n$. Then if we identify  this boundary appropriately, we hold the Cantor tree; it means the quotient set $\mathbb{H}/ \Gamma$ will be a hyperbolic surface homeomorphic to the Cantor tree.

Finally in \textbf{section} \ref{section3.3} we prove Theorem \ref{T:0.3}, first building recursively $J:=\{J_n\}_{n\in\mathbb{N}}$ a countable set of finite M\"obius transformation,  which help us in the geometrical construction of the Cantor set. Further, the elements of each family $J_n$ come their respective isometric circles. Once more, we will make use of the geometrical construction of the Cantor set described in section \ref{section2.1} to define a Fuchsian group $\Gamma$. In a similar way, as in the case of the Cantor tree and, by each step $n$ into the preceding geometrical construction we shall describe suitably in the hyperbolic plane the two disjoint half circles $C(f_{n,k})$ and $C(f^{-1}_{n,k})$ by each closed subinterval of $I_n$. These ones will be symmetric with respect to the imaginary axis.  Hence, we will define the set $\mathcal{C}_n=\{C(f_{n,k}), C(f^{-1}_{n,k}): k\in\{0,\ldots,2^{n-1}-1\}\}$ composed by half-circle mutually disjoint. These half-circle will be slightly different to the half-circles defined in the Cantor tree case. Then we will calculate the explicit form of the M\"{o}bius transformations $f_{n,k}(z)$ and $f^{-1}_{n,k}(z)$, which have as isometric circles $C(f_{n,k})$ and $C(f^{-1}_{n,k})$, respectively. Therefore, we will define $F_n=\{f_{n,k}(z),f^{-1}_{n,k}(z): k\in\{0,\ldots, 2^{n-1}-1\}\}$ the set composed by all M\"{o}bius transformations having as isometric circles the elements of $\mathcal{C}_n$. Additionally, for each $k\in \{0,\ldots , 2^{n-1}-1\}$ we will build eight appropriate sequences of half-circles closer to each of the end points of the half circles $C(f_{n,k})$ and $C(f_{n,k}^{-1})$. The radii of this half-circle converge to zero. Consequently, we will define a new countable set $F_{n,k}$ composed by M\"{o}bius transformations depending on the above sequence of half-circles. To introduce this kind of sequence we will induce infinite genus in each of the ends in our desired surface \emph{i.e.}, the blooming Cantor tree. Moreover, we will define $J_n$ as the union $F_n\cup \bigcup\limits_{k\in\{0,\ldots,2^{n-1}-1\}}F_{n,k}$ and the Fuschian group $\Gamma$ will be the group generated by the set $\bigcup\limits_{n\in\mathbb{N}}J_n$. By each step $n$ into the preceding geometrical construction we shall describe suitably in the hyperbolic plane two disjoint half circle $C(f_{n,k})$ and $C(f^{-1}_{n,k})$ by each closed subinterval of $I_n$. These ones will be symmetric with respect to the imaginary axis.  Hence, we will define the set $J_n=\{f_{n,k}(z), f^{-1}_{n,k}(z): k\in\{0,\ldots,2^{n-1}-1\}\}$. Then the desired Fuschian group $\Gamma$ will be the group generated by $\bigcup\limits_{n\in\mathbb{N}} J_n$. Our choice of half circle drafts a simply connected region $F\subset \mathbb{H}$ having by boundary the set $\bigcup\limits_{n\in\mathbb{N}}C_n$. Then if we identify  this boundary appropriately, we hold the Cantor tree; we mean the quotient set $\mathbb{H}/ \Gamma$ will be a hyperbolic surface homeomorphic to the Cantor tree.


\section{PRELIMINARIES}\label{section2}


\subsection{Geometrical construction of the Cantor set}\label{section2.1}


 We recall its geometrical construction by removing the middle third started with the closed interval $I_0:=[1,2]\subset \mathbb{R}$. We let $I_1$ be the closed subset of $I_0$ held from $I_0$ by removing its middle third $\left(1+\dfrac{1}{3},1+\dfrac{2}{3}\right)$ \emph{i.e.},
 \[
 I_{1}=\left[1, 1+\dfrac{1}{3}\right]\cup \left[1+\dfrac{2}{3},2\right].
  \]
The closed subset $I_1\subset I_0$ is the union  of two disjoint closed intervals having length $\dfrac{1}{3}$. We let $I_2$ be the closed subset of $I_0$ held from $I_1$ by removing its middle thirds $\left(1+\dfrac{1}{9},1+\dfrac{2}{9}\right)$ and $\left(1+\dfrac{7}{9},1+\dfrac{8}{9}\right)$ respectively \emph{i.e.},
\[
I_{2}=\left[1,1+\dfrac{1}{9}\right]\cup\left[1+\dfrac{2}{9},1+\dfrac{1}{3}\right]\cup\left[1+\dfrac{2}{3},1+\dfrac{7}{9}\right]\cup\left[1+\dfrac{8}{9},2\right].
\]
The closed subset $I_2\subset I_0$ is the union  of four disjoint closed intervals having length $\dfrac{1}{3^2}$. We now construct inductively  the closed subset $I_n\subset I_0$ from $I_{n-1}$ by removing its middle thirds. We note that each positive integer number $k$ can be written as the binary form
\[
k=t_0\cdot 2^0 + t_1 \cdot 2^1 +\ldots + t_i \cdot 2^i+ \ldots + t_m \cdot 2^m
\]
where $t_m=1$, $t_i\in \{0,1\}$ for all $i\in\{0,\ldots, m\}$ and any $m\in\mathbb{N}$. Thus, we define $s_j$ as following
\[
s_k:=2\cdot t_0 \cdot 3^0 +2\cdot t_1 \cdot 3^1+\ldots+ 2\cdot t_i \cdot 3^i+\ldots 2\cdot t_m\cdot 3^m.
\]
Contrary, if $k=0$ then $s_k:=0$.





\begin{theorem}\label{T:2.1}
\cite[Theorem 3.2.2]{Pra}.
For each $n\in\mathbb{N}$, we have
\[
I_n=\bigcup\limits_{k=0}^{2^n -1} \left[1+\frac{s_k}{3^n},1+ \frac{s_k +1}{3^n}\right]\subset I_0.
\]
\end{theorem}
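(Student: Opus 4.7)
The plan is to prove the formula by induction on $n$, using the observation that the binary expansions of $2k$ and $2k+1$ are just the binary expansion of $k$ shifted and prefixed with $0$ or $1$, which translates into a clean recurrence for $s_k$.

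First I would extract the arithmetic core of the argument: the two identities
\[
s_{2k}=3s_k,\qquad s_{2k+1}=3s_k+2,\qquad k\in\mathbb{N}\cup\{0\}.
\]
These follow directly from the definition: if $k=t_0+t_1\cdot 2+\cdots+t_m\cdot 2^m$, then $2k=0+t_0\cdot 2+\cdots+t_m\cdot 2^{m+1}$, so by the definition of $s_{\bullet}$,
\[
s_{2k}=2\cdot 0+2t_0\cdot 3+\cdots+2t_m\cdot 3^{m+1}=3s_k,
\]
and similarly $s_{2k+1}=2+3s_k$. As $k$ ranges over $\{0,\ldots,2^n-1\}$, the pairs $(2k,2k+1)$ exhaust $\{0,\ldots,2^{n+1}-1\}$, which is the combinatorial fact that will match the doubling of intervals in the Cantor construction.

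Next I would check the base case $n=1$ by hand: $s_0=0$ and $s_1=2$, so the formula gives $[1,1+\tfrac{1}{3}]\cup[1+\tfrac{2}{3},2]$, matching the definition of $I_1$ in the text. (If $\mathbb{N}$ includes $0$, the case $n=0$ is even simpler since $I_0=[1,2]=[1+\tfrac{s_0}{1},1+\tfrac{s_0+1}{1}]$.)

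For the inductive step, assume
\[
I_n=\bigcup_{k=0}^{2^n-1}\left[1+\tfrac{s_k}{3^n},\,1+\tfrac{s_k+1}{3^n}\right].
\]
By the recursive definition of the Cantor construction, $I_{n+1}$ is obtained from $I_n$ by deleting the open middle third of each of its constituent closed intervals. Fix one such interval $[1+\tfrac{s_k}{3^n},1+\tfrac{s_k+1}{3^n}]$; removing its open middle third leaves exactly the two closed intervals
\[
\left[1+\tfrac{3s_k}{3^{n+1}},\,1+\tfrac{3s_k+1}{3^{n+1}}\right]\quad\text{and}\quad\left[1+\tfrac{3s_k+2}{3^{n+1}},\,1+\tfrac{3s_k+3}{3^{n+1}}\right].
\]
Using the identities $s_{2k}=3s_k$ and $s_{2k+1}=3s_k+2$, these are precisely the intervals indexed by $2k$ and $2k+1$ in the claimed formula at level $n+1$. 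Taking the union over $k\in\{0,\ldots,2^n-1\}$ produces the union over all $k'\in\{0,\ldots,2^{n+1}-1\}$, completing the induction.

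The only non-routine point is verifying the two identities for $s_k$, and everything else is a direct bookkeeping match between the geometric operation (trisect and delete the middle) and the algebraic operation $k\mapsto(2k,2k+1)$ on indices; I do not anticipate a genuine obstacle.
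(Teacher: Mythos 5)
Your proof is correct and complete: the recurrence $s_{2k}=3s_k$, $s_{2k+1}=3s_k+2$ follows immediately from the binary-to-ternary definition of $s_k$, and the induction matches the trisection step exactly (note only that the paper defines $s_0:=0$ separately, for which your recurrence still holds trivially). The paper itself gives no proof of this theorem --- it is quoted from \cite[Theorem 3.2.2]{Pra} --- but your argument is the standard one and fills that gap correctly.
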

\begin{remark}\label{r:2.2}
The closed subset $I_n\subset I_0$ is the union of $2^{n}$ disjoint closed subsets intervals having length $\dfrac{1}{3^n}$. Moreover, the middle thirds removed from $I_{n-1}$ \emph{i.e.}, $\left(1+\dfrac{s_{2k}+1}{3^n}, 1+\dfrac{s_{2k+1}}{3^n}\right)$ also have length $\frac{1}{3^n}$, for each $k\in \{0,\ldots, 2^{n-1}-1\}$.
\end{remark}
Therefore, the intersection of closed subset
\[
2^{\omega}:=\bigcap_{n\in\mathbb{N} } I_n
\]
is well-known as the Cantor set, which is the only totally disconnected, perfect compact metric space (up to homeomorphism), (see \cite[Corollary 30.4]{Will}).



\subsection{Ends spaces}\label{section2.2}

We star by introducing the ends space of a topological space $X$ in the most general context, we shall employ it to clear-cut topological spaces $X$: surfaces and the graph well-known as the \emph{Cantor binary tree}. Let $X$ be a locally compact, locally connected, connected Hausdorff space.


\begin{definition}\label{d:2.3}
\cite{Fre}. Let $U_1\supset U_2\supset \cdots$ be an infinite nested sequence of non-empty connected open subsets of $X$, so that the boundary of $U_n$ in $X$ is compact for every $n\in\mathbb{N}$, $\cap_{n\in\mathbb{N}}\overline{U}$, and for any compact subset $K$ of $X$ there is $l\in\mathbb{N}$ such that $U_{l}\cap K=\emptyset$. We shall denote the sequence $U_1\supset U_2\supset \cdots$ as $(U_n)_{n\in\mathbb{N}}$. Two sequences $(U_{n})_{n\in\mathbb{N}}$ and $(U_{n}^{'})_{n\in \mathbb{N}}$ are equivalent if for any $l \in \mathbb{N}$ it exists $k \in \mathbb{N}$ such that $U_{l}\supset U_k^{'}$ and $n \in \mathbb{N}$ it exists $m \in \mathbb{N}$ such that $U_{n}'\supset U_m$.
The corresponding equivalence classes are called the \textit{topological ends} of $X$. We will denote the space of ends  by $Ends(X)$ and each equivalence class $[U_{n}]_{n\in\mathbb{N}}\in Ends(X)$ is called an \textit{end} of $X$.
\end{definition}

For every non-empty open subset $U$ of $X$ in which its boundary $\partial U$ is compact, we define:
\begin{equation}\label{eq:1}
U^{*}:=\{[U_{n}]_{n\in\mathbb{N}}\in  Ends(X)\, : \, U_{j}\subset U \text{ for some }j\in\mathbb{N}\}.
\end{equation}
The collection formed by all sets of the form $U\cup U^{*}$, with $U$ open with compact boundary of $X$, forms a base for the topology of $X':=X\cup Ends(X)$.

\begin{theorem}\label{t:2.2}
\cite[Theorem 1.5]{Ray}.
Let $X':=X\cup Ends(X)$ be the topological space defined above. Then,
\begin{enumerate}
\item The space $X'$ is Hausdorff, connected and locally connected.
\item The space $Ends(X)$ is closed and has no interior points in $X'$.
\item The space $Ends(X)$ is totally disconnected in $X'$.
\item The space $X'$ is compact.
\item If $V$ is any open connected set in $X'$, then $V\setminus Ends(X)$ is connected.
\end{enumerate}
\end{theorem}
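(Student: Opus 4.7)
The plan is to verify the five properties by systematically unpacking the topology generated by the base $\{U\cup U^{*}\}$ and exploiting local compactness, local connectedness, connectedness, and Hausdorffness of $X$. The central technical observation, which I would reuse throughout, is that whenever $W\subset X$ is relatively compact then $W^{*}=\emptyset$ (no nested sequence representing an end can persist inside a compact set, by the clause in Definition \ref{d:2.3} requiring $U_l\cap K=\emptyset$ for some $l$), and conversely every basic neighborhood $U\cup U^{*}$ of an end $[U_n]_{n\in\N}$ contains a tail $U_l\subset U$.

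For (1), Hausdorffness splits into three cases. Two points of $X$ are separated inside $X$ by precompact open sets with compact boundary, using local compactness. A point $x\in X$ and an end $e=[U_n]_{n\in\N}$ are separated by choosing a precompact neighborhood $W$ of $x$ and invoking Definition \ref{d:2.3} to find $l$ with $U_l\cap\overline{W}=\emptyset$, so $W$ and $U_l\cup U_l^{*}$ work. Two distinct ends are separated because non-equivalence delivers indices with $U_l$ and $U_m'$ eventually disjoint. Connectedness of $X'$ follows from connectedness of $X$ together with density of $X$ in $X'$; local connectedness comes from shrinking a basic neighborhood $U\cup U^{*}$ to one with $U$ connected and observing that every end in $U^{*}$ is a limit of points of the connected set $U$. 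For (2), $X'\setminus Ends(X)=X$ is open because each $x\in X$ has a precompact neighborhood $W$ with $W^{*}=\emptyset$, while no basic open $U\cup U^{*}$ with $U$ nonempty can lie entirely in $Ends(X)$, so $Ends(X)$ has empty interior. For (3), two distinct ends $e,e'$ are isolated inside disjoint basic sets $U_l\cup U_l^{*}$ and $U_m'\cup(U_m')^{*}$, yielding a clopen separation in $Ends(X)$.

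For (5), if $V\subset X'$ is open and connected and $V\setminus Ends(X)=A\sqcup B$ is a disconnection by open subsets of $X$, then each end $e\in V\cap Ends(X)$ has some tail $U_n\subset V$, and connectedness of $U_n$ in $X$ forces $U_n\subset A$ or $U_n\subset B$; this partitions the ends of $V$ consistently so that $V=(A\cup A^{*})\sqcup(B\cup B^{*})$ disconnects $V$, contradicting its connectedness. The main obstacle is (4), compactness of $X'$. My approach is via ultrafilters: for an ultrafilter $\mathcal{U}$ on $X'$, either $\mathcal{U}$ contains a relatively compact subset of $X$, in which case it converges inside $X$, or $\mathcal{U}$ eventually avoids every compact set. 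Fixing an exhaustion $K_1\subset\mathrm{int}(K_2)\subset K_2\subset\cdots$ with $\bigcup_n K_n=X$, in the second case I would select, for each $n$, the unique connected component $U_n$ of $X\setminus K_n$ belonging to $\mathcal{U}$ (uniqueness by ultrafilter maximality and pairwise disjointness of components), obtaining a nested sequence with compact boundaries whose equivalence class is an end $e$ to which $\mathcal{U}$ converges. The delicate step, which I would spend the most effort on, is verifying that the resulting sequence $(U_n)$ satisfies the empty-intersection and escape-from-compacts conditions of Definition \ref{d:2.3}, and matching the ultrafilter-convergence to convergence in the topology of $X'$.
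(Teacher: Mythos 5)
First, note that the paper does not prove this statement at all: it is quoted verbatim from Raymond \cite[Theorem 1.5]{Ray}, so there is no in-paper argument to compare yours against. Judged on its own merits, your outline for (1), (2) and (5) is the standard one and is essentially sound (the separation of two distinct ends does require the small auxiliary argument that a connected tail $U_k'$ disjoint from the compact boundary $\partial U_l$ lies entirely inside $U_l$ or entirely outside $\overline{U_l}$, but you clearly have the right ingredients). Two of your steps, however, have genuine gaps. In (3), producing disjoint basic neighborhoods $U_l\cup U_l^{*}$ and $U_m'\cup(U_m')^{*}$ of two distinct ends only re-proves Hausdorffness; total disconnectedness needs the stronger fact that $U^{*}$ is \emph{closed}, not merely open, in $Ends(X)$. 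That is where the real work is: given an end $e'\notin U^{*}$, compactness of $\partial U$ lets you find a tail of $e'$ missing $\partial U$, and connectedness of that tail forces it into $X\setminus\overline{U}$, so $(U^{*},\,Ends(X)\setminus U^{*})$ is the clopen partition you want. Without this, "clopen separation" is asserted but not obtained.

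In (4) the step "select the unique connected component $U_n$ of $X\setminus K_n$ belonging to $\mathcal{U}$, uniqueness by ultrafilter maximality" is false as stated: an ultrafilter containing an infinite disjoint union need not contain any single piece (a nonprincipal ultrafilter on $\mathbb{N}=\bigsqcup_n\{n\}$ contains no singleton), and $X\setminus K_n$ can have infinitely many components. You must first prove the finiteness lemma: for $K$ compact in a connected, locally connected, locally compact Hausdorff space, only finitely many components of $X\setminus K$ have non-compact closure, and the union of the remaining components is relatively compact (both follow by covering $\partial L$, for $L$ a compact neighborhood of $K$, by finitely many connected open sets in $X\setminus K$). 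Only then does $X'\setminus K_n$ decompose into finitely many sets --- the sets $V_i\cup V_i^{*}$ for the finitely many unbounded components plus one relatively compact remainder that $\mathcal{U}$ cannot contain --- and maximality of the ultrafilter applies. You should also flag that the exhaustion $K_1\subset\mathrm{int}(K_2)\subset\cdots$ with every compact set absorbed by some $K_n$ presupposes $\sigma$-compactness, which is not among the hypotheses listed in the paper (it does hold for the surfaces to which the theorem is applied, and in Raymond's setting).
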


\textbf{Ends of surfaces.} When $X$ is a surface $S,$ the space $Ends(S)$ carries extra information, namely, those ends that carry \emph{infinite} genus. This data, together with the space of ends and the orientability class, determines the topology of $S$. The details of this fact are discussed in the following paragraphs. Given that, this article only deals with orientable surfaces; from now on, we dismiss the non-orientable case. 

A surface is said to be \textit{planar} if all of its compact subsurfaces are of genus zero. An end $[U_n]_{n\in\mathbb{N}}$ is called \textit{planar} if there is $l\in\mathbb{N}$ such that $U_l$ is planar. The \textit{genus} of a surface $S$ is the maximum of the genera of its compact subsurfaces. Remark that, if a surface $S$ has \textit{infinite genus}, there is no finite set $\rm \mathcal{C}$ of mutually non-intersecting simple closed curves with the property that $S\setminus \mathcal{C}$ is  \textit{connected and planar}. We define $Ends_{\infty}(S)\subset Ends(S)$ as the set of all ends of $S$ which are not planar. It comes from the definition that $Ends_{\infty}(S)$ forms a closed subspace of $Ends(S)$.

\begin{theorem}[Classification of non-compact and orientable surfaces, \cite{Ker}, \cite{Ian}]\label{t:2.5}
Two non-compact and orientable surfaces $S$ and $S'$  having the same genus are homeomorphic if and only if there is a homeomorphism $f: Ends(S)\to Ends(S^{'})$ such that $f( Ends_{\infty}(S))= Ends_{\infty}(S')$.
\end{theorem}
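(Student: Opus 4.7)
The \emph{only if} direction is straightforward: any homeomorphism $h : S \to S'$ carries a defining nested sequence $(U_n)_{n\in\mathbb{N}}$ of an end of $S$ to a defining nested sequence $(h(U_n))_{n\in\mathbb{N}}$ of an end of $S'$, and since the three conditions in Definition \ref{d:2.3} (connectedness, compact boundary, eventual exit from compacts) are topological, this descends to a homeomorphism $\widetilde{h}: \mathrm{Ends}(S)\to \mathrm{Ends}(S')$. The property of having a planar neighborhood is also topological, so $\widetilde{h}$ restricts to a homeomorphism $\mathrm{Ends}_\infty(S)\to \mathrm{Ends}_\infty(S')$, and the genera of $S$ and $S'$ agree by the same invariance.

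For the substantive \emph{if} direction, I would follow the classical Ker\'ekj\'art\'o--Richards exhaustion strategy. First I would fix exhaustions $S=\bigcup_n P_n$ and $S'=\bigcup_n P'_n$ by compact, connected bordered subsurfaces with $P_n\subset \mathrm{int}(P_{n+1})$ such that every component of $S\setminus P_n$ has compact boundary and contains at least one end (and similarly for $S'$). Each component $C$ of $S\setminus P_n$ then determines a clopen subset $C^*\subset \mathrm{Ends}(S)$ as in $(\ref{eq:1})$, and these subsets give a finite clopen partition of $\mathrm{Ends}(S)$ that refines as $n$ grows.

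Next I would use $f:\mathrm{Ends}(S)\to\mathrm{Ends}(S')$ together with the total disconnectedness of $\mathrm{Ends}(S')$ to interleave and refine the two exhaustions so that at each level $n$ the partition induced on $\mathrm{Ends}(S')$ is precisely the image under $f$ of the partition coming from $S$, and so that matched components $C\leftrightarrow C'$ agree in (i) number of boundary circles, (ii) finite genus of the compact piece $P_n$, and (iii) whether they contain ends of infinite genus (matched via $f|_{\mathrm{Ends}_\infty}$). Once the exhaustions are matched, the classification of compact bordered orientable surfaces (by genus and number of boundary components) yields a homeomorphism $h_n:P_n\to P'_n$, which one then extends inductively to $h_{n+1}$ by choosing compatible homeomorphisms on each annular cobordism $P_{n+1}\setminus \mathrm{int}(P_n)\to P'_{n+1}\setminus \mathrm{int}(P'_n)$. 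Passing to the direct limit yields a homeomorphism $h:S\to S'$ which extends continuously to $S\cup \mathrm{Ends}(S)\to S'\cup \mathrm{Ends}(S')$ and realizes $f$ on the ends.

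The main obstacle is the exhaustion-matching step: one must simultaneously control the clopen partitions on both sides, the finite genus of each new piece, the labelling of which components carry ends in $\mathrm{Ends}_\infty$, and the consistency of these refinements as $n\to \infty$. This is done by a Schr\"oder--Bernstein-style back-and-forth, alternately refining the $P_n$ and the $P'_n$ to catch up with each other under $f$, and using compactness of $\mathrm{Ends}(S)$ to guarantee that finitely many refinements suffice at each level. Verifying that this process produces a coherent tower of matched pairs $(P_n,P'_n)$ is the technical core; everything afterwards is a routine gluing argument using the compact-surface classification.
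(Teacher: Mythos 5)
The paper does not prove this statement: it is quoted as the classical Ker\'ekj\'art\'o--Richards classification theorem, with references \cite{Ker} and \cite{Ian}, and is used as a black box in Section 3. Your outline is therefore not competing with an argument in the paper but with Richards' original proof, and at the level of strategy it reproduces that proof faithfully: the easy direction by topological invariance of the data defining $Ends(S)$ and $Ends_{\infty}(S)$, and the converse by matched exhaustions, clopen partitions of the ends spaces, and gluing via the classification of compact bordered orientable surfaces.

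As a proof, however, the proposal stops exactly where the real work begins, and one step as described would not go through without an ingredient you have not named. You require matched components $C\leftrightarrow C'$ to ``agree in finite genus of the compact piece,'' but give no reason this can be arranged. The lemma that makes it possible (Richards' key reduction) is that an end in $Ends_{\infty}(S)$ absorbs unboundedly much genus, so genus can always be pushed outward into any component whose clopen set of ends meets $Ends_{\infty}(S)$, whereas a component all of whose ends are planar has finite total genus that must be fully accounted for in the compact part; it is here, and only here, that the hypotheses that $S$ and $S'$ have the same genus and that $f(Ends_{\infty}(S))=Ends_{\infty}(S')$ actually enter the converse direction. Without this lemma the back-and-forth refinement has no mechanism for equalizing genus and need not terminate coherently at each level. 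Relatedly, the pieces $P_{n+1}\setminus \mathrm{int}(P_n)$ are not annular cobordisms in general: they are compact bordered surfaces with several boundary circles and possibly positive genus, and extending $h_n$ to $h_{n+1}$ requires matching both the genus and the boundary pattern of each such piece --- which is precisely what the deferred matching step must deliver. So the proposal is a correct road map of the standard argument, but its technical core is identified rather than supplied.
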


\begin{proposition}\label{p:2.6}
\cite[Proposition 3]{Ian}. The space of ends of a connected surface $S$ is totally disconnected, compact, and Hausdorff. In particular, $Ends(S)$ is homeomorphic to a closed subspace of the Cantor set.
\end{proposition}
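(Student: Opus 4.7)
The plan is to deduce the statement directly from Theorem \ref{t:2.2}, which already supplies the hard topological work in the general setting, and then to pass from a compact totally disconnected metrizable space to a closed subspace of the Cantor set by a classical argument.

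First I would verify that any connected surface $S$ satisfies the hypotheses of Theorem \ref{t:2.2}: a two-dimensional topological manifold is automatically Hausdorff, locally compact and locally connected, and $S$ is connected by assumption. Applying parts (1)--(4) of that theorem, the enlarged space $S':=S\cup Ends(S)$ is compact and Hausdorff, and $Ends(S)$ sits inside $S'$ as a closed, totally disconnected subspace. As a closed subspace of a compact Hausdorff space, $Ends(S)$ is itself compact and Hausdorff, and total disconnectedness is inherited by subspaces; this already yields the three properties asserted in the first sentence of the proposition.

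For the ``in particular'' clause I would use the standing assumption that a surface is second countable. Starting from a countable basis of $S$ by relatively compact open sets, together with a compact exhaustion $S=\bigcup_{n}K_{n}$, one extracts a countable family of connected open sets $U$ with compact boundary, and the associated sets $U\cup U^{*}$ defined in \eqref{eq:1} then form a countable basis for $S'$. Restricting this basis to $Ends(S)$ shows that the end space is second countable. Urysohn's metrization theorem then makes $Ends(S)$ a compact metrizable totally disconnected space, and the classical characterisation of $2^{\omega}$ recalled in Section \ref{section2.1} via \cite[Corollary 30.4]{Will}, combined with the standard fact that any compact totally disconnected metrizable space embeds in $2^{\omega}$, yields the desired closed embedding.

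The routine piece is the first paragraph, essentially an invocation of Theorem \ref{t:2.2}. The only technical nuisance I expect is turning the general basis of $S'$ described in Section \ref{section2.2} into a countable one and making precise the way second countability of $S$ transfers to $S'$; once that is in hand, the embedding into the Cantor set follows from standard general topology.
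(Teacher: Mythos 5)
Your argument is correct, but note that the paper does not actually prove this proposition: it is quoted verbatim from Richards \cite{Ian}, so there is no internal proof to match. What you have written is a legitimate self-contained derivation that assembles the statement from ingredients the paper already has on hand. The first half is exactly right: a connected surface satisfies the standing hypotheses of Theorem \ref{t:2.2} (Hausdorff, locally compact, locally connected, connected), and parts (1)--(4) of that theorem hand you compactness and Hausdorffness of $S'$ together with closedness and total disconnectedness of $Ends(S)$ in $S'$, from which the three adjectives in the first sentence follow by passing to the closed subspace. For the embedding clause, two points deserve to be made explicit. First, second countability of $S$ is genuinely indispensable (without it the end space need not even be metrizable), and while it is the universal convention in the classification literature the paper never states it; your countable basis for $S'$ is most cleanly obtained by fixing a compact exhaustion $K_1\subset K_2\subset\cdots$ and observing that the non--relatively-compact components $U$ of the sets $S\setminus K_n$ form a countable family whose associated sets $U^{*}$ form a basis of $Ends(S)$. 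Second, the final step should be the standard clopen-basis embedding: a compact Hausdorff totally disconnected space is zero-dimensional, so a countable basis of clopen sets $\{B_n\}$ gives an embedding $x\mapsto(\chi_{B_n}(x))_{n}$ into $2^{\omega}$ whose image is closed by compactness; the citation \cite[Corollary 30.4]{Will} by itself only characterizes the Cantor set and does not supply the embedding. Richards' original proof is more hands-on, coding ends directly by nested boundary components; your route through Raymond's compactification theorem is shorter given what Section \ref{section2.2} already imports, at the cost of leaning on the unstated second countability convention.
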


Of huge zoo composed by all non-compact surfaces our interest points to three of them. The first one is that surface which has infinite genus and only one end. It is called \emph{the Loch Ness monster} (see Figure \ref{loch}). This nomenclature is due to \emph{Phillips, A.} and \emph{Sullivan, D.} \cite{PSul}.
Remark that a surface $S$ has only one end if and only if for all compact subset $K \subset S$ there is a compact $K^{'}\subset S$ such as $K\subset K^{'}$ and $S\setminus  K^{'}$ is connected (see \cite{SPE}).

The other two remaining surfaces $S$ and $S^{'}$ are those that have ends space homeomorphic to the Canto tree. Further, all ends of $S$ are planar, while the ends of $S^{'}$ are all not planar. This surfaces are well-known as the \emph{Cantor tree} (see Figure \ref{cantor}- a.) and the \emph{Blooming Cantor tree} (see Figure \ref{cantor}- b.), respectively (see \cite{Ghys}).

\textbf{Cantor binary tree.} One of the fundamental objects we use in the proof of the theorems \ref{T:0.2} and \ref{T:0.3} is the infinite 3-regular tree. The ends spaces of this graph plays a distinguished role for the topological proof of our surfaces. We give binary coordinates to the vertex set of the infinite 3-regular tree, for we use these in a systematic way during the proofs of our main results.

For every $n\in\mathbb{N}$ let $2^{n}:=\prod_{i=1}^n\{0,1\}_i$ and let $\pi_i:2^n \to \{0,1\}$ be the projection onto the $i$-th coordinate. We define $V:=\{ D_s: D_s\in 2^n \text{ for some } n\in \mathbb{N}\}$ and $E$ as the union of $((0),(1))$ with the set $\{ (D_s,D_t) : D_s\in 2^{n}$ and $D_t\in 2^{n+1}$ for some $n\in\mathbb{N}$, and $\pi_i(D_s) =\pi_i(D_t)$  for every $i\in\{1,...,n\}\}$. The infinite 3-regular tree with vertex set $V$ and edge set $E$ will be called the \emph{Cantor binary tree} and denoted by $T2^\omega$, (see Figure \ref{Figure3}).
\begin{figure}[h!]
 \centering
 \includegraphics[scale=0.5]{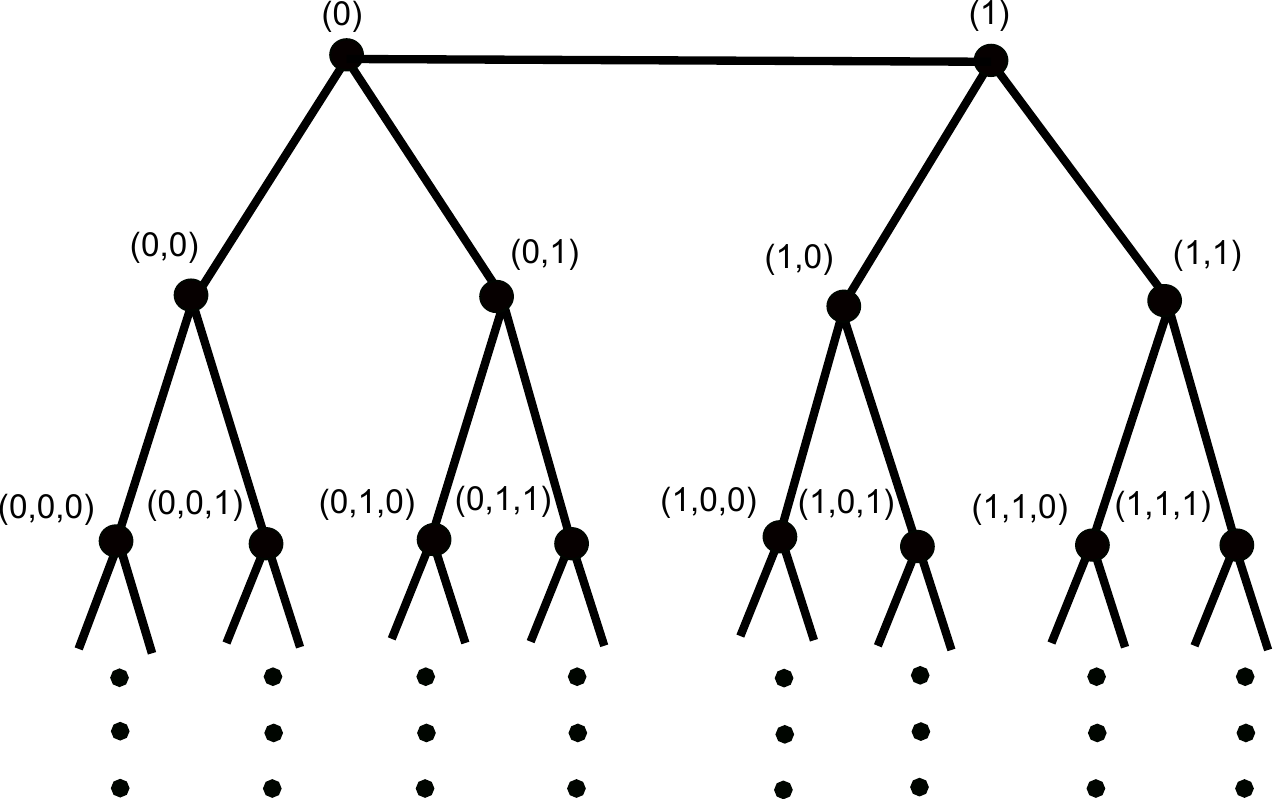}\\
 \caption{\emph{Cantor binary tree  $T2^{\omega}$.}}
 \label{Figure3}
\end{figure}

\begin{remark}\label{r:2.7}
Let $(v_n)_{n\in\mathbb{N}}$, where $v_n\in 2^\omega$ be an infinite simple path in $T2^\omega$. If we define $V_n$ as the connected component of $T2^{\omega}\setminus\{v_{n}\}$ such that $v_{n+1}\in V_n$, then $[V_n]\in Ends(T2^\omega)$ is completely determined by $(v_n)_{n\in\mathbb{N}}$. Hence, if we endow $\{0,1\}$ and $2^\omega:=\prod\limits_{i\in\mathbb{N}} \{0,1\}_{i} $ with the discrete and product topologies respectively, the map:
\begin{equation}\label{eq:3}
f :  \prod\limits_{i\in\mathbb{N}} \{0,1\}_{i}  \to   Ends(T2^{\omega}), \quad (x_{n})_{n\in\mathbb{N}}\mapsto ( v_n:=(x_1,\ldots,x_{n}))_{n\in\mathbb{N}},
\end{equation}
is a homeomorphism between the standard binary Cantor set and the space of ends of $T2^\omega$.
\end{remark}
Note that each end $[V_n]_{n\in \mathbb{N}}\in Ends(T2^{\omega})$ is determined by an infinite path $(v_n)_{n\in \mathbb{N}}\subset T2^{\omega}$ such that $v_n\in 2^{n}$ for every $n\in \mathbb{N}$ and viceversa.

\begin{remark}
\label{r:2.8}
Sometimes we will abuse of notation to denote by $f((x_n)_{n\in\mathbb{N}})$	both: the end defined by the infinite path $( v_n:=(x_1,\ldots,x_{n}))_{n\in\mathbb{N}}$ and the infinite path itself in $T2^\omega$.
\end{remark}


\subsection{Hyperbolic plane}\label{section2.3}
 
 Let $\mathbb{C}$ be the complex plane. Namely the upper half-plane $\mathbb{H}:=\{z\in\mathbb{C}: Im(z)>0\}$ equipped with the riemannian metric $ds=\frac{\sqrt{dx^2+dy^2}}{y}$ is well known as either the \emph{hyperbolic} or \emph{Lobachevski} plane. It comes with a group of transformation called the \emph{isometries} of $\mathbb{H}$ denoted by $Isom(\mathbb{H})$, which preserves the  hyperbolic distance on $\mathbb{H}$ defined by $ds$. Strictly, the group $PSL(2, \mathbb{R})$ is a subgroup $Isom(\mathbb{H})$ of index 2, where $PSL(2,\mathbb{R})$ is composed by all \emph{fractional linear transformations} or \emph{M\"{o}bius transformations}
\begin{equation}\label{eq:4}
f:\mathbb{C} \to \mathbb{C}, \quad z \mapsto \dfrac{az+b}{cz+d},
\end{equation}
where $a,b,c$ and $b$ are real numbers satisfying $ad-bc=1$. The group $PSL(2,\mathbb{R})$  could also be thought as the set of all real matrices having determinant one. Throughout this paper, unless specified in a different way, we shall always write the elements of $PSL(2,\mathbb{R})$ as \emph{M\"{o}bius transformations}.



\vspace{2mm}
\textbf{Half-circles.} Remind that the \emph{hyperbolic geodesics} of $\mathbb{H}$ are the half-circles and straight lines orthogonal to the real line $\mathbb{R}:=\{z\in \mathbb{C}: Im(z)=0\}$. Given a half-circle $C$ which center and radius are $\alpha\in\mathbb{R}$ and $r>0$ respectively, then the
set $\check{C}(f):=\{z\in\mathbb{H}: |z-\alpha|<r\}$ is called the \emph{inside} of $C$. Contrary, the set $\hat{C}(f):=\{z\in\mathbb{H}: |z-\alpha|>r\}$ is called the \emph{outside} of $C$. See the Figure \ref{Figure4}.

 \begin{figure}[h!]
\begin{center}
\includegraphics[scale=0.45]{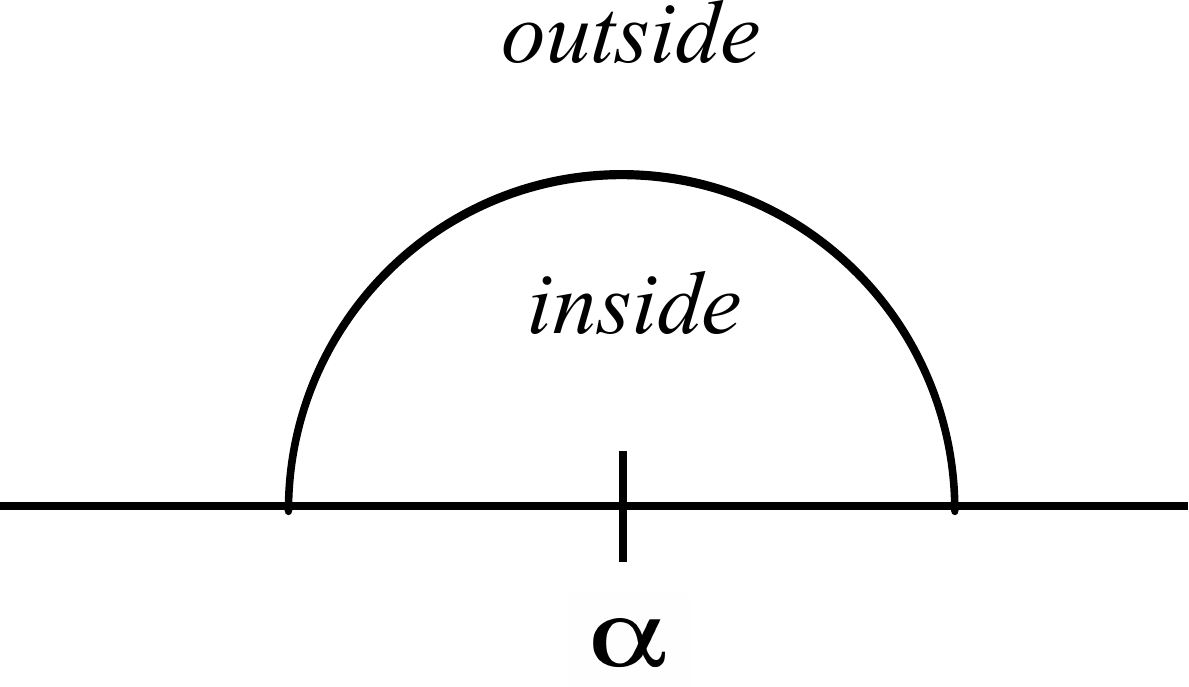}\\
  \caption{\emph{The inside and the outside of the half-circle $C$.}}
   \label{Figure4}
\end{center}
\end{figure}


Given the M\"{o}bius transformation $f\in PSL(2,\mathbb{R})$ as in equation (\ref{eq:4}) with $c\neq 0$, 
then the half-circle
\begin{equation}
\label{eq:5}
C(f):=\{z\in\mathbb{H}:| cz + d |^{-2}=1\}
\end{equation}
will be called the \emph{isometric circle of} $f$ (see \emph{e.g.}, \cite[p. 9]{MB}). We note that $\dfrac{-d}{c}\in\mathbb{R}$ the center of $C(f)$ is mapped by $f$ onto the infinity point  $\infty$. Further, $f$ sends the half-circle $C(f)$ onto $C(f^{-1})$ the isometric circle of the M\"{o}bius transformation $f^{-1}$, as such:
\begin{equation}
\label{eq:6}
C(f^{-1})=\{z\in\mathbb{H}:| -cz + a |^{-2}=1\}.
\end{equation}

%

\begin{remark}
\label{r:2.9}
The isometric circles $C(f)$ and $C^{-1}(f)$ have the same radius $r= | c |^{-1}$, and their respective centers are $\alpha=\dfrac{-d}{c}$ and $\alpha^{-1}=\dfrac{a}{c}$. 
\end{remark}

Given the half-circle $C$, then the M\"{o}bius transformation $f_C\neq Id\in PSL(2,\mathbb{R})$ reflecting with $C$ as the set of fixed points will be called the \emph{reflection respect to} $C$. We note that $f_C$ exchanges the ends points of $C$ and maps $\check{C}$ the inside of $C$ (respectively, $\hat{C}$ the outside of $C$) onto $\hat{C}$ the outside of $C$ (respectively, $\check{C}$ the inside of $C$). If $\alpha\in\mathbb{R}$ and $r>0$ are the center and the radius of $C$ then the relection $f_{C} : \mathbb{H}  \to  \mathbb{H}$ is defined as follows
\begin{equation}
\label{eq:8}
z \mapsto \dfrac{\dfrac{\alpha }{r}z +\left(\dfrac{- \alpha^2}{r} - r\right)}{\dfrac{z}{r}-\dfrac{\alpha}{r}}.
\end{equation}



\begin{remark}
 \label{r:2.10}
We let $L_{\alpha-2r}$ and $L_{\alpha+2r}$ be the two straight,  orthogonal lines  to the real axis $\mathbb{R}$ through the points $\alpha-2r$ and $\alpha+2r$, respectively. Then the reflection $f_C$ sends $L_{\alpha-2r}$ (analogously, $L_{\alpha+2r}$) onto the half-circle whose ends points are $\alpha+\dfrac{r}{2}$ and $\alpha$ (respectively, $\alpha-\dfrac{r}{2}$ and $\alpha$). See the Figure \ref{Figure5}.
\begin{figure}[h!]
\begin{center}
\includegraphics[scale=0.5]{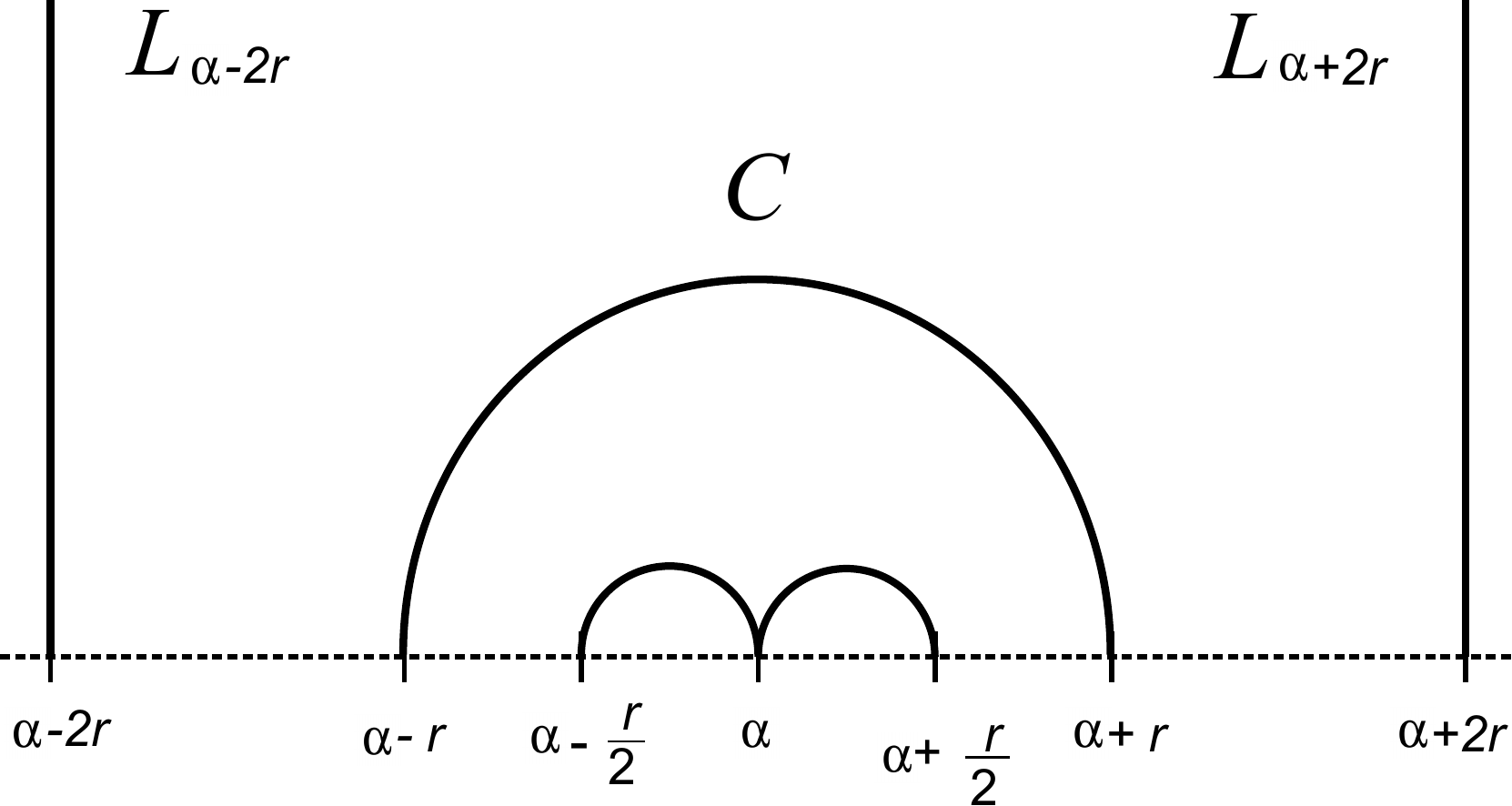}\\
  \caption{\emph{Reflection  respect to the half-circle $C$.}}
   \label{Figure5}
\end{center}
\end{figure}
Given that $f_C$ is an element of $PSL(2,\mathbb{R})$, then for every $\epsilon < \dfrac{r}{2}$ the closed hyperbolic $\epsilon$-neighborhood of the half-circle $C$ does not intersect any of the hyperbolic geodesics $L_{\alpha-2r}$, $f_{C}(L_{\alpha-2r})$, $L_{\alpha+2r}$, and $f_{C}(L_{\alpha+2r})$.
\end{remark}

\begin{lemma}\label{l:2.11}
 Let  $C_1$ and $C_2$ be two disjoint half-circles having centers and radius $\alpha_1, \alpha_2 \in\mathbb{R}$, and $r_1, r_2 > 0$, respectively. Suppose that the complex norm $|\alpha_1 -\alpha_2 |> (r_1 + r_2)$ then for every $\epsilon<\dfrac{\max\{r_1, r_2\}}{2}$ the closed hyperbolic $\epsilon$-neighborhoods of the half-circles $C_1$ and $C_{2}$ are disjoint.
\end{lemma}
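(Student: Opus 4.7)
The plan is to apply Remark~\ref{r:2.10} to the half-circle of larger radius, combined with a conformal normalization to reduce to an explicit Euclidean estimate.

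Assume without loss of generality that $r_1 \geq r_2$, so that $\epsilon < r_1/2 = \max\{r_1,r_2\}/2$. By Remark~\ref{r:2.10} applied to $C_1$, the closed hyperbolic $\epsilon$-neighborhood of $C_1$ does not meet any of the four geodesics $L_{\alpha_1-2r_1}$, $L_{\alpha_1+2r_1}$, $f_{C_1}(L_{\alpha_1-2r_1})$, $f_{C_1}(L_{\alpha_1+2r_1})$. By connectedness, and since $C_1$ lies inside the Euclidean strip $\{z\in\mathbb{H} : |\mathrm{Re}(z)-\alpha_1| < r_1\}$, this confines the $\epsilon$-neighborhood of $C_1$ to the component $R_1$ of the complement of these four geodesics that contains $C_1$; in particular $R_1 \subset \{z\in\mathbb{H} : |\mathrm{Re}(z)-\alpha_1| < 2r_1\}$.

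For $C_2$, one cannot invoke Remark~\ref{r:2.10} directly, since $\epsilon$ need not be smaller than $r_2/2$. My plan is then to normalize: apply a M\"{o}bius isometry $\phi \in PSL(2,\mathbb{R})$ sending $C_1$ to the positive imaginary axis. Under $\phi$, the closed $\epsilon$-neighborhood of $C_1$ becomes the Euclidean cone $\{w\in\mathbb{H} : |\mathrm{Re}(w)| \leq \mathrm{Im}(w)\sinh\epsilon\}$ (using the explicit formula for hyperbolic distance to a vertical geodesic), while $\phi(C_2)$ is a half-circle with computable endpoints on the positive real axis. The proof reduces to showing that the closed $\epsilon$-neighborhood of $\phi(C_2)$ lies entirely on one side of the cone's boundary. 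This becomes a concrete Euclidean inequality in which the hypothesis $|\alpha_1-\alpha_2| > r_1 + r_2$ and the bound $\epsilon < r_1/2$ can be combined directly, via the fact that $\phi$ maps the real-axis separation of $C_1$ and $C_2$ to a separation of $\phi(C_2)$ from the origin.

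The main obstacle is the asymmetric role of the two radii: Remark~\ref{r:2.10} yields clean information only about $C_1$, not $C_2$. The conformal normalization bridges this asymmetry by converting the one-sided information about $C_1$ into a global region in $\mathbb{H}$ that can then be compared against the smaller geodesic $C_2$ by a direct estimate. I expect that closing the final Euclidean inequality is where care is required: one may need to apply Remark~\ref{r:2.10} a second time, after reflecting across $C_1$ via $f_{C_1}$, to fully exploit the lens-shape of $R_1$ before the hypothesis $|\alpha_1-\alpha_2| > r_1+r_2$ can be converted into disjointness of the two $\epsilon$-neighborhoods.
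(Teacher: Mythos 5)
Your first paragraph is sound and coincides with the first half of the paper's argument: Remark \ref{r:2.10} plus connectedness confines the closed hyperbolic $\epsilon$-neighborhood $N_\epsilon(C_1)$ of the larger circle to the strip $S_1=\{z\in\mathbb{H}:\alpha_1-2r_1<Re(z)<\alpha_1+2r_1\}$, using only $\epsilon<r_1/2$. The genuine gap is in the second half, which is where all the content of the lemma lies and which you correctly identify as the crux: nothing in your outline actually bounds $N_\epsilon(\phi(C_2))$. Knowing the endpoints of $\phi(C_2)$ and their Euclidean separation from the origin does not control its $\epsilon$-neighborhood: for fixed endpoints, $N_\epsilon(\phi(C_2))$ is the lens bounded by the two equidistant arcs through those endpoints, and as $\epsilon$ grows these arcs open up and sweep past any fixed cone or vertical strip; so without a quantitative link between $\epsilon$ and the geometry of $C_2$ (precisely what is missing, since $\epsilon<r_2/2$ is not available), the announced ``concrete Euclidean inequality'' cannot be closed. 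Equivalently, disjointness of the two closed $\epsilon$-neighborhoods amounts to a lower bound on the hyperbolic distance between the geodesics by $2\epsilon$, and your proposal produces no such bound. The fallback you suggest, applying Remark \ref{r:2.10} a second time after reflecting in $C_1$, gives nothing: $f_{C_1}$ preserves $C_1$ and hence $N_\epsilon(C_1)$, and carries no information about $N_\epsilon(C_2)$.

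The idea your proposal is missing, and which the paper uses to bridge exactly the asymmetry you flagged, is a different isometry: not a normalization of $C_1$ to the imaginary axis, but the affine map of equation (\ref{eq:9}), $z\mapsto\bigl(r_2 z+(r_1\alpha_2-r_2\alpha_1)\bigr)/r_1$, which carries $C_1$ onto $C_2$ and the strip $S_1$ onto $S_2=\{z\in\mathbb{H}:\alpha_2-2r_2<Re(z)<\alpha_2+2r_2\}$. Since elements of $PSL(2,\mathbb{R})$ preserve hyperbolic distance, they commute with forming hyperbolic $\epsilon$-neighborhoods, so the containment $N_\epsilon(C_1)\subset S_1$ is transported verbatim to $N_\epsilon(C_2)\subset S_2$ \emph{for the same} $\epsilon$; no hypothesis of the form $\epsilon<r_2/2$ is ever needed, and the proof is finished by the disjointness of the strips $S_1$ and $S_2$. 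Without this (or some equivalent way of converting the bound $\epsilon<r_1/2$ into control of the neighborhood of the smaller circle), your argument is incomplete.
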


\begin{proof}



By hypothesis $|\alpha_1 -\alpha_2 |> (r_1 + r_2)$, then the open strips $S_1$ and $S_2$ are disjoint, where
\[
S_{1}:=\{z\in\mathbb{H}: \alpha_1-2r_1 < Re(z)<\alpha_1+2r_1 \} \quad \text{and} \quad S_2:=\{z\in\mathbb{H}: \alpha_{2}-2r_{2} < Re(z)<\alpha_{2}+2r_{2}\}.
\]
We remark that the half-circle $C_i$ belongs to the open strip $S_i$, for every $i\in\{1, 2\}$. Further, the transformation
\begin{equation}\label{eq:9}
f : \mathbb{H}  \to \mathbb{H}, \quad z \mapsto \dfrac{\dfrac{r_2z}{\sqrt{r_1 r_2}} +\dfrac{(r_1C_2-r_2C_1)}{\sqrt{r_1 r_2}}}{\dfrac{r_1}{\sqrt{r_1 r_2}}}
\end{equation}
of $PSL(2,\mathbb{R})$ sends the open strip $S_1$ onto the open strip $S_2$ and vice versa. We must suppose without generality that $r_1=\max\{r_1, r_2\}$. Then, using the remark \ref{r:2.10} above, we have that for every $\epsilon <\frac{r_1}{2}$ the closed hyperbolic $\epsilon$-neighborhood of the half-circle $C_1$ is contained in the open strip $S_1$. Now, given that $f$ is an element of $PSL(2,\mathbb{R})$, then the closed hyperbolic $\epsilon$-neighborhood of the half-circle $C_2$ is contained in the open strip $S_2$. Since $S_1 \cap S_2 =\emptyset$ that implies that for every $\epsilon<\dfrac{\max\{r_1, r_2\}}{2}$ the closed hyperbolic $\epsilon$-neighborhoods of the half-circles $C_1$ and $C_{2}$ are disjoint.
\end{proof}




\subsection{Fuchsian groups and Fundamental region}\label{section2.4}
The group $PSL(2,\mathbb{R})$ comes with a topological structure from the quotient space between a group composed by all the real matrices $g:=\left(
                                                                                                       \begin{array}{cc}
                                                                                                         a & b \\
                                                                                                         c & d \\
                                                                                                       \end{array}
                                                                                                     \right)
$ with determinant exactly $det(g)=1$ under $\{ \pm Id\}$. A subgroup $\Gamma$ of $PSL(2,\mathbb{R})$ is called \emph{Fuchsian} if $\Gamma$ is discrete. We shall denote as $PSL(2,\mathbb{Z})$ the Fuchsian group of all M\"obius transformation with entries in the integers numbers.

\begin{definition}\label{d:2.12}
\cite{KS} Given a Fuchsian group $\Gamma < PSL(2,\mathbb{R})$. A closed region $R$ of the hyperbolic plane $\mathbb{H}$ is said to be \emph{a fundamental region} for $\Gamma$ if it satisfies the following facts:

\textbf{(i)} The union $\bigcup\limits_{f\in \Gamma} f(R)=\mathbb{H}$.

\textbf{(ii)} The intersection of the interior sets $Int(R) \cap f(Int (R)) = \emptyset$ for each $f\in \Gamma \setminus \{Id\}$.

\noindent The different set $\partial R= R \setminus Int(R)$ is called the \emph{boundary} of $R$ and the family $\mathfrak{T}:=\{f(R): f\in \Gamma\}$ is called the \emph{tessellation} of $\mathbb{H}$.
\end{definition}

 If $\Gamma$ is a Fuchsian group and each one of its elements are described as the equation \ref{eq:4} such that $c\neq 0$, then the subset $R_0$ of $\mathbb{H}$ defined as follows

\begin{equation}
\label{eq:10}
R_0 :=\bigcap\limits_{f\in \Gamma} \overline{\hat{C}(f)}\subseteq \mathbb{H},
\end{equation}
is a fundamental domain for the group $\Gamma$ (see \emph{e.g.}, \cite{Ford}, \cite[Theorem H.3 p. 32]{MB},  \cite[Theorem 3.3.5]{KS}). The fundamental domain $R_0$ is well-known as the \emph{Ford region for} $\Gamma$.

On the other hand, we can get a Riemann surface from any Fuchsian group $\Gamma$. It is only necessary to define the action as follows
\begin{equation}\label{eq:11}
 \alpha : \Gamma \times \mathbb{H} \to  \mathbb{H}, \quad (f,z) \mapsto f(z),
\end{equation}
which is proper and discontinuous (see \cite[Theorem 8. 6]{KS2}). Now, we define the subset
\begin{equation}
\label{eq:12}
K:=\{w\in\mathbb{H}: f(w)=w \text{ for any } f\in \Gamma-\{Id\}\}\subseteq\mathbb{H}.
\end{equation}
We note that
\begin{itemize}
\item \emph{The subset $K$ is countable and discrete.} Given that each element contained in $\Gamma$ fixes finitely many points of $\mathbb{H}$ and every Fuchsian group is countable. Thus we conclude that the set $K$ is countable. Contrarily, if $K$ is not discrete, then there is a point $w\in K$ and $\varepsilon >0$ such that the ball $B_{\varepsilon}(w)$ contains infinitely many  points of $K$. In other words, the set $\{f\in \Gamma : f(B_{\varepsilon}(w))\cap B_{\varepsilon}(w) \neq \emptyset\}$ is infinite. Clearly, it is a contradiction to proper discontinuity of the group $\Gamma$ on $\mathbb{H}$.
\item \emph{The action $\alpha$ leaves invariant the subset $K$.} If $w$ is a point contained in $K$, which is fixed by the element $f\in \Gamma$, then for any $g\in \Gamma$ the point $f(w)$ is fixed by the composition of isometries $f \circ g \circ f^{-1}(z)$.
\end{itemize}
Then the action $\alpha$ restricted to the hyperbolic plane $\mathbb{H}$ removing the subset $K$ is free, proper and discontinuous. 
Therefore, the quotient space (also called the space of the $\Gamma$-orbits)
\begin{equation}
\label{eq:13}
S:= (\mathbb{H}-K)/\Gamma
\end{equation}
is well-defined and via the projection map
\begin{equation}\label{eq:14}
\pi : (\mathbb{H}-K) \to  S, \quad  z \mapsto [z].
\end{equation}
It comes with a hyperbolic structure, it means, $S$ is a Riemann surface (see \emph{e.g.}, \cite{LJ}).

\begin{remark}\label{r:2.13}
If $R$ is a locally finite\footnote{In the sense due to Beardon  on \cite[Definition 9.2.3]{Bear}.} fundamental domain for the Fuchsian group $\Gamma$, then the quotient space $(\mathbb{H}-K)/\Gamma$ is homeomorphic to $R/\Gamma$ (see the Theorem 9.2.4 on \cite{Bear}).
\end{remark}




\subsection{Classical Schottky groups and Geometric Schottky groups}\label{section2.5}

In general, by a classical Schottky group it is understood a finitely generated subgroup of $PSL(2, \C)$, generated by isometries sending the exterior of one circular disc to the exterior of a different circular disc, both of them disjoint. From the various definitions, we consider the one given in \cite{MB}, but there are alternative and similar definitions that can be found in \cite{BJ, Carne, TM}.

Let $C_1, C_1^{\prime}, \ldots, C_n, C_n^{\prime}$ be a set of disjoint countably of circles in the extended complex plane $\widehat{\mathbb{C}}$, for any $n\in\mathbb{N}$, bounding a common region $D$. For every $j\in\{1,\ldots,n\}$, we consider the  M\"{o}bius transformation $f_j$, which sends the circle $C_j$ onto the circle $C^{'}_j$, \emph{i.e.}, $f_j(C_j)= C_j^{\prime}$ and $f_j(D)\cap D=\emptyset$. The group $\Gamma$ generated by the set $\{f_j, f_j^{-1}: j\in \{1,\ldots, n\}\}$ is called a \emph{classical Schottky group}.

The \emph{Geometric Schottky groups} can be acknowledged as a nice generalization of the \emph{Classical Schottky groups} because the definition of the first group is extended to the second one, in the sense that the Classical Schottky groups are  finitely generated by definition, while the Geometric Schottkky group can be infinitely generated. This geometric groups were done thanks to \emph{Anna, Z.} (see \cite[Section 3]{Ziel}) and they are the backbone of our main result \ref{T:0.2} and \ref{T:0.3}.



A subset $I\subseteq \mathbb{Z}$ is called \emph{symmetric} if it satisfies that $0\notin I$ and for every $k\in I$ implies $-k\in I$.

\begin{definition}\label{d:2.14}
\cite[Definition 2. p. 28]{Ziel} Let $\{A_k:k\in I\}$ be a family of straight segments in the real line $\mathbb{R}$, where $I$ is a symmetric subset of $\mathbb{Z}$ and let $\{f_k:k\in I\}$ be a subset of $PSL(2, \mathbb{R})$. The pair
\begin{equation}\label{eq:15}
\mathfrak{U}(A_k,f_k,I):=(\{A_k\}, \{f_k\})_{k\in I}
\end{equation}
 is called a \emph{Schottky description}\footnote{The writer gives this definition to the Poincar\'e disc and we use its equivalent to the half plane.} if it satisfies the following conditions:
\begin{enumerate}
  \item The closures subsets $\overline{A_k}$ in $\mathbb{C}$ are mutually disjoint.
  \item None of the $\overline{A_k}$ contains a closed half-circle.
  \item For every $k\in I$, we denote as $C_k$ the half-circle whose ends points coincide to the ends points of $\overline{A_k}$, which  is the isometric circle of $f_k$. Analogously, the half-circle $C_{-k}$ is the isometric circle of $f_{-k}:=f^{-1}_k$.
  \item For each $k\in I$, the M\"obius transformation $f_k$ is hyperbolic.
  \item There is an $\epsilon> 0$ such that the closed hyperbolic $\epsilon$-neighborhood of the half-circles $C_{k}$, $k\in I$ are pairwise disjoint.
\end{enumerate}
\end{definition}

\begin{definition}\label{d:2.12}
\cite[Definition 3. p. 29]{Ziel} A subgroup $\Gamma$ of $PSL(2,\mathbb{R})$ is called \emph{Schottky type} if there exists a Schottky description $\mathfrak{U}(A_k,f_k,I)$ such that the generated group by the set $\{f_k:k\in I\}$ is equal to $\Gamma$, \emph{i.e.} we have $\Gamma=\langle f_k : k\in I \rangle$.
\end{definition}

We note that any Schottky description $\mathfrak{U}(A_k,f_k,I)$ defines a Geometric Schottky group.


\begin{proposition}\label{p:2.16}
\cite[Proposition 4. p.34]{Ziel} Every Geometric Schottky group $\Gamma$ is a Fucshian group.
\end{proposition}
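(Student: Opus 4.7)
The plan is to apply a Klein–Maskit (ping-pong) argument to the collection of isometric circles supplied by the Schottky description, exhibit a common exterior as a fundamental region, and then derive discreteness from properly discontinuous action.

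First I would fix the Schottky description $\mathfrak{U}(A_k,f_k,I)$ defining $\Gamma$ and consider the closed region
\[
R \;:=\; \bigcap_{k\in I}\overline{\hat{C}(f_k)}\;\subset\;\mathbb{H}.
\]
The starting observation, standard for isometric circles (see the discussion around equations (\ref{eq:5})–(\ref{eq:6}) and Remark \ref{r:2.9}), is that each $f_k$ sends $C(f_k)$ to $C(f_k^{-1})=C_{-k}$ and, being an isometry, must exchange the inside with the outside; hence
\[
f_k\bigl(\hat{C}(f_k)\bigr)\;=\;\check{C}(f_{-k}).
\]
Combined with condition (1) of Definition \ref{d:2.14}, which says the closed discs bounded by the various $C_j$ are pairwise disjoint, this gives the ping-pong setup: $\check{C}(f_{-k})\subset \hat{C}(f_j)$ for every $j\neq -k$.

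Next I would prove by induction on word length that any non-trivial reduced word $w=f_{k_n}\circ\cdots\circ f_{k_1}$ in the generators (reduced in the sense $k_{i+1}\ne -k_i$) satisfies $w(R)\subset \overline{\check{C}(f_{-k_n})}$. Indeed, for $z\in R$ we have $z\in\hat{C}(f_{k_1})$, so $f_{k_1}(z)\in \check{C}(f_{-k_1})$; since $k_2\ne -k_1$, disjointness of the discs puts $f_{k_1}(z)$ in $\hat{C}(f_{k_2})$, and the inductive step produces $f_{k_2}f_{k_1}(z)\in\check{C}(f_{-k_2})$, and so on. In particular $w(R)$ is disjoint from $\mathrm{Int}(R)$ for every non-trivial $w$, so $\mathrm{Int}(R)\cap f(\mathrm{Int}(R))=\emptyset$ for all $f\in\Gamma\setminus\{Id\}$. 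A similar ping-pong argument, together with the fact that each $f_k$ is hyperbolic (condition (4)), shows that the $\Gamma$-translates of $R$ tessellate $\mathbb{H}$, so $R$ is a fundamental region in the sense of Definition \ref{d:2.12}.

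To pass from the fundamental-region property to discreteness, I would invoke condition (5): there exists $\epsilon>0$ such that the closed hyperbolic $\epsilon$-neighborhoods of the half-circles $C_k$, $k\in I$, are pairwise disjoint (this is precisely the content of Lemma \ref{l:2.11} applied uniformly). Fix a point $z_0\in\mathrm{Int}(R)$ whose hyperbolic distance to $\bigcup_k C_k$ exceeds $\epsilon$, and let $B$ be the open hyperbolic ball of radius $\epsilon/2$ around $z_0$. For any non-trivial reduced word $w=f_{k_n}\cdots f_{k_1}$, the image $w(B)$ lies inside the $\epsilon/2$-thickening of $\check{C}(f_{-k_n})$, which is disjoint from $B$. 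Hence $\{g\in\Gamma:g(B)\cap B\neq\emptyset\}=\{Id\}$, so $\Gamma$ acts properly discontinuously at $z_0$, which is well known to imply that $\Gamma$ is a discrete subgroup of $PSL(2,\mathbb{R})$, i.e.\ Fuchsian.

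The main obstacle is handling the infinite-generator case cleanly: in the classical finite Schottky setting the ping-pong sets are automatically uniformly separated, whereas here one must genuinely use the uniform $\epsilon$-neighborhood condition (5) in the definition of a Schottky description to guarantee that the ball $B$ avoids \emph{all} the translates $w(B)$ simultaneously. Everything else is a bookkeeping induction on word length using the disjointness of the isometric discs.
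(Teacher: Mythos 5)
The paper itself offers no proof of this proposition: it is imported verbatim from Zielicz \cite{Ziel}, so there is no internal argument to compare against. Your ping-pong proof is the standard one (and is essentially the argument in the cited source): the isometric-circle property $f_k\bigl(\hat{C}(f_k)\bigr)=\check{C}(f_{-k})$, the disjointness of the closed half-discs from condition (1) of Definition \ref{d:2.14}, and induction on reduced word length give $w\bigl(\mathrm{Int}(R)\bigr)\subset\check{C}(f_{-k_n})$, hence $w\bigl(\mathrm{Int}(R)\bigr)\cap\mathrm{Int}(R)=\emptyset$ for every non-trivial reduced word; a ball inside $\mathrm{Int}(R)$ then witnesses proper discontinuity at a point, which forces discreteness. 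That chain of reasoning is correct.

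Two points deserve tightening. First, the sentence asserting that the $\Gamma$-translates of $R$ tessellate $\mathbb{H}$ is both unproved (and genuinely delicate when $I$ is infinite) and unnecessary: discreteness uses only the packing half of the fundamental-region property, namely $\mathrm{Int}(R)\cap w(\mathrm{Int}(R))=\emptyset$, so you should simply drop the tessellation claim rather than lean on it. Second, the final step is more elaborate than it needs to be: once $w(B)\subset\check{C}(f_{-k_n})$ and $B\subset\mathrm{Int}(R)\subset\hat{C}(f_{-k_n})$, the two sets are already disjoint, with no $\epsilon/2$-thickening argument required. The place where condition (5) of Definition \ref{d:2.14} (equivalently, the uniform version of Lemma \ref{l:2.11}) genuinely enters is in guaranteeing that $\mathrm{Int}(R)$ is non-empty and contains a hyperbolic ball of definite radius even though infinitely many isometric circles may accumulate; you assume such a point $z_0$ exists without comment, and that is the one step in the infinite-generator case that should be argued explicitly.
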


The \emph{standard fundamental domain} for the Geometric Schottky group $\Gamma$ having Schottky description  $\mathfrak{U}(A_k,f_k, I)$ is the intersection of all outside of the half-circle associated to the transformations  $f_k$'s, \emph{i.e.},
\begin{equation}\label{eq:17}
F(\Gamma):=\bigcap_{k\in I} \overline{\hat{C}_{k}}\subset \mathbb{H}.
\end{equation}

\begin{proposition}\label{p:2:17}
\cite[Proposition 2. p. 33]{Ziel} The standard fundamental domain $F(\Gamma)$ is a fundamental domain for the Geometric Schottky group $\Gamma$.
\end{proposition}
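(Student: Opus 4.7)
My plan is to verify the two defining conditions of Definition 2.12 for the closed region $F(\Gamma) = \bigcap_{k \in I} \overline{\hat{C}_k}$: namely, (i) the covering property $\bigcup_{g\in\Gamma} g(F(\Gamma)) = \mathbb{H}$, and (ii) the disjoint-interior property $\mathrm{Int}(F(\Gamma)) \cap g(\mathrm{Int}(F(\Gamma))) = \emptyset$ for every $g \in \Gamma \setminus \{\mathrm{Id}\}$. The whole argument is driven by two elementary properties of each generator $f_k$: the isometric-circle identity $f_k(\overline{\hat{C}_k}) = \overline{\check{C}_{-k}}$, swapping inside and outside of the involved isometric circles (see equations \eqref{eq:5}--\eqref{eq:6}), and the height identity $\mathrm{Im}(f_k(z)) = \mathrm{Im}(z)/|c_k z + d_k|^2$, which gives $\mathrm{Im}(f_k(z)) > \mathrm{Im}(z)$ precisely when $z \in \check{C}_k$.

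For condition (ii), the natural tool is a ping-pong argument on reduced words. Every non-identity $g\in\Gamma$ admits a reduced expression $g = f_{k_n} \circ \cdots \circ f_{k_1}$ with $k_{i+1} \neq -k_i$, and I would show by induction on $n$ that $g(F(\Gamma)) \subseteq \overline{\check{C}_{-k_n}}$. The base case follows from $F(\Gamma) \subseteq \overline{\hat{C}_{k_1}}$ and the isometric-circle identity applied to $f_{k_1}$. In the inductive step, Schottky condition (1) guarantees that the segments $\overline{A_{k_n}}$ and $\overline{A_{-k_{n-1}}}$ are disjoint closed intervals in $\mathbb{R}$---hence necessarily side-by-side, since disjoint closed intervals in $\mathbb{R}$ cannot be nested---which yields $\overline{\check{C}_{-k_{n-1}}} \subseteq \overline{\hat{C}_{k_n}}$, and applying $f_{k_n}$ transports the inductive hypothesis to the required inclusion. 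Since $\mathrm{Int}(F(\Gamma)) \subseteq \hat{C}_{-k_n}$ is disjoint from $\overline{\check{C}_{-k_n}}$, condition (ii) follows at once.

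For condition (i), I would iteratively apply generators to maximize the height along the orbit of an arbitrary $z\in\mathbb{H}$. If $z \in F(\Gamma)$ we are done; otherwise there exists $k_1$ with $z \in \check{C}_{k_1}$, and the height identity ensures $z_1 := f_{k_1}(z)$ has strictly larger imaginary part. Repeating yields a sequence of distinct orbit points $z_0, z_1, z_2, \dots$ of strictly increasing heights, and the procedure terminates at some $z_N \in F(\Gamma)$ provided the sequence is finite. Termination follows from the finiteness of the set $\{g\in\Gamma : \mathrm{Im}(gz) \geq \mathrm{Im}(z)\}$: the inequality forces $|c_g z + d_g| \leq 1$, which traps the pair $(c_g, d_g)$ in a bounded region of $\mathbb{R}^2$, and combined with the discreteness of $\Gamma$ (Proposition \ref{p:2.16}) together with the properly discontinuous action on $\mathbb{H}$, only finitely many $g\in\Gamma$ can satisfy this constraint.

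I expect the main obstacle to be a careful proof of termination in the \emph{infinitely} generated setting, where one has to rule out the possibility that orbit points with bounded imaginary part escape horizontally to the boundary point $\infty$. This is precisely where the Schottky separation condition (5) plays its essential role: the uniform hyperbolic $\epsilon$-neighborhood disjointness of the family $\{C_k\}_{k \in I}$ propagates through compositions to force the isometric circles of arbitrary non-identity elements of $\Gamma$ to have radii tending to zero, which forbids pathological accumulation of orbit points at $\infty$ and validates the finiteness step above.
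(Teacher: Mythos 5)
The paper does not actually prove this proposition --- it is imported verbatim from Zielicz's thesis --- so the only question is whether your argument stands on its own. Your treatment of condition (ii) does: the ping-pong induction showing $g(F(\Gamma))\subseteq\overline{\check{C}_{-k_n}}$ for every nonempty reduced word $g=f_{k_n}\circ\cdots\circ f_{k_1}$ is correct, and the key inclusion $\overline{\check{C}_{-k_{n-1}}}\subseteq\overline{\hat{C}_{k_n}}$ really does follow from the disjointness of the base segments in condition (1) of Definition \ref{d:2.14}, exactly as you say.

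The gap is in the termination step for condition (i), and the mechanism you propose to close it is false for the groups of this very paper. You claim the separation condition forces ``the isometric circles of arbitrary non-identity elements of $\Gamma$ to have radii tending to zero.'' Consider the group of Theorem \ref{T:0.1}: every generator $f_n$, $g_n$ has $c=1$, hence isometric radius exactly $1$, and there are infinitely many of them; the radii do not tend to zero because $\infty$ is a limit point of this group (the centers $8n$ accumulate there), which is precisely the hypothesis excluded by the classical ``only finitely many isometric circles of radius $>\epsilon$'' lemma. What you actually need is the weaker, pointwise statement: for each fixed $z=x+iy$, only finitely many $g\in\Gamma$ satisfy $|c_g z+d_g|\le 1$, equivalently $z\in\overline{\check{C}(g)}$. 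For generators this is easy, since the closed insides $\overline{\check{C}_k}$ are pairwise disjoint and each relevant one has a base segment of length at least $2y$ meeting a fixed bounded interval. For reduced words of length at least $2$ one must first show $\overline{\check{C}(g)}\subseteq\check{C}_{k_1}$ (via $|g'|=\prod_i|f_{k_i}'|$ and ping-pong) and then that the radii $1/|c_g|$ decay geometrically in the word length --- this is exactly where the uniform $\epsilon$-separation of condition (5) is consumed, and none of it appears in your sketch. The appeal to ``discreteness plus proper discontinuity'' cannot substitute for it: proper discontinuity controls orbit points in compact sets, whereas $\{w\in\mathbb{H}:\mathrm{Im}(w)\ge \mathrm{Im}(z)\}$ is not compact --- the very escape-to-$\infty$ scenario you flag but do not rule out. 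So the architecture is right, condition (ii) is proved, but condition (i) is not.
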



\section{Main result}\label{section3}


In this section we present the proof of our main results, which are based on the following sketch. First, we shall build explicitly a suitable family of mutually disjoint half circle $\mathcal{C}$ (the geometrical construction of the Cantor set plays an important role in the building of this family during the proof of the theorems \ref{T:0.2}, and \ref{T:0.3}). Then we shall define the set $J$ composed by the M\"{o}bius transformation having as isometric circles the elements of $\mathcal{C}$. Immediately, we could prove that $\Gamma$ the subgroup of $PSL(2,\mathbb{R})$ generated by $J$ is a Fuchsian group. Also, we show up that the quotient space $\mathbb{H} / \Gamma$ is the desired non-compact surface.


\subsection{Proof Theorem \ref{T:0.1}}\label{section3.1}

\hfill \break

 \textbf{Step 1. Building the group $\Gamma$.} Given $\mathcal{C}$ the family composed by the whole half-circle with center the even numbers in the real line $\mathbb{R}$ and radius one (see the Figure \ref{Figure6}), we consider  $\Gamma$ the subgroup of $PSL(2,\mathbb{Z})$, which is generated by the set $\{f_n(z), g_n(z), f^{-1}_n(z), g^{-1}_n(z): n\in \mathbb{Z}\}$, where

\begin{align*}
f_{n}(z)      & := \frac{(8n+4)z-(1+8n(8n+4))}{z-8n}, \\
g_{n}(z)      & := \frac{(8n+6)z +(-1-(8n+2)(8n+6))}{z-(8n+2)},\\
f_{n}^{-1}(z) & := \frac{-8nz+(1+8n(8n+4))}{-z+(8n+4)}, \\
g_{n}^{-1}(z) & := \frac{-(8n+2)z +(1+(8n+2)(8n+6))}{-z+(8n+6)}.
\end{align*}
\begin{figure}[h!]
\begin{center}
\includegraphics[scale=0.7]{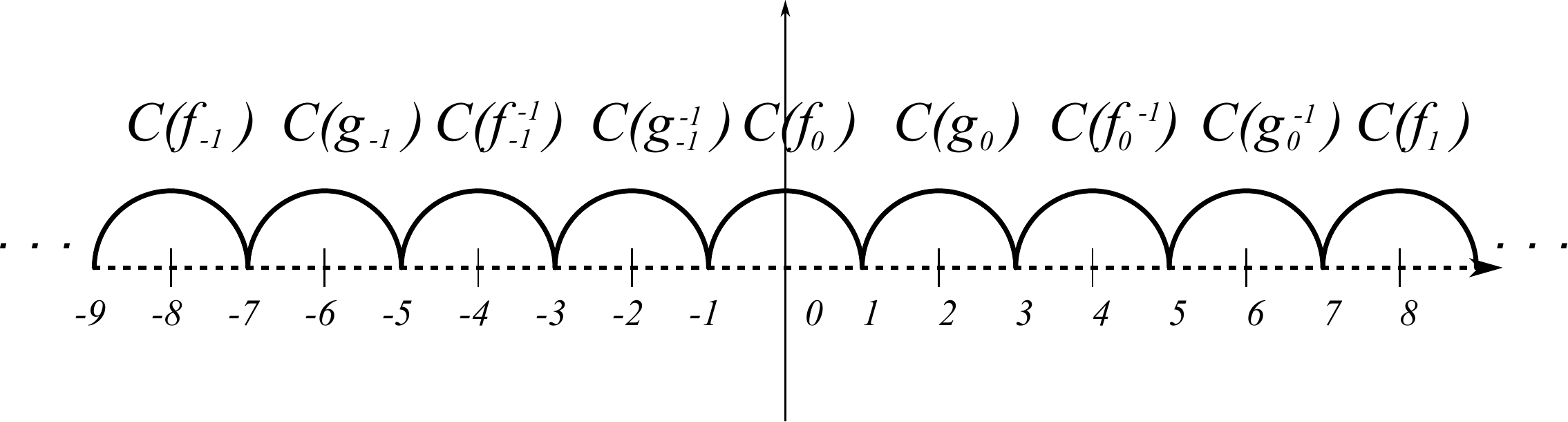}\\
\caption{\emph{Family of half-circle $\mathcal{C}$.}}
 \label{Figure6}
\end{center}
\end{figure}

We remark that for any $n\in\mathbb{Z}$, the M\"obius transformations $f_{n}(z)$ and $f^{-1}_{n}(z)$ of $PSL(2,\mathbb{R})$ have as isometric circle the two half-circles $C(f_n)$ and $C(f^{-1}_n)$ of $\mathcal{C}$, whit centers at the points $8n$ and $8n+4$ respectively, \emph{i.e.},
\begin{align*}
  C(f_{n})     & :=  \{z\in \mathbb{H}: |z-8n|=1\}, \\
  C(f_{n}^{-1})& := \{z\in\mathbb{H}: |-z+(8n+4)|=1\}.
\end{align*}
Analogously, the M\"obius transformations $g_{n}(z)$ and $g^{-1}_{n}(z)$ of $PSL(2,\mathbb{R})$ have as isometric circle the two half-circles $C(g_n)$ and $C(g^{-1}_n)$ of $\mathcal{C}$, whit centers at the points $8n+2$ and $8n+6$ respectively, \emph{i.e.},
\begin{align*}
C(g_{n})&:= \{z\in\mathbb{H}: |z-(8n+2)|=1\}, \\
C(g^{-1}_n) &:=\{z\in\mathbb{H}: |-z +(8n+6)|=1\}.\\
\end{align*}

Since $\Gamma$ is a subgroup of the Fuchsian group $PSL(2,\mathbb{Z})$, then $\Gamma$ is an infinitely generated \emph{Fuchsian group} composed by M\"obius transformations having integers coefficients. Moreover, the cardinality of the subgroup $\Gamma$ is countable. Now, we define the subset $K\subseteq \mathbb{H}$ as the equation \ref{eq:12}, then the Fuchsian group $\Gamma$ acts freely and properly discontinuously on the open subset $\mathbb{H}-K$. It means that the quotient space
\[
S:= (\mathbb{H}-K)/\Gamma
\]
is  a well-defined hyperbolic surface via the projection map $\pi: (\mathbb{H}-K)\to S$.

We note that to this case $K=\emptyset$ because the intersection of any two different elements belonged to $\mathcal{C}$ is either: empty or at infinity, that means, they meet in the same point in the real line $\mathbb{R}$.

\vspace{2mm}
\textbf{Step 2. The desired surface.} To end the proof we must prove that $S$ is the Infinite Loch Ness monster, \emph{i.e.}, it has infinite genus and only one end. The introduction of the following remark is necessary.

\begin{remark}\label{r:3.1}
The following facts come from the definition of the family $\mathcal{C}$ and the group $\Gamma$.


1. The family $\mathcal{C}$ can be written as $\mathcal{C}:=\{C(f_{n}),C(g_{n}), C(f_{n}^{-1}), C(g^{-1}_n): n\in\mathbb{Z} \}$. Remember that the intersection of any two different elements belonged to $\mathcal{C}$ is either: empty or at infinity, that is, they meet in the same point in the real line $\mathbb{R}$.

2. The Ford region $R_0$ associated to $\Gamma$  (see the Figure \ref{Figure7})
\begin{equation}\label{eq:15}
\begin{array}{ccl}
  R_0 & =&\bigcap\limits_{n\in\mathbb{Z}} \left( \overline{\hat{C}(f_{n})}\, \cap \overline{\hat{C}(f^{-1}_{n})}\, \cap\, \overline{\hat{C}(g_{n})} \, \cap \, \overline{\hat{C}(g^{-1}_{n})}\right),  \\
      &                           =                            &  \bigcap\limits_{n\in\mathbb{Z}} \big(\{z\in\mathbb{H}:|z-8n|^{-2} \leq 1\}\cap \{z\in\mathbb{H}:|-z+(8n+4)|^{-2} \leq 1\}\\
      &                                                       &\hspace{7mm} \cap \{z\in\mathbb{H}:|z-(8n+2)|^{-2} \leq 1\}\cap \{z\in\mathbb{H}:|-z+(8n+6)|^{-2} \leq 1\}),
\end{array}
\end{equation}
is a fundamental domain for the Fuchsian group $\Gamma$.
\begin{figure}[h!]
\begin{center}
\includegraphics[scale=0.7]{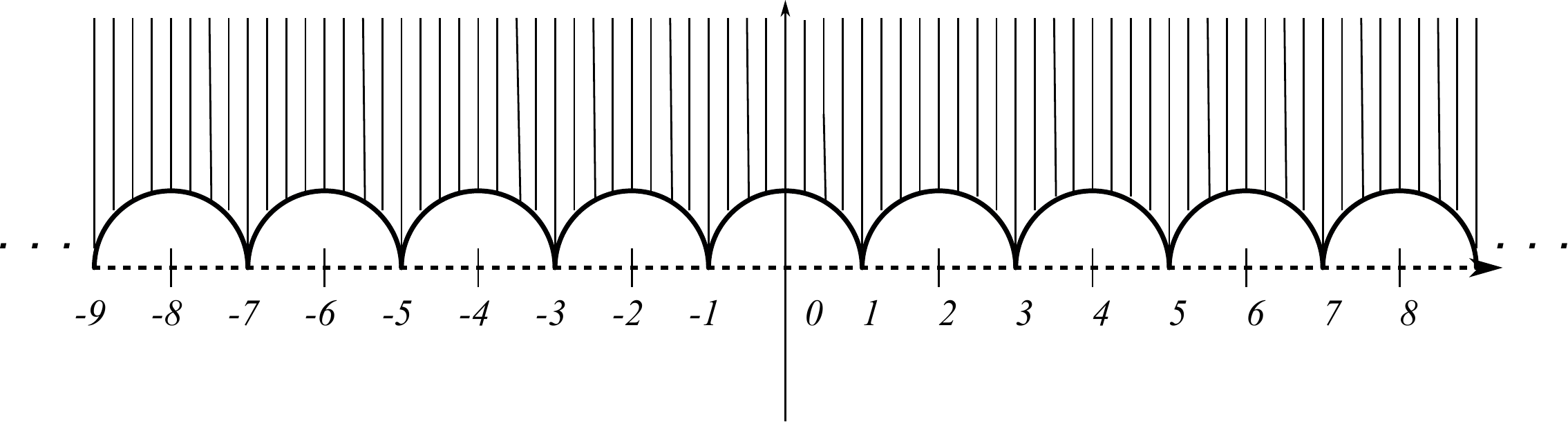}\\
\caption{\emph{Ford region $R_0$ associated to $\Gamma$.}}
 \label{Figure7}
\end{center}
\end{figure}
\end{remark}

Note that the fundamental domain $R_0$ given in the equation \ref{eq:15} is connected and locally finite having infinite hyperbolic area. Further, its boundary is the family of half-circles $\mathcal{C}$ \emph{i.e.}, it consists of infinite hyperbolic geodesic with ends points at infinite and mutually disjoint.


Given that $R_0$ the fundamental domain of $\Gamma$ is  non-compact Dirichlet region having infinite hyperbolic area, then the quotient space $S$ is also a non-compact hyperbolic surface with infinite hyperbolic area (see \cite[Theorem 14.3 p. 283]{KS2}). 

\vspace{2mm}
\textbf{The hyperbolic surface $S$ has only one end.} Let $K$ be a compact subset of $S$. We must prove that  there is a compact subset $K\subset K^{'}\subset S$ such that the difference $S \setminus K^{'}$ is connected. Seeing that the quotient $R_0/\Gamma$ is homeomorphic to $S$ we must suppose that there is a compact subset $B\subset R_0$ such that the projection map $\pi$ (see the equation \ref{eq:14}) sends the intersection $B\cap R_0$ to $K$ \emph{i.e.}, $\pi(B\cap R_0)=K$. Given that the hyperbolic plane $\mathbb{H}$ has exactly one end, then there exist two closed intervals $I_1, I_2\subset\mathbb{R}$ such that  $B\subset I_1 \times I_2\subset \mathbb{H}$, and the difference $\mathbb{H}\setminus (I_1 \times I_2)$ is connected. The projection map $\pi$ sends the intersection $B\cap R_0$ into a compact subset of $S$, which we denote by
\begin{equation}\label{eq:19}
\pi(B\cap R_0):= K^{'}\subset S.
\end{equation}
We note that by construction $K\subseteq K^{'}$. We claim that $S\setminus K^{'}$ is connected. Let $[z]$ and $[w]$ be two different points belonged to $S\setminus K^{'}$ we shall build a path in $S\setminus K^{'}$ joining both points.  On the other hand, for every $x\in \mathbb{R}\setminus I_1$ we consider the geodesic $\gamma_x$ of hyperbolic plane $\mathbb{H}$, which is perpendicular line to the real line and one of its ends points is $(x,0)$. Similarly, for every $y>0$ with $y\notin I_2$ we consider the connected subset $\gamma^y:=\{z \in \mathbb{H}: Im(z)=y\}$ of the hyperbolic plane $\mathbb{H}$.
\begin{remark}
\label{rem:3.3}
 The subsets $\gamma_x$ and $\gamma_y$ have the following properties:
 \begin{itemize}
 \item The intersection $\gamma_x \cap (I_1 \times I_2)=\emptyset$, and the projection map $\pi$ sends the set $\gamma_x\cap R_0$ into a connected subset of $S$.

 \item The intersection $\gamma^y \cap (I_1 \times I_2 )=\emptyset$, and the projection map $\pi$ sends the set $\gamma^y\cap R_0$ into a connected subset of $S$.
 \end{itemize}
\end{remark}
Given the two different equivalent classes $[z]$ and $[w]$ of $S\setminus K^{' }$ without loss of generality we can assume that $z,w\in R_0\setminus (I_{1}\times I_2)$, then there exist two connected subsets $\gamma$ and $\gamma^{'}$ as shown above such that $z\in \gamma$ and $w\in \gamma^{'}$. If $\gamma\cap \gamma^{'}\neq \emptyset$ then the image of $(\gamma\cup \gamma^{'})\cap R_0$ under $\pi$ is a connected subset belonged to $S\setminus K^{'}$ containing the points  $[z]$ and $[w]$. Oppositely, if $\gamma\cap \gamma^{'}= \emptyset$ then there is a connected subset $\gamma^{\ast}$ as shown above such that $\gamma\cap \gamma^{\ast}\neq \emptyset$ and $\gamma^{'}\cap \gamma^{\ast}\neq \emptyset$. Consequently, the image of $(\gamma\cup \gamma^{'}\cup \gamma^{\ast})\cap R_0$ under $\pi$  is a connected subset belonged to $S\setminus K^{'}$ containing the points $[z]$ and  $[w]$. This proves that the subset $S\setminus K^{'}$ is connected.

\vspace{2mm}
\textbf{The hyperbolic surface $S$ has infinite genus.} For every $n\in\mathbb{Z}$, we defined the subset
\[
S_n:=\{z\in\mathbb{H}: 0< Im(z) < 3 \text{ and } -1+8n < Re(z) < 7+8n\}\subset \mathbb{H}.
\]

The projection map $\pi$ sends the intersection $S_n \cap R_0$ into a subsurface with boundary $\widehat{S}_n \subset S$, which is homeomorphic to the torus punctured by only one point (see the Figure \ref{Figure8}). Furthermore, for any two different integers $m\neq n\in\mathbb{Z}$ the subsurfaces $\widehat{S}_n$ and $\widehat{S}_m$ are disjoint. Thus, we conclude that the hyperbolic surface $S$ has infinite genus.
\qed

\begin{figure}[h!]
\begin{center}
\includegraphics[scale=0.7]{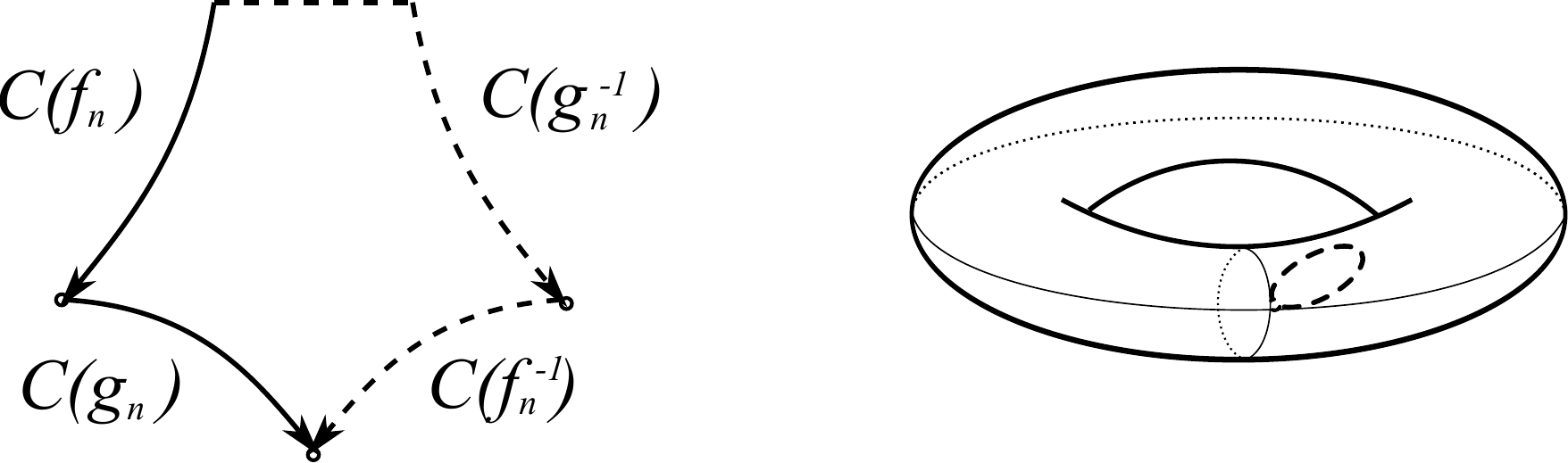}\\
\caption{\emph{Subsurface $\widehat{S}_n\subset S$}.}
 \label{Figure8}
\end{center}
\end{figure}

So, from Theorem \ref{T:uni} we can conclude that:

\begin{corollary}
The fundamental group of the Infinite Loch Ness monster is isomorphic to $\Gamma$.
\end{corollary}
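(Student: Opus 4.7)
The plan is to deduce this corollary as a direct consequence of Theorem \ref{T:0.1} together with part (4) of the Uniformization Theorem (Theorem \ref{T:uni}). The essential point is that in the proof of Theorem \ref{T:0.1} (Step 1) it was observed that the exceptional set $K\subseteq \mathbb{H}$ of equation (\ref{eq:12}) is empty for this particular group, because any two distinct half-circles of the family $\mathcal{C}$ are either disjoint in $\mathbb{H}$ or meet only at the real line. Consequently $\Gamma$ acts freely, properly and discontinuously on the whole upper half-plane $\mathbb{H}$, not only on $\mathbb{H}\setminus K$.

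First, I would spell out that freeness of the action implies that the projection $\pi\colon \mathbb{H}\to \mathbb{H}/\Gamma$ is a covering map without branch points, and therefore $\mathbb{H}$ is the holomorphic universal covering surface of $\mathbb{H}/\Gamma$. By Theorem \ref{T:0.1}, the Riemann surface $\mathbb{H}/\Gamma$ is homeomorphic (in fact, conformally equivalent) to the Infinite Loch Ness monster $S$. In particular, since the Infinite Loch Ness monster is not conformally equivalent to $\hat{\mathbb{C}}$, $\mathbb{C}$, $\mathbb{C}\setminus\{0\}$, or a torus (it has infinite genus and hence infinite-rank fundamental group), the relevant universal cover in Theorem \ref{T:uni} is indeed $\Delta\cong \mathbb{H}$, so the hypotheses of the Uniformization Theorem apply in a clean way.

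Next, I would invoke part (4) of Theorem \ref{T:uni}, which states that the cover group of a Riemann surface is isomorphic to its fundamental group. Since $\Gamma$ is precisely the group of deck transformations of the universal cover $\mathbb{H}\to \mathbb{H}/\Gamma\cong S$, we obtain $\pi_1(S)\cong \Gamma$, as required.

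The argument is essentially a bookkeeping step rather than a hard proof, so there is no real obstacle; the only subtlety worth emphasizing is the verification that $K=\emptyset$, which justifies that the action is free and that $\mathbb{H}$ (rather than $\mathbb{H}\setminus K$) serves as the universal cover, allowing Theorem \ref{T:uni}(4) to apply directly.
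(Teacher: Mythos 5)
Your proposal is correct and follows the same route as the paper, which likewise derives this corollary directly from Theorem \ref{T:uni}(4) after noting in Step 1 of the proof of Theorem \ref{T:0.1} that $K=\emptyset$, so that $\Gamma$ acts freely and properly discontinuously on all of $\mathbb{H}$ and $\mathbb{H}\to\mathbb{H}/\Gamma$ is the universal covering with deck group $\Gamma$. Your additional remarks (ruling out the other universal covers, emphasizing why $K=\emptyset$ matters) only make explicit what the paper leaves implicit.
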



\subsection{Proof Theorem \ref{T:0.2}}\label{section3.2}

\hfill \break

\textbf{Step 1. Build the group $\Gamma$.}

\vspace{2mm}
\noindent \textbf{For $n=1$. Build the set $J_1$ containing exactly two M\"obius transformations and the set $\mathcal{C}_1$ composed by its respective isometric circles.}  We consider the closed interval $I_0=[1,2]$ and its symmetrical with respect to the imaginary axis $-I_0=[-2,-1]$ (see the theorem \ref{T:2.1}). We let $\hat{I}_{1}, -\hat{I}_{1}$ be the middle third  of $I_0$ and $-I_0$, respectively. By the remark \ref{r:2.2} we have
\begin{align*}
\hat{I}_1  & =\left[\dfrac{4+s_0}{3}, \dfrac{3+s_1}{3} \right],\\
-\hat{I}_1 & =\left[-\dfrac{3+s_1}{3},-\dfrac{4+s_0}{3}  \right].\\
\end{align*}
Given that $s_0=0$ and $s_1=2$ then we get $\hat{I}_1=\left[\dfrac{4}{3}, \dfrac{5}{3}\right]$ and $-\hat{I}_1:=\left[-\dfrac{5}{3}, -\dfrac{4}{3}\right]$. See the Figure \ref{Figure9}.
\begin{figure}[h!]
\begin{center}
\includegraphics[scale=0.5]{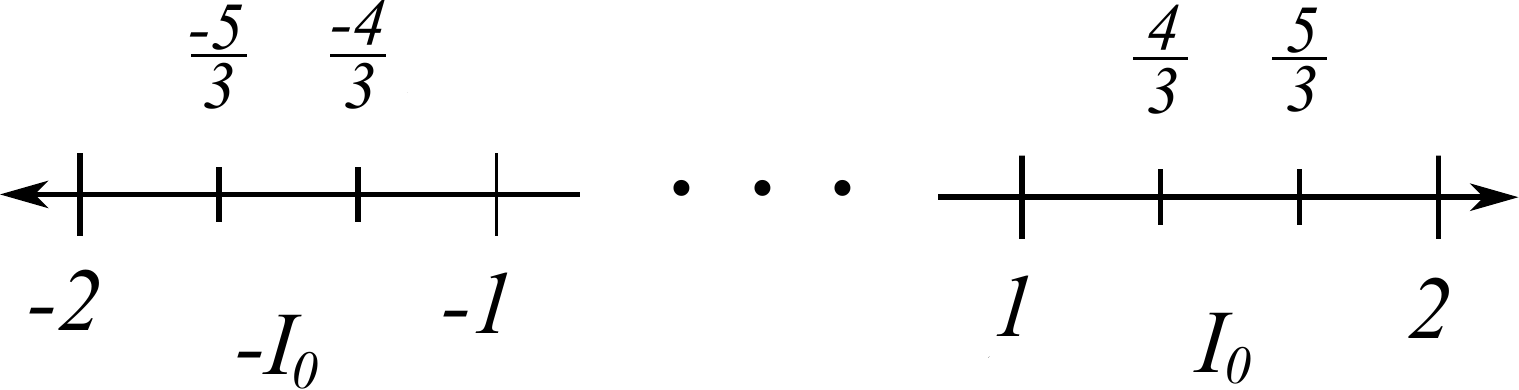}\\
\caption{\emph{The middle thirds of the closed intervals $[1,2]$ and $[-2,-1]$}.}
 \label{Figure9}
\end{center}
\end{figure}

\begin{remark}
If we remove $\hat{I}_1$ and $-\hat{I}_{1}$ of the closed intervals $I_0$ and $-I_0$ respectively, we hold the sets
\[
\begin{array}{rclclc}
I_0\setminus \hat{I}_{1} & = &  I_1 
                                      &  = & \left[1, \dfrac{4+s_0}{3}\right]\bigcup \left[\dfrac{3+s_1}{3}, \dfrac{4+s_1}{3}\right],\\
                           &   &      &  = & \left[1, \dfrac{4}{3}\right]\bigcup \left[\dfrac{5}{3}, 2\right],\\
(-I_0)\setminus(-\hat{I}_{1})  & = &-I_1 
                                         & = & \left[-\dfrac{4+s_1}{3},-\dfrac{3+s_1}{3} \right]\bigcup \left[-\dfrac{4+s_0}{3},-1 \right],\\
                               &   &     & = & \left[-2,-\dfrac{5}{3} \right]\bigcup \left[-\dfrac{4}{3},-1 \right].\\
\end{array}
\]
\end{remark}

On the other hand, the length of closed intervals $\hat{I}_{1,1}$ and $-\hat{I}_{1,1}$ is $\dfrac{1}{3}$ then we choose their respective middle points, which are given by
\begin{equation}\label{eq:17}
\begin{array}{cccl}
\alpha_{1,1}      & := & \dfrac{4+s_0}{3}+\dfrac{1}{3\cdot 2}=\dfrac{9+2 s_0}{3\cdot 2}   & = \dfrac{3}{2}, \\
\alpha_{1,1}^{-1} & := &-\dfrac{4+s_0}{3}-\dfrac{1}{3\cdot 2}=- \dfrac{9+2 s_0}{3\cdot 2} &= -\dfrac{3}{2}.\\
 \end{array}
\end{equation}
By construction the points $\alpha_{1,1}$ and $\alpha^{-1}_{1,1}$ are symmetrical with respect to the imaginary axis \emph{i.e.}, we have the equality $\alpha^{-1}_{1,1}=-\alpha_{1,1}$. Then we let $C(f_{1,1}),  C(f^{-1}_{1,1})$ be the two half-circles having centers $\alpha_{1,1}$ and $\alpha_{1,1}^{-1}$ respectively, and the same radius $r(1)=\dfrac{1}{3 \cdot 2^2}$ (see the Figure \ref{Figure10}). They are given by the formula
\begin{equation}\label{eq:18}
\begin{array}{rcl}
C(f_{1,1})      &=&\left\{z\in \mathbb{H}: \left| 3 \cdot 2^2 z -2\cdot(9+2 s_0)\right|^{-2}=1\right\},\\
                &=&\left\{z\in \mathbb{H}: \left| 12 z -18\right|^{-2}=1\right\},\\
C(f^{-1}_{1,1}) &=&\left\{z\in \mathbb{H}: \left| - 3 \cdot 2^2 z-2\cdot(9+2 s_0)\right|^{-2}=1\right\}, \\
                &=&\left\{z\in \mathbb{H}: \left| - 12 z-18\right|^{-2}=1\right\}. \\
\end{array}
\end{equation}

\begin{figure}[h!]
\begin{center}
\includegraphics[scale=0.5]{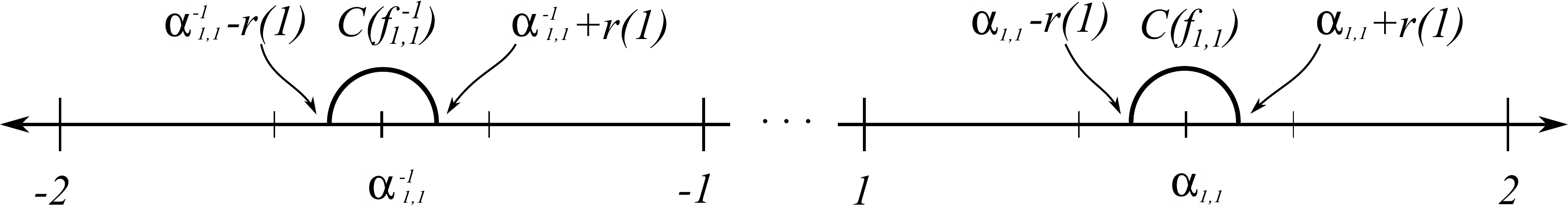}\\
\caption{\emph{Half-circles $C(f_{1,1})$ and $C(f^{-1}_{1,1})$. The points $\alpha_{1,1}+r(1)$ and $\alpha_{1,1}-r(1)$ of $\hat{I}_{1,1}$ are the end points of the half-circle $C(f_{1,1})$. Similarly, the points $\alpha_{1,1}^{-1}+r(1)$ and $\alpha_{1,1}^{-1}-r(1)$ of $-\hat{I}_{1,1}$ are the end points of the half-circle $C(f^{-1}_{1,1})$.}}
 \label{Figure10}
\end{center}
\end{figure}


Now, we calculate the M\"obius transformation and its respective inverse
\begin{equation}\label{eq:19}
\begin{array}{rl}
f_{1,1}(z)     & =\dfrac{a_{1,1}z+b_{1,1}}{c_{1,1}z+d_{1,1}},\\
f_{1,1}^{-1}(z)&=\dfrac{d_{1,1}z-b_{1,1}}{-c_{1,1}z+a_{1,1}},\\
\end{array}
\end{equation}
which have as isometric circle $C(f_{1,1})$  and $C(f^{-1}_{1,1})$, respectively. Using the remark \ref{r:2.9} we have
\[
\begin{array}{ccccccl}
c_{1,1} & = & \dfrac{1}{r(1)}                 & = & 3\cdot 2^2 & = & 12    \text{,} \\
d_{1,1} & = & - c_{1,1}\cdot \alpha_{1,1}     & = & -(18+4s_0) & = &-18    \text{,} \\
a_{1,1} & = & c_{1,1} \cdot \alpha^{-1}_{1,1} & = & -(18+4s_0) & = & -18   \text{.}
\end{array}
\]
Now, we substitute these values in the equation $a_{1,1}\cdot d_{1,1}- c_{1,1}\cdot b_{1,1}=1$ and computing we hold
\[
\begin{array}{ccl}
b_{1,1} & = & \dfrac{(18+4s_0)^2 -1 }{3\cdot 2^2} =\dfrac{323}{12}.
\end{array}
\]
Thus, we know the explicit form of the M\"obius transformations of  the equations \ref{eq:19}  \emph{i.e.},
\[
\begin{array}{ccccc}
f_{1,1}(z)     & = &\dfrac{-(18 + 4s_0)z+ \dfrac{(18+4s_0)^2 -1 }{3\cdot 2^2}}{3\cdot 2^2z -(18+4s_0)} & = & \dfrac{-18z+ \dfrac{323}{12}}{12z -18},\\
f^{-1}_{1,1}(z)& = &\dfrac{-(18 + 4s_0)z- \dfrac{(18+4s_0)^2 -1 }{3\cdot 2^2}}{-3\cdot 2^2z -(18+4 s_0)}& = &\dfrac{-18z- \dfrac{323}{12}}{-12z -18}.\\
\end{array}
\]
Finally, we define the sets $J_1$ and $\mathcal{C}_1$ composed by M\"{o}bius transformations and half-circles, respectively, as such
\begin{equation}
\begin{array}{ccl}
J_1           &:=& \{f_{1,1}(z), \, f^{-1}_{1,1}(z)\}, \\
\mathcal{C}_1 &:=&\{C(f_{1,1}), \, C(f^{-1}_{1,1})\}.\\
\end{array}
\end{equation}

We remark that by construction, each M\"{o}bius transformations of $J_1$ is hyperbolic and the half-circles of $\mathcal{C}_1$ are pairwise disjoint.

\begin{remark}\label{r:3.5}
We let $\Gamma_1$ be the classical Schottky group of rank one generated by the set $J_1$. If we use the same ideas as the step 2 of the Infinite Loch Ness monster case, it is easy to check that the quotient space $S_1:=\mathbb{H} / \Gamma_1$ (see the equation \ref{eq:13}) is a hyperbolic surface homeomorphic to the cylinder. Moreover, $S_1$ has infinite hyperbolic area.
\end{remark}

\noindent \textbf{For $n$. Build a set $J_n$ containing $2^{n}$ M\"obius transformations and the set $\mathcal{C}_n$ composed by its respective isometric circles.}  We consider the closed subset $I_{n-1}=\bigcup\limits_{k=0}^{2^{n-1} -1} \left[\dfrac{3^{n-1}+s_k}{3^{n-1}}, \dfrac{3^{n-1}-s_k +1}{3^{n-1}}\right]\subseteq I_0$ and its symmetrical with respect to the imaginary axis $-I_{n-1}=\bigcup\limits_{k=0}^{2^{n-1} -1} \left[-\dfrac{3^{n-1}-s_k +1}{3^{n-1}}, - \dfrac{3^{n-1}+s_k}{3^{n-1}}\right]\subseteq -I_0$ (see the theorem \ref{T:2.1}). See the Figure \ref{Figure11}. For each $k\in \{0,\ldots,2^{n-1}-1\}$ we let $\hat{I}_{n,k},-\hat{I}_{n,k}$ be the middle third of the closed intervals

\begin{align*}
\left[ \dfrac{ 3^{n-1}+s_{k-1} }{3^{n-1}},\dfrac{3^{n-1}+s_{k-1}+1}{3^{n-1}}\right] &  & \text{ and } & &\left[-\dfrac{3^{n-1}+s_{k-1}+1}{3^{n-1}}, -\dfrac{3^{n-1}+s_{k-1}}{3^{n-1}}\right],\\
\end{align*}
respectively. By the remark \ref{r:2.2} we hold
\begin{align*}
\hat{I}_{n,k}   & =\left[\dfrac{3^{n} + s_{2k-2} +1}{3^{n}}, \dfrac{3^{n} + s_{2k-1}}{3^{n}}\right],\\
-\hat{I}_{n,k}  & =\left[-\dfrac{3^{n} + s_{2k-1}}{3^{n}}, -\dfrac{3^{n} + s_{2k-2} +1}{3^{n}} \right].\\
\end{align*}

\begin{figure}[h!]
\begin{center}
\includegraphics[scale=0.6]{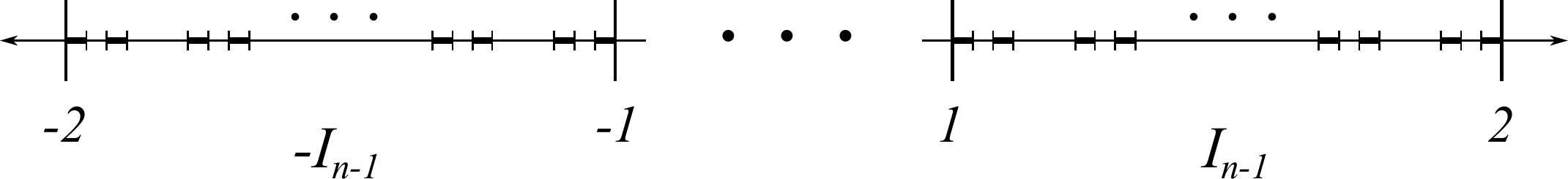}\\
\caption{\emph{ The closed subsets $I_{n-1}$ and $-I_{n-1}$}.}
 \label{Figure11}
\end{center}
\end{figure}

\begin{remark}
If we remove $\bigcup\limits_{k=0}^{2^{n-1}-1} \hat{I}_{n,k}$ and $\bigcup\limits_{k=0}^{2^{n-1}-1} -\hat{I}_{n,k}$ of the closed subsets $I_{n-1}$ and $-I_{n-1}$ accordingly, we hold the sets
\[
\begin{array}{ccccccl}
I_{n-1}\setminus\left(\bigcup\limits_{k=0}^{2^{n-1}-1} \hat{I}_{n,k}\right) &=& I_n   & =  & \bigcup\limits_{k=0}^{2^{n}-1} \left[1+\dfrac{s_k}{3^n},1+ \dfrac{s_k +1}{3^n}\right],\\
      && & =  & \bigcup\limits_{k=0}^{2^n -1} \left[\dfrac{3^n+s_k}{3^n}, \dfrac{3^n+s_k +1}{3^n}\right],\\
(-I_{n-1})\setminus\left(\bigcup\limits_{k=0}^{2^{n-1}-1}- \hat{I}_{n,k}\right) &=&-I_n  & =  & \bigcup\limits_{k=0}^{2^{n }-1} \left[-\left(1+ \dfrac{s_k +1}{3^n}\right), -\left(1+\dfrac{s_k}{3^n}\right)\right],\\
           & &      & =  & \bigcup\limits_{k=0}^{2^{n} -1} \left[- \dfrac{3^n+s_k +1}{3^n} , -\dfrac{3^n+ s_k}{3^n}\right].
\end{array}
\]
\end{remark}

On the other hand, for every $k\in\{0,\ldots,2^{n-1}-1\}$ the length of the closed intervals $\hat{I}_{n,k}$ and $-\hat{I}_{n,k}$ is $\dfrac{1}{3^n}$, then, we choose their respective middle points, which are given by
\begin{equation}\label{eq:21}
\begin{array}{ccccc}
\alpha_{n,k}      &=&1+\dfrac{1+s_{2k-1}}{3^n}+\dfrac{1}{3^n \cdot 2}&=&\dfrac{3^n\cdot 2 + 3 + 2s_{2k-1}}{3^n \cdot 2},\\ \alpha^{-1}_{n,k}&=&-1-\dfrac{1+s_{2k-1}}{3^n}-\dfrac{1}{3^n \cdot 2}&=&-\dfrac{3^n\cdot 2 + 3 + 2s_{2k-1}}{3^n \cdot 2}.\\
\end{array}
\end{equation}
%
%
By construction, the points $\alpha_{n,k}$ and $\alpha^{-1}_{n,k}$ are symmetrical with respect to the imaginary axis \emph{i.e.}, we have $\alpha^{-1}_{n,k}=-\alpha_{n,k}$. Then we let $C(f_{n,k}), C(f^{-1}_{n,k})$ be the two half-circles having centers $\alpha_{n,k}$ and $\alpha^{-1}_{n,k}$ correspondingly, and the same radius $r(n)=\dfrac{1}{3^n\cdot 2^2}$ (see the Figure \ref{Figure12}). They are given by the formula
\begin{equation}\label{eq:22}
\begin{array}{rcl}
C(f_{n,k})     &=&\{z\in \mathbb{H}:|2\cdot 3^n z-2\cdot(3^n\cdot 2 + 3 + 2s_{2k-1})|=1\},\\
C(f^{-1}_{n,k})&=&\{z\in \mathbb{H}:|-2\cdot 3^nz -2\cdot(3^n\cdot 2 + 3 + 2s_{2k-1})|=1\}.
\end{array}
\end{equation}

\begin{figure}[h!]
\begin{center}
\includegraphics[scale=0.5]{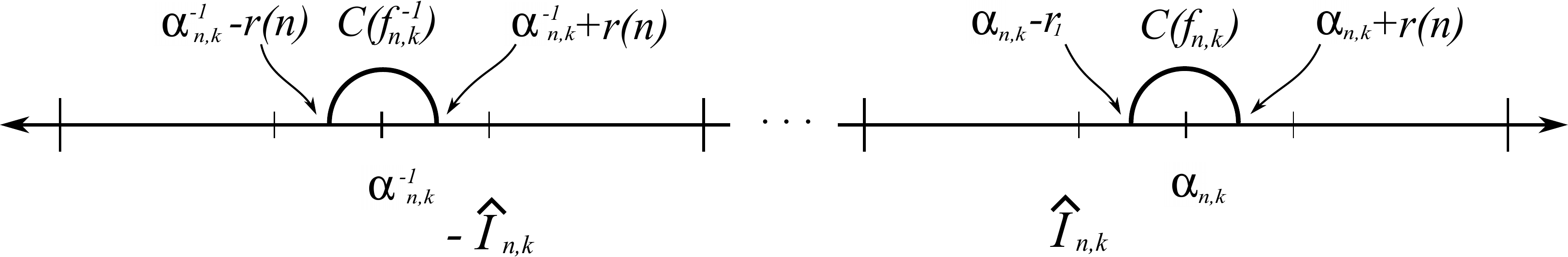}\\
\caption{\emph{Half circles $C(f_{n,k})$ and  $C(f^{-1}_{n,k})$  having centers $\alpha_{n,k}$ and $\alpha^{-1}_{n,k}$ respectively with  radius $r(n)=\frac{1}{3^n\cdot 2^2}$. For each $k\in\{0,\ldots, 2^{n-1}-1\}$, the points $\alpha_{n,k}+r(n)$ and $\alpha_{n,k}-r(n)$ of $\hat{I}_{n,k}$ are the end points of the half-circle $C(f_{n,k})$. Equally, the points $\alpha_{n,k}^{-1}+r(n)$ and $\alpha_{n,k}^{-1}-r(n)$ of $-\hat{I}_{n,k}$ are the end points of the half-circle $C(f^{-1}_{n,k})$.}}
 \label{Figure12}
\end{center}
\end{figure}


Now, for each $k\in \{0,\ldots,2^{n-1}-1\}$ we must calculate the M\"obius transformation and its respective inverse
\begin{equation}\label{eq:23}
\begin{array}{rcl}
f_{n,k}(z) & = & \dfrac{a_{n,k}z+b_{n,k}}{c_{n,k}z+d_{n,k}},\\
f^{-1}_{n,k}(z) &= &\dfrac{d_{n,k}z-b_{n,k}}{-c_{n,k}z+a_{n,k}},
\end{array}
\end{equation}

which have as isometric circles $C(f_{n,k})$ and $C(f^{-1}_{n,k})$, respectively.
%
%
Using the remark \ref{r:2.9} we have
\[
\begin{array}{ccccl}
c_{n,k} & = & \dfrac{1}{r(n)} &=&3^n\cdot 2^2 ,              \\
d_{n,k} & = & - c_{n,k}\cdot \alpha_{n,k}& = &-2\cdot(3^n\cdot 2 + 3 + 2s_{2k-1}), \\
a_{n,k} & = & c_{n,k} \cdot \alpha^{-1}_{n,k}&=& -2\cdot(3^n\cdot 2 + 3 + 2s_{2k-1}).\\
\end{array}
\]
Now, we substitute these values in the equation $a_{n,k}\cdot d_{n,k}- c_{n,k}\cdot b_{n,k}=1$ and computing we hold
\[
\begin{array}{ccl}
b_{n,k} & = & \dfrac{2^2\cdot(3^n\cdot 2 + 3 + 2s_{2k-1})^2 -1 }{3^n \cdot 2^2}.
\end{array}
\]
Thus, for each $k\in\{0,\ldots, 2^{n-1}-1\}$ we know the explicit  form of the M\"obius transformations of equations \ref{eq:23}  \emph{i.e.},
\[
\begin{array}{rcl}
f_{n,k}(z) & = &\dfrac{-2\cdot(3^n\cdot 2 + 3 + 2s_{2k-1})z+\dfrac{2^2\cdot(3^n\cdot 2 + 3 + 2s_{2k-1})^2 -1 }{3^n \cdot 2^2}}{3^n\cdot 2^2z-2\cdot(3^n\cdot 2 + 3 + 2s_{2k-1})},\\
f^{-1}_{n,k}(z) & =&\dfrac{-2\cdot(3^n\cdot 2 + 3 + 2s_{2k-1})z-\dfrac{2^2\cdot (3^n\cdot 2 + 3 + 2s_{2k-1})^2 -1 }{3^n \cdot 2^2}}{-3^n\cdot 2^2 z-2\cdot(3^n\cdot 2 + 3 + 2s_{2k-1})}.\\
\end{array}
\]
Finally, we define the sets $J_n$ and $\mathcal{C}_n$ composed by M\"{o}bius transformations and half-circles, respectively, just as:
\begin{equation}\label{eq:24}
\begin{array}{rcl}
J_n & := & \{f_{n,k}(z), \, f^{-1}_{n,k}(z): k\in\{0,\ldots,2^{n-1}-1\}\},\\
\mathcal{C}_n & := &\{C(f_{n,k}), \, C(f^{-1}_{n,k}): k\in\{0,\ldots,2^{n-1}-1\}\}.\\
\end{array}
\end{equation}

We note that by construction each M\"{o}bius transformation of $J_n$ is hyperbolic and the half-circles of $\mathcal{C}_n$ are pairwise disjoint.

\begin{remark}\label{r:3.7}
If we consider the classical Schottky group of rank one $\Gamma_n$, which is generated by the set $J_n$. Using the same ideas as the Infinite Loch Ness monster case, it is easy to check that the quotient space $S_n:= \mathbb{H}/ \Gamma_n$ (see the equation \ref{eq:13}) is a hyperbolic surface having infinite area and homeomorphic to the sphere punctured by $2n$ points.
\end{remark}

From the previous recursive construction of M\"{o}bius transformations and half-circles (see the equation \ref{eq:24}), we define the sets
\begin{equation}\label{eq:25}
\begin{array}{rcl}
J & := & \bigcup\limits_{n\in \N} J_n,\\
\mathcal{C} & := & \bigcup\limits_{n\in N} \mathcal{C}_n,\\
\end{array}
\end{equation}
and we denote $\Gamma$ as the subgroup of $PSL(2,\mathbb{R})$ generated by the set $J$. We observe that by construction each M\"{o}bius transformation of $J$ is hyperbolic and the half-circles of $\mathcal{C}$ are pairwise disjoint.

\vspace{2mm}
\noindent \textbf{Step 2. The group $\Gamma$ is a Fuchsian group.} In order to $\Gamma$ will be a Geometric Schottky group, we shall define a Schottky description for it. Hence, by the proposition \ref{p:2.16} we will conclude that $\Gamma$ is Fuchsian.

We note that the elements belonging to the family $J$ in the equations \ref{eq:25} can be indexed by a symmetric subset of $\mathbb{Z}$. Merely, we consider $P:=\{p_n\}_{n\in\mathbb{N}}$ the subset of all the prime numbers. Then it is easy to check that the map $\psi: J\to \mathbb{Z}$ defined by
\[
f_{n,k}(z)\mapsto p_n^k, \quad f_{n,k}^{-1}(z)\mapsto -p_n^k, \quad \text{(for every $n\in\mathbb{N}$ and $k\in\{0,\ldots,2^{n-1}-1\}$),}
\]
is well-defined and injective. The image of $J$ under $\psi$ is a symmetric subset of $\mathbb{Z}$, which we denote as $I$. Given that for each element  $k$ belonged to $I$ there is a unique transformation $f\in J$ such that $\psi(f)=k$, then we label the map $f$ as $f_k$ and its respective isometric circle as $C(f_k)$. Hence, we re-write the sets $J$ and $\mathcal{C}$ as follows
\[
\begin{array}{rcl}
J & := &\left\{f_i(z)\right\}_{i\in I},\\
\mathcal{C} & := &\{C(f_i)\}_{i\in I}.
\end{array}
\]



On the other hand, we define the set $\{A_i\}_{i\in I}$ where $A_i$ is the straight segment in the real line $\mathbb{R}$ whose ends points coincide to the ends points at the infinite of the half-circle $C(f_{i})$ (see the equation \ref{eq:25}). In other words, it is the straight segment joining the ends points of $C(f_i)$ the isometric circle of $f_i(z)$.

We claim that the pair $(\{A_i\}, \{f_i\})_{i\in I}$ is a Schottky description. By the inductive construction  of the family $J:=\{f_i\}_{i\in I}$ described above, it is immediate that the pair $(\{A_i\},\{f_i\})_{i\in I}$ satisfies the conditions 1 to 4 of definition \ref{d:2.14}. Thus, we must only prove that the condition 5 of definition \ref{d:2.14} is done.

For any $i\in I$, we consider the transformation $f_i$ and its respective isometric circle $C(f_i)$ . Now, we denote as $\alpha_i$ and $r_i$ the center and radius of $C(f_i)$. Hence, we define the open strip associated to $f_i$ as follows (see the Figure \ref{Figure5})
\[
S_i:=\{z\in\mathbb{H}\, :\,  \alpha_{i}-2r_{i}\, < \, Re(z)\, < \, \alpha_{i}+2r_{i}\, \}.
\]
We note that $C(f_i)\subset S_i$ and for any two different transformations $f_i\neq f_j\in J$ its respective open strips associated $S_i$ and $S_{j}$ are disjoint. Further, by construction of the family  $\mathcal{C}$ (see the equation \ref{eq:25}) there exist a transformation $f_k\in J$ such that $r_k$, the radius of its isometric circle $C(f_k)$ satisfies  $r_i\leqslant r_k$ for all $i\in I$.




Moreover, by the remark \ref{r:2.10} it follows that $B_{\epsilon}^k$ the closed hyperbolic $\epsilon$-neighborhood of the half-circle $C(f_k)$  belongs to the open strip $S_k$, choosing $\epsilon$ less than $\dfrac{r_k}{2}$. Since, for all $i\in I$ we have $| \alpha_k -\alpha_i |> (r_k + r_i)$,  by the lemma \ref{l:2.11} for every $\epsilon < \dfrac{\max\{r_k,r_i\}}{2}=\dfrac{r_k}{2}$ the closed hyperbolic $\epsilon$-neighborhoods $B^k_{\epsilon}$ and $B^i_{\epsilon}$ of the half circle $C(f_k)$ and $C(f_i)$ are disjoint. Even more, this lemma assures that $B^i_{\epsilon}\subseteq S_i$, for all $i\in I$. This implies that condition 5 is done. Therefore, we conclude that the pair $(\{A_i\}, \{f_i\})_{i\in I}$ is a Schottky description.






\vspace{2mm}
\noindent \textbf{Step 3. Holding the surface called the Cantor tree.} The Geometric Schottky group $\Gamma$ acts freely and properly discontinuously on the open subset $\mathbb{H}-K$, where the subset $K\subset \mathbb{H}$ is defined as in the equation \ref{eq:12}. In this case, we note that $K$ is empty because the intersection of any two different elements of $\mathcal{C}$ is empty. Then the quotient space
\begin{equation}\label{eq.26}
S:= \mathbb{H}/\Gamma
\end{equation}
is well-defined and through the projection map
\begin{equation}\label{eq:27}
\pi:\mathbb{H}\to S, \quad z\mapsto[z],
\end{equation}
it is a hyperbolic surface. To end the proof we shall show that $S$ is the Cantor tree \emph{i.e.}, its ends space is homeomorphic to the Cantor set having all ends planar. In other words, $S$ is homeomorphic to the Cantor tree. To prove this, we will describe the ends space of $S$ using the property of $\sigma$-compact of $S$ and showing that there is a homeomorphism $F$ from the ends spaces of the Cantor binary tree $Ends(T2^\omega)$ onto  the ends space $Ends(S)$. 

The following remark is necessary.
\begin{remark}\label{r:3.8}
We let $F(\Gamma)$ be the standard fundamental domain of  $\Gamma$, as such
\begin{equation}\label{eq:28}
F(\Gamma):=\bigcap_{i\in I} \overline{\hat{C}(f_i)}\subset \mathbb{H}.
\end{equation}
Regarding the proposition \ref{p:2:17} it is a fundamental domain for $\Gamma$ having the  following properties.
\begin{enumerate}
\item It is connected and locally finite having infinite hyperbolic area. Further, its boundary is composed by the family of half-circle $\mathcal{C}$ (see the equation \ref{eq:25}). In other words, it consists of infinitely many hyperbolic geodesic with ends points at infinite and mutually disjoint.

\item It is  a non-compact Dirichlet region and the quotient $F(\Gamma)/\Gamma$ is homeomorphic to $S$, then the quotient space $S$ is also a non-compact hyperbolic surface with infinite hyperbolic area (see \cite[Theorem 14.3 p. 283]{KS2}). We note that by construction, the surface $S$ does not have genus \emph{i.e.}, its ends are planar.
\end{enumerate}
\end{remark}

Since surfaces are $\sigma$-compact space, for $S$ there is an exhaustion of $S=\bigcup_{n\in\mathbb{N}} K_n$ by compact sets whose complements define the  ends spaces of the surface $S$. More precisely;

\vspace{1.5mm}
\noindent \textbf{For $n=1$.} We consider the radius $r(1)=\dfrac{1}{3\cdot 2^2}$ given in the recursive construction of $\Gamma$ and define the compact subset $\tilde{K}_1$ of the hyperbolic plane $ \mathbb{H}$ as follows
$$
\tilde{K}_1:=\{ z\in \mathbb{H}: -2 \leq Re(z) \leq 2, \, \text{ and } \, r(1) \leq Im (z) \leq 1\}.
$$
The image of the intersection $\tilde{K}_1\cap F(\Gamma)$ under the projection map $\pi$ described in the equation \ref{eq:27}
$$
\pi ({\tilde{K}_1 \cap F(\Gamma)}):=K_1\subset S,
$$
is a compact subset of $S$. By definition of $\tilde{K}_1$ the difference $S\setminus K_1$ consists of two connected components whose closure in $S$ is non-compact, and they have compact boundary. Hence, we can write
\[
S\setminus K_1:=U_0 \sqcup U_1.
\]

\begin{remark}\label{r:3.9}
The set of connected components of $S\setminus K_1$ and the set defined as $2^1:=\{0,1\}$ are equipotent. In other words, they are in one-to-one relation.
\end{remark}

\vspace{1mm}
\noindent \textbf{For $n=2$.} We consider the radius $r(2)=\dfrac{1}{3^2 \cdot2^2}$ given in the recursive construction of $\Gamma$ and define the compact subset $\tilde{K}_2$ of the hyperbolic plane $\mathbb{H}$ as follows
$$
\tilde{K}_2:=\{ z\in \mathbb{H}: -3 \leq Re(z) \leq 3, \, \text{ and } \, r(2)\leq Im(z) \leq 2\}.
$$
By construction $\tilde{K}_1\subset \tilde{K}_2$ and the image of the intersection $\tilde{K}_2\cap F(\Gamma)$ under the projection map $\pi$ (see the equation \ref{eq:27})
$$
\pi ({\tilde{K}_2 \cap F(\Gamma)}):=K_2\subset S,
$$
is a compact subset of $S$ just as $K_1\subset K_2$. By definition of $\tilde{K}_2$ the difference $S\setminus K_2$ consists of $2^2$ connected components whose closure in $S$ is non-compact, and they have compact boundary. Moreover, for every $l$ in the set $2^1$ there exist exactly two connected components of $S\setminus K_2$ contained in $U_l\subset S\setminus K_1$. Hence, we can write
\[
S\setminus K_2:=U_{0,\, 0}\sqcup U_{0,\, 1} \sqcup U_{1,\, 0}\sqcup U_{1,\, 1}=\bigsqcup\limits_{l\in 2^1 } (U_{l,\, 0}\sqcup U_{l,\, 1}),
\]
being  $U_{l, \, 0}, U_{l, \,1}\subset U_l$ for every $l\in 2^1$.

\begin{remark}\label{r:3.10}
The set of connected components of $S\setminus K_2$ and the set defined as $2^2:=\prod\limits_{i=1}^{2}\{0,1\}_i$ are equipotent. In other words, they are in one-to-one relation.
\end{remark}

\vspace{1mm}
Following the recursive construction above, for $n$ we consider the radius $r(n)=\dfrac{1}{3^n \cdot2^2}$ given in the recursive construction of $\Gamma$ and define the compact subset $\tilde{K}_n$ of the hyperbolic plane $\mathbb{H}$ as follows
$$\tilde{K}_n:=\{ z\in \mathbb{H}: -(n+1) \leq Re(z) \leq n+1, \, \text{ and } \, r(n)\leq Im(z) \leq n\}.$$
By construction $\tilde{K}_{n-1}\subset \tilde{K}_n$ and the image of the intersection $\tilde{K}_n\cap F(\Gamma)$ under the projection map $\pi$ (see the equation \ref{eq:27})
$$\pi ({\tilde{K}_n \cap F(\Gamma)}):=K_n\subset S,$$
is a compact subset of $S$ just as $K_{n-1}\subset K_n$. By definition of $\tilde{K}_n$ the difference $S\setminus K_n$ consists of $2^n$ connected components whose closure in $S$ is non-compact, and they have compact boundary. Moreover, for every $l$ in the set $2^{n-1}$ there exist exactly two connected components of $S\setminus K_{n-1}$ contained in $U_l \subset S\setminus K_{n-1}$. Hence, we can write
\[
S\setminus K_n:= \bigsqcup_{l\in 2^{n-1}} (U_{l,\,0} \sqcup U_{l,\, 1}),
\]
being $U_{l,\, 0}, U_{l,\, 1}\subset U_l$ for every $l\in 2^{n-1}$.

\begin{remark}\label{r:3.11}
The set of connected components of $S\setminus K_n$ and the set defined as $2^n:=\prod\limits_{i=1}^{n}\{0,1\}_i$ are equipotent. In other words, they are in one-to-one relation.
\end{remark}

This recursive construction induces the desired numerable family of increasing compact subset $\{K_n\}_{n\in\mathbb{N}}$ covering the surface $S$, 
$$
S=\bigcup_{n\in\mathbb{N}} K_n.
$$
Thus, the ends space of $S$ is composed by all sequences $(U_{l_n})_{n\in \N}$ such that $U_{l_n} \subset S\setminus K_n$ and $U_{l_n}\supset U_{l_{n+1}}$, for each $n\in\mathbb{N}$. Further, $l_n\in 2^n$, $l_{n+1}\in 2^{n+1}$ such that $\pi_{i}(l_n)=\pi_{i}(l_{n+1})$ for every $i\in\{1,\ldots,n\}$, (see subsection Cantor binary tree).

It is easy  to check that the  map
\[
\begin{array}{ccc}
f :  Ends(T2^\omega) \to  Ends(S), &  &          (v_i)_{i\in\mathbb{N}}  \mapsto  [U_{v_i}]_{i\in \N},
\end{array}
\]
is well-defined. We now prove that is a closed continuous bijection, hence a homeomorphism.

\vspace{1.5mm}
\textbf{Injectivity.} Consider two different infinite sequences $(v_{n})_{n\in\mathbb{N}}\neq (v_{n\in\mathbb{N}}^{'})$ of the graph $T2^{\omega}$. Then there exist $N\in\mathbb{N}$ such that for all $m> N $ we have $v_m \neq v^{'}_m$. This implies that $U_{v_m}\cap U_{v_m^{'}}=\emptyset$, hence $f((v_n)_{n\in\mathbb{N}})=[U_{v_n}]_{n\in\mathbb{N}}\neq [U_{v_n^{'}}]_{n\in\mathbb{N}}=f((v_n^{'})_{n\in\mathbb{N}})$.

\vspace{1.5mm}
\textbf{Surjectivity.} Consider an end $[W_n]_{n\in\mathbb{N}}$ of $Ends(S)$, then for each $l \in \mathbb{N}$ there is a $W_{n_l}$ of the sequence $(W_n)_{n\in\mathbb{N}}$ such that $K_l \cap W_{n_l}=\emptyset$. Hence, $W_{n_l}$ is contained to $U_{s_l}$ a connected component of $S\setminus K_l$ such that $s_l\in 2^l$. Given that $W_{n_l}$ is a connected open such that $W_{n_{l-1}}\supseteq W_{n_l}$ then $U_{s_{l-1}}\supset U_{s_l}$ for every $l\in\mathbb{N}$ \emph{i.e.}, the sequences $(W_n)_{n\in\mathbb{N}}$ and $(U_{s_l})_{l\in\mathbb{N}}$ define the same end in $S$ \emph{i.e.}, $[W_n]_{n\in\mathbb{N}}=[U_{s_l}]_{l\in\mathbb{N}}$. This implies that the sequence $(s_{l})_{l\in\mathbb{N}}$ is an infinite path in $T2^{\omega}$ defining an end of $T2^{\omega}$ and $f$ sends the sequence $(s_{l})_{l\in\mathbb{N}}$ into the ends $[W_n]_{n\in\mathbb{N}}=[U_{s_l}]_{l\in\mathbb{N}}$.

\vspace{1.5mm}
\textbf{The map $f$ is continuous.} Given an end $(v_n)_{n\in \mathbb{N}}\in Ends(T2^{\omega})$ and any open subset $W\subset S$ whose boundary is compact such that the basic open $W^{\ast}$ contains the point $f((v_n)_{n\in\mathbb{N}})=[U_{v_n}]_{n\in\mathbb{N}}$, we must prove that there is an open subset $Z\subset T2^{\omega}$ such that the basic open $Z^{\ast}$ contains the point $(v_n)_{n\in\mathbb{N}}$ and $f(Z^{\ast})\subset W^{\ast}$. Hence $[U_{v_n}]_{n\in\mathbb{N}}\in W^{\ast}$ and there is $m\in\mathbb{N}$ such that $U_{v_m}\subset W$. If we remove the vertex $v_m$ of the graph $T2^{\omega}$ the subset  $T2^{\omega}\setminus\{v_m\}$ consists of exactly three open connected components whose closure in $T2^{\omega}$ are non-compact, but has compact boundary. We denote as $Z$ the connected component of $T2^{\omega}$ having the infinite sequence $(v_{m+i})_{i\in\mathbb{N}}$. Then the subset $Z^{\ast}\subset Ends(T2^{\omega})$ is the required basic open, which contains the points $(v_n)_{n\in\mathbb{N}}$ and $f(Z^{\ast})\subset W^{\ast}$.

The map $f$ is closed because it is a continuous bijective  from a compact space into a Hausdorff space (see \cite[Theorem 2.1 p.226]{Dugu}. Hence $f$ is a homeomorphism.

\qed

\begin{corollary}\label{c:3.10}
For every $n\in\mathbb{N}$, there is a classical Schottky subgroup $\Gamma_n$ of $\Gamma$ having rank $n$, such that the quotient space $S_n:=\mathbb{H}/\Gamma_n$ is a hyperbolic surface homeomorphic to the sphere punctured by $2n$ points. The fundamental group of $S_n$ is isomorphic to $\Gamma_n$. Furthermore, $S$ has infinite hyperbolic area.
\end{corollary}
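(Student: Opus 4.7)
The plan is to exploit the Schottky description $(\{A_i\},\{f_i\})_{i\in I}$ of $\Gamma$ produced in Step 2 of the proof of Theorem \ref{T:0.2}. Since any subfamily of a Schottky description is again a Schottky description, selecting $n$ of the generators yields a classical Schottky subgroup of rank $n$, whose quotient can then be analyzed directly from its fundamental domain.

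First I would fix $n\in\mathbb{N}$ and pick any $n$ distinct indices $i_1,\ldots,i_n\in I$. Setting $\Gamma_n:=\langle f_{i_1},\ldots,f_{i_n}\rangle\leqslant\Gamma$, the pair $(\{A_{i_j}\}_{j=1}^n,\{f_{i_j}\}_{j=1}^n)$ inherits conditions (1)--(5) of Definition \ref{d:2.14} from the ambient Schottky description of $\Gamma$. Therefore $\Gamma_n$ is a Geometric Schottky group; being finitely generated by hyperbolic isometries whose isometric circles $C(f_{i_j}),C(f_{i_j}^{-1})$ are pairwise disjoint in $\overline{\mathbb{H}}$, it is in fact a classical Schottky group of rank $n$ in the sense of Section \ref{section2.5}, and is Fuchsian by Proposition \ref{p:2.16}.

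Next I would identify the topology of $S_n:=\mathbb{H}/\Gamma_n$. Its standard fundamental domain
\[
F(\Gamma_n)=\bigcap_{j=1}^{n}\Bigl(\overline{\hat{C}(f_{i_j})}\cap\overline{\hat{C}(f_{i_j}^{-1})}\Bigr)
\]
is a planar region of $\mathbb{H}$ bounded by $2n$ disjoint hyperbolic geodesics whose endpoints all lie on the ideal boundary $\mathbb{R}$, and each $f_{i_j}$ glues $C(f_{i_j})$ to $C(f_{i_j}^{-1})$. Mimicking the compact-exhaustion argument in Step 3 of the proof of Theorem \ref{T:0.2} with only finitely many stages, I would show that $S_n$ has exactly $2n$ planar ends and no genus, so $S_n$ is homeomorphic to the sphere with $2n$ punctures (the content already announced in Remark \ref{r:3.7}). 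Because the fixed-point set $K$ of \eqref{eq:12} for $\Gamma_n$ is empty, the action is free and properly discontinuous; Theorem \ref{T:uni} then yields $\pi_1(S_n)\cong\Gamma_n$.

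Finally, the infinite hyperbolic area of $S$ is immediate from Remark \ref{r:3.8}, which records that the standard fundamental domain $F(\Gamma)$ of \eqref{eq:28} has infinite hyperbolic area. The main technical point to watch is the no-handle claim for $S_n$: this relies crucially on the fact that every endpoint of every $C(f_{i_j})$ lies on $\partial\mathbb{H}$ and hence is never identified with any point of $\mathbb{H}$ by $\Gamma_n$, so each pair of glued half-circles produces a pair of punctures rather than closing up into a handle.
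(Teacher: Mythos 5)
Your overall strategy is the same as the paper's: extract a finite subfamily of the Schottky description of $\Gamma$, observe that it generates a classical Schottky subgroup, identify the quotient from its standard fundamental domain, invoke Theorem \ref{T:uni} for $\pi_1(S_n)\cong\Gamma_n$, and quote Remark \ref{r:3.8} for the infinite area of $S$; the paper itself just sets $\Gamma_n:=\langle J_n\rangle$ and says ``proceed verbatim'' via Remark \ref{r:3.7}. One minor point first: since the index set $I$ is symmetric, ``any $n$ distinct indices'' must exclude taking both $i$ and $-i$, for $f_{-i}=f_i^{-1}$ generates the same cyclic group as $f_i$ and the rank would drop below $n$; you need $n$ transformations no two of which are mutually inverse (the paper sidesteps this by taking the whole level-$n$ family $J_n$).

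The serious problem is the step ``I would show that $S_n$ has exactly $2n$ planar ends'', together with the closing heuristic that ``each pair of glued half-circles produces a pair of punctures''. This is not merely unproved; it is false for $n\ge 2$. If $\Gamma_n$ is free of rank $n$ (which your ping-pong/Schottky setup does give), and $S_n$ is an orientable surface of genus $h$ with $e$ ends and $\pi_1(S_n)\cong\Gamma_n$, then $2h+e-1=n$, so $e\le n+1<2n$ once $n\ge 2$; in particular no rank-$n$ subgroup can have quotient a $2n$-punctured sphere, whose fundamental group is free of rank $2n-1$. Concretely, for two generators the four half-circles paired symmetrically about the imaginary axis yield a pair of pants (three funnel ends), not a four-times punctured sphere: gluing $C(f_{i})$ to $C(f_{i}^{-1})$ identifies ideal boundary arcs of the fundamental domain, and the ends correspond to cycles of those arcs, not to the half-circles themselves; the count $2n$ is only accidentally correct at $n=1$, the cylinder of Remark \ref{r:3.5}. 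So your compact-exhaustion argument cannot deliver the stated conclusion; a correct treatment must count ends via cycles of ideal arcs (equivalently via the Euler characteristic), and then either take $2n-1$ generators, no two mutually inverse, to obtain a $2n$-punctured sphere whose fundamental group has rank $2n-1$, or keep rank $n$ and obtain an $(n+1)$-punctured sphere. Note that the paper's own choice $\Gamma_n=\langle J_n\rangle$ consists of $2^{n-1}$ generator pairs and yields $2^{n-1}+1$ ends, so the corollary's bookkeeping is itself delicate; but as written, your proposed proof of the $2n$-puncture claim is the step that fails.
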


Indeed, we consider $\Gamma_n$ as the Fuchsian group generated by the set $J_n$ (see the equation \ref{eq:24}) and proceed verbatim.

On the other hand, from Theorem \ref{T:uni} we can conclude that:

\begin{corollary}
The fundamental group of the Cantor tree is isomorphic to $\Gamma$.
\end{corollary}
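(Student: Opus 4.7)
The plan is to apply the Uniformization Theorem (Theorem \ref{T:uni}, item (4)) to the surface $S=\mathbb{H}/\Gamma$ constructed in the proof of Theorem \ref{T:0.2}. That item asserts that whenever $\widetilde{S}$ is a universal covering surface of a Riemann surface $S$, the corresponding cover group is isomorphic to $\pi_1(S)$, so the whole task reduces to verifying that $\mathbb{H}$ is actually the universal cover of $S$ via the quotient map $\pi$ appearing in equation \eqref{eq:27}.

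First, I would recall from the proof of Theorem \ref{T:0.2} that $\Gamma$ is a Geometric Schottky group (hence Fuchsian, by Proposition \ref{p:2.16}), and in this particular case the set $K\subset\mathbb{H}$ of points fixed by non-identity elements of $\Gamma$ is empty, because the half-circles in $\mathcal{C}$ are pairwise disjoint. Thus $\Gamma$ acts freely and properly discontinuously on $\mathbb{H}$, and $\pi\colon\mathbb{H}\to S$ is a holomorphic covering map with no branching. Since $\mathbb{H}$ is simply connected and, via the Cayley transform, conformally equivalent to the unit disc $\Delta$, it fits the role of $\widetilde{S}$ required by Theorem \ref{T:uni}.

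Next I would invoke Theorem \ref{T:uni}(4) to deduce that $\Gamma$, as the cover group of $\pi\colon\mathbb{H}\to S$, is isomorphic to $\pi_1(S)$. Finally, Theorem \ref{T:0.2} provides a homeomorphism between $S$ and the Cantor tree; as the fundamental group is a homeomorphism invariant, this yields $\pi_1(S)\cong\pi_1(\text{Cantor tree})$, and combining the two isomorphisms gives $\Gamma\cong\pi_1(\text{Cantor tree})$.

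I do not expect any real obstacle: the proof is a packaging of results already established in the paper. The only point that deserves an explicit mention is the verification that the fixed-point set $K$ is empty for this $\Gamma$ (so that $\pi$ is an honest covering, not merely an orbifold projection); this was noted in the discussion preceding equation \eqref{eq.26} and is what allows a direct application of item (4) of the Uniformization Theorem rather than an orbifold version.
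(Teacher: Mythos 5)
Your proposal is correct and follows the same route as the paper, which derives this corollary directly from Theorem \ref{T:uni} applied to the covering $\pi\colon\mathbb{H}\to S=\mathbb{H}/\Gamma$ together with the homeomorphism $S\cong$ Cantor tree established in Theorem \ref{T:0.2}. Your explicit check that $K=\emptyset$ (so the action is free and $\pi$ is a genuine covering rather than an orbifold projection) is a worthwhile detail that the paper leaves implicit.
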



\subsection{Proof Theorem \ref{T:0.3}}\label{section3.3}

\hfill \break

 \textbf{Step 1. Building the group $\Gamma$.} 

\vspace{2mm}
\textbf{For $n=1$. Building the set $J_1$ containing infinitely countable M\"obius transformation and the set $\mathcal{C}_1$ composed by its respective isometric circles.} We consider the closed interval $I_0=[1,2]$ and its symmetric with respect to the imaginary axis $-I_0=[-2,-1]$ (see the theorem \ref{T:2.1}). By the remark \ref{r:2.2} we have that $\hat{I}_{1,1}=\left[\dfrac{4+s_0}{3}, \dfrac{3+ s_1}{3}\right]$ and $-\hat{I}_{1,1}=\left[-\dfrac{3+ s_1}{3}, -\dfrac{4+s_0}{3} \right]$ are the middle thirds of the closed intervals $I_0$ and $-I_0$, respectively. Given that $s_0=0$ and $s_1=2$ then we have $\hat{I}_{1,1}=\left[\dfrac{4}{3}, \dfrac{5}{3}\right]$ and $-\hat{I}_{1,1}=\left[ -\dfrac{5}{3}, -\dfrac{4}{3}\right]$. See the Figure \ref{Figure13}.
\begin{figure}[h!]
\begin{center}
\includegraphics[scale=0.5]{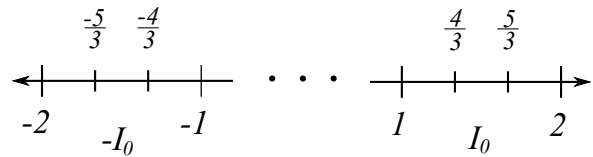}\\
\caption{\emph{The middle thirds of the closed intervals $[1,2]$ and $[-2,-1]$}.}
 \label{Figure13}
\end{center}
\end{figure}

We remark that the length of the closed intervals $\hat{I}_{1,1}$ and $-\hat{I}_{1,1}$ is $\dfrac{1}{3}$ and their respective middle points are $\alpha_{1,1}$ and $\alpha^{-1}_{1,1}$ as in the equation \ref{eq:17}.
Then we let $C(f_{1,1}), C(f_{1,1}^{-1})$ be two half-circles having centers $\alpha_{1,1}$ and $\alpha_{1,1}^{-1}$ respectively, and the same radius $r(1)=\dfrac{1}{6\cdot 3 \cdot 2}$ (see the Figure \ref{Figure14}). They are given by the formula
\begin{equation}\label{eq:29}
\begin{array}{rl}
C(f_{1,1})      & =\left\{z\in \mathbb{H}: \left| 6\cdot 3 \cdot 2 z -6\cdot(9+2 s_0)\right|^{-2}=1\right\},\\
                & =\left\{z\in \mathbb{H}: \left| 36 z - 54 \right|^{-2}=1\right\},\\
C(f_{1,1}^{-1}) & =\left\{z\in \mathbb{H}: \left| - 6\cdot 3 \cdot 2 z-6\cdot(9+2 s_0)\right|^{-2}=1\right\},\\
                & =\left\{z\in \mathbb{H}: \left| -36 z - 54\right|^{-2}=1\right\}.\\
\end{array}
\end{equation}

We note that the half-circles given by the equations \ref{eq:22} and \ref{eq:29} are different because their radius are distinct. Moreover, by construction the centers of the half-circles $C(f_{1,1})$ and $C(f_{1,1}^{-1})$ are symmetrical with respect to the imaginary axis, \emph{i.e.}, we have $-\alpha_{1,1}=\alpha^{-1}_{1,1}$. The points $\alpha_{1,1}+r(1)$ and $\alpha_{1,1}-r(1)$ of $\hat{I}_{1,1}$ are the two end points at infinite of $C(f_{1,1})$. Similarly, the points $\alpha_{1,1}^{-1}+r(1)$ and $\alpha_{1,1}^{-1}-r(1)$ of $-\hat{I}_{1,1}$ are the two end points at infinite of $C(f_{1,1}^{-1})$.

\begin{remark}
As the isometric circles are not necessary to give the explicit expression of the group $\Gamma$, from now on we will avoid writing the equations of $C(f_{i,j})$ for the rest of the proof.
\end{remark}

Now, we calculate the M\"obius transformation and its respective inverse
\begin{equation}\label{eq:30}
\begin{array}{ccl}
f_{1,1}(z)      &=&\dfrac{a_{1,1}z+b_{1,1}}{c_{1,1}z+d_{1,1}},\\
f_{1,1}^{-1}(z) &=&\dfrac{d_{1,1}z-b_{1,1}}{-c_{1,1}z+a_{1,1}},
\end{array}
\end{equation}
which has as isometric circles $C(f_{1,1})$  and $C(f^{-1}_{1,1})$, respectively. By the remarks \ref{r:2.9} we have
\[
\begin{array}{cclcl}
a_{1,1} & = &-6\cdot(9 + 2 s_0) &= &-54,\\
c_{1,1} & = & 6\cdot 3\cdot 2 &= &36,\\
d_{1,1} & = &-6\cdot(9+2 s_0) &= &-54.\\
\end{array}
\]
Now, we substitute these values in the equation $a_{1,1}\cdot d_{1,1}-b_{1,1}\cdot c_{1,1}=1$ and computing we hold
\[
b_{1,1} = \dfrac{6^2\cdot(9+2 s_0)^2 -1 }{6\cdot 3\cdot 2}= \dfrac{2915}{36}.
\]
Thus, we know the explicit form of the M\"obius transformations of the equation \ref{eq:30}
\[
\begin{array}{cclcl}
f_{1,1}(z)     &=&\dfrac{-6\cdot(9 + 2 s_0)z+ \dfrac{6^2\cdot(9+2 s_0)^2 -1 }{6\cdot3\cdot 2}}{6\cdot 3\cdot 2z -6\cdot(9+2 s_0)}&=&\dfrac{-54z+\dfrac{2915}{36}}{36z-54},\\
f^{-1}_{1,1}(z)&=&\dfrac{-6\cdot(9 + 2 s_0)z- \dfrac{6^2\cdot(9+2 s_0)^2 -1 }{6\cdot 3\cdot 2}}{-6\cdot 3\cdot 2z -6\cdot(9+2 s_0)}&=&\dfrac{-54z-\dfrac{2915}{36}}{-36z-54}.
\end{array}
\]

Then we define the sets $J_{1,1}$ and $C_{1,1}$ composed by M\"{o}bius transformations and half-circles, as such
\begin{equation}\label{eq:31}
\begin{array}{ccl}
J_{1,1}           & :=& \{f_{1,1}(z), \, f^{-1}_{1,1}(z)\}, \\
\mathcal{C}_{1,1} & := &\{C(f_{1,1}), \, C(f^{-1}_{1,1})\}.\\
\end{array}
\end{equation}

We remark that by construction each M\"{o}bius transformation of $J_{1,1}$ is hyperbolic and the half-circle of $C_{1,1}$ are pairwise disjoint. We let $\Gamma_{1,1}$ be the classical  Schottky group of rank one generated by $J_{1,1}$. Using the same ideas as the Infinite Loch Ness monster case, it is easy to check that the quotient space $\mathbb{H}/\Gamma_{1,1}$ is a hyperbolic surface homeomorphic to the cylinder.

\begin{remark}
Now, we shall build eight sequences of half-circles whose radius converge to zero, each sequence will have associated a suitable sequence of M\"{o}bius transformations. Two of those sequences will be in the second sixth closed subinterval of $\hat{I}_{1,1}$ and two more in the fifth sixth closed subinterval of $-\hat{I}_{1,1}$. Likewise, two sequences will be in the fifth sixth closed subinterval of $\hat{I}_{1,1}$ and the last two in the second sixth closed subinterval of $-\hat{I}_{1,1}$. In other words, it will be sequences of half-circles converged to zero at the left and at the right of $C(f_{1,1})$ and $C(f^{-1}_{1,1})$.
\end{remark}

Now, we shall build sequences of half-circles at the left and at the right of $C(f_{1,1})$, $C(f^{-1}_{1,1})$, whose radius converge to zero. Each sequence will have associated a suitable sequence of M\"{o}bius transformations as follows.

\textbf{Part I. Building sequences of half-circles at the left of $C(f_{1,1})$ and at right of $C(f^{-1}_{1,1})$.} We divide the closed interval $\hat{I}_{1,1}$ into six as shown in Figure \ref{Figure14}. Then we consider the second sixth closed subinterval of its, which is given by
\begin{figure}[h!]
\begin{center}
\includegraphics[scale=0.6]{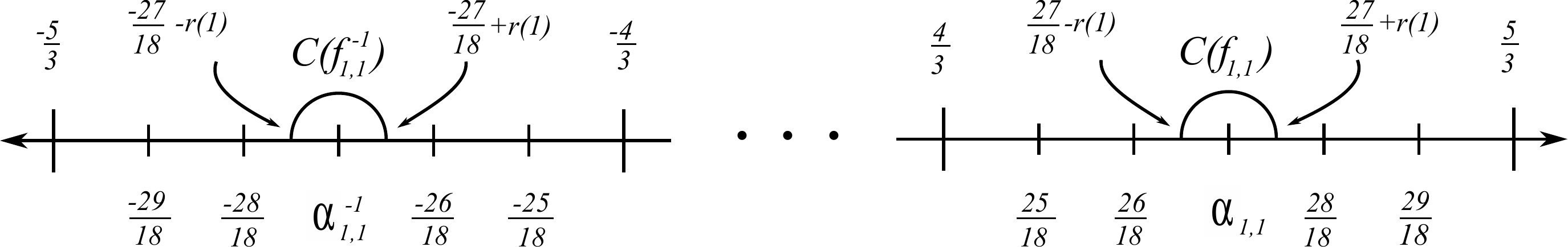}\\
\caption{\emph{The closed intervals $\hat{I}_{1,1}$ and $-\hat{I}_{1,1}$ are divided into six closed subintervals. For  $\hat{I}_{1,1}$ the second sixth subinterval is  $\left[\frac{25}{18}, \frac{26}{18}\right]$ denoted as $L_{1,1}$ and for  $-\hat{I}_{1,1}$ the fifth sixth subinterval is  $\left[-\frac{26}{18}, -\frac{25}{18}\right]$ denoted as $-L_{1,1}$.  }}
 \label{Figure14}
\end{center}
\end{figure}
\[
L_{1,1}:=\left[\dfrac{4+s_0}{3} +\dfrac{1}{3\cdot 6}, \dfrac{4+s_0}{3} +\dfrac{2}{3\cdot 6}\right] = \left[\dfrac{25 +6s_0}{3\cdot 6}, \dfrac{26 + 6s_0}{3\cdot 6}\right] = \left[\dfrac{25}{18}, \dfrac{26}{18}\right].
\]
We denote as $-L_{1,1}$ the closed interval symmetric with respect to the imaginary axis of $L$, \emph{i.e.},
\[
-L_{1,1} = \left[-\dfrac{26 + 6s_0}{3\cdot 6}, -\dfrac{25 +6s_0}{3\cdot 6}\right] = \left[-\dfrac{26}{18},-\dfrac{25}{18} \right].
\]

Then we write $L_{1,1}$ and $-L_{1,1}$ as an union of closed subintervals $\bigcup\limits_{m\in\mathbb{N}}\Bigl(L_{1,1}\Bigr)_m$ and $\bigcup\limits_{m\in\mathbb{N}}\Bigl(-L_{1,1}\Bigl)_m$,  which are given respectively as
\[
\begin{array}{ccl}
  \Bigl(L_{1,1}\Bigr)_m & = &\left[\dfrac{25+ 6s_0}{3\cdot 6}+\dfrac{1}{3\cdot 6 \cdot 2^m},\dfrac{25 +6s_0}{3\cdot 6}+\dfrac{1}{3\cdot 6 \cdot 2^{m-1}} \right], \\
      &= &\left[ \dfrac{(25+6s_0)\cdot 2^m +1}{3\cdot 6\cdot 2^m}, \dfrac{(25+6s_0)\cdot 2^{m-1} +1}{3\cdot 6\cdot 2^{m-1}}\right],\\
      & =& \left[\dfrac{25\cdot 2^m +1}{18\cdot 2^m}, \dfrac{25\cdot 2^{m-1}+1}{18\cdot 2^{m-1}} \right],\\
\Bigl(-L_{1,1}\Bigr)_m&=& \left[- \dfrac{(25+6s_0)\cdot 2^{m-1} +1}{3\cdot 6\cdot 2^{m-1}},- \dfrac{(25+6s_0)\cdot 2^m +1}{3\cdot 6\cdot 2^m} \right],\\
      & =& \left[- \dfrac{25\cdot 2^{m-1}+1}{18\cdot 2^{m-1}},-\dfrac{25\cdot 2^m +1}{18\cdot 2^m} \right].\\
\end{array}
\]
By definition, the length of $\Bigl(L_{1,1}\Bigr)_m$ and $\Bigl(-L_{1,1}\Bigr)_m$ is $\dfrac{1}{3\cdot 6\cdot 2^{m}}$ for each $m\in\mathbb{N}$, then  we choose the points $\Bigl(\alpha_{1,1}\Bigr)_{1,m}, \Bigl(\alpha_{1,1}\Bigr)_{2,m}$ belonged to $\Bigl(L_{1,1}\Bigr)_m$, as such
\begin{equation}\label{eq:32}
\begin{array}{cll}
\Bigl(\alpha_{1,1}\Bigr)_{1, m}& =\dfrac{(25+6s_0)\cdot 2^m +1}{3\cdot 6\cdot 2^m} +\dfrac{7}{10}\left( \dfrac{1}{3\cdot 6\cdot 2^{m}}\right) & =\dfrac{(250+60 s_0)\cdot 2^{m} +17}{10\cdot 3\cdot 6\cdot 2^{m}},\\
           &=\dfrac{250\cdot 2^{m} +17}{180 \cdot 2^{m}},&\\
\Bigl(\alpha_{1,1}\Bigr)_{2, m}& = \dfrac{(25+6s_0)\cdot 2^m +1}{3\cdot 6\cdot 2^m} +\dfrac{3}{10}\left( \dfrac{1}{3\cdot 6 \cdot 2^{m}}\right)&=\dfrac{(250+60s_0)\cdot 2^{m} +13}{10\cdot 3\cdot 6\cdot 2^{m}},\\
&=\dfrac{250 \cdot 2^{m} +13}{180 \cdot 2^{m}},&\\
\end{array}
\end{equation}
and the points $\Bigl(\alpha_{1,1}\Bigr)^{-1}_{1m}, \Bigl(\alpha_{1,1}\Bigr)^{-1}_{2,m}$ belonged to $\Bigl(-L_{1,1}\Bigr)_m$, as a result
\begin{equation}\label{eq:33}
\begin{array}{cl}
\Bigl(\alpha_{1,1}\Bigr)_{1, m}^{-1} &=-\dfrac{(250+60s_0)\cdot 2^{m} +13}{10\cdot 3\cdot 6\cdot 2^{m}},\\
                   &=-\dfrac{250 \cdot 2^{m} +13}{180 \cdot 2^{m}},\\
\Bigl(\alpha_{1,1}\Bigr)_{2, m}^{-1} & =-\dfrac{(250+60 s_0)\cdot 2^{m} +17}{10\cdot 3\cdot 6\cdot 2^{m}},\\
                   & =-\dfrac{250\cdot 2^{m} +17}{180 \cdot 2^{m}}.\\
\end{array}
\end{equation}
Then we let $C\Bigl(f_{1,1} \Bigr)_{1,m}, C\Bigl(f_{1,1} \Bigr)^{-1}_{1,m},C\Bigl(f_{1,1} \Bigr)_{2,m}, C\Bigl(f_{1,1} \Bigr)^{-1}_{2,m}$ be the four half-circles having centers $\Bigl(\alpha_{1,1}\Bigr)_{1,m}$, $\Bigl(\alpha_{1,1}\Bigr)_{1, m}^{-1}$, $\Bigl(\alpha_{1,1} \Bigr)_{2,m}$ and $\Bigl(\alpha_{1,1}\Bigr)_{2, m}^{-1}$ respectively (see the equations \ref{eq:32} and \ref{eq:33}), and the same radius $\Bigl(r(1)\Bigr)_m=\dfrac{1}{10}\left(\dfrac{1}{3\cdot 6 \cdot 2^{m}}\right)$ (see the Figure \ref{Figure15}).
 \begin{figure}[h!]
\begin{center}
\includegraphics[scale=0.6]{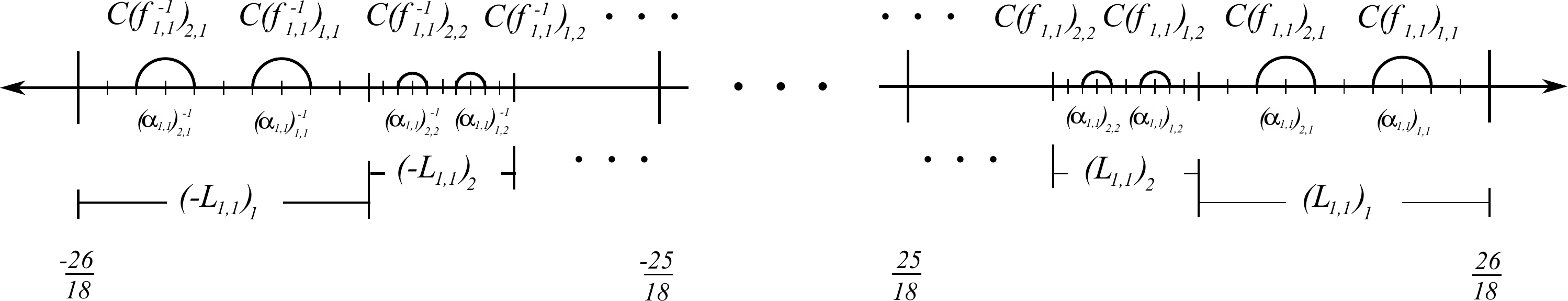}\\
\caption{\emph{The closed intervals $L_{1,1}$ and $-L_{1,1}$  are expressed as an union of closed subintervals $\bigcup\limits_{m\in\mathbb{N}}\Bigl(L_{1,1}\Bigr)_m$ and $\bigcup\limits_{m\in\mathbb{N}}\Bigl(-L_{1,1}\Bigl)_m$. Every  $\Bigl(L_{1,1}\Bigr)_m$ is built  with the addition of a term of the sequence $\dfrac{1}{3\cdot 6\cdot 2^m}$. On each $\Bigl(L_{1,1}\Bigr)_m$ there are two half-circles of radius $(r(1))_m$ }}
 \label{Figure15}
\end{center}
\end{figure}

\begin{remark}
By construction the centers of the half-circles $C\Bigl(f_{1,1} \Bigr)_{1,m}$ and $C\Bigl(f_{1,1} \Bigr)^{-1}_{2,m}$ (analogously, $C\Bigl(f_{1,1} \Bigr)^{-1}_{1,m}$ and $C\Bigl(f_{1,1} \Bigr)_{2,m}$) are symmetrical with respect to the imaginary axis \emph{i.e.}, we have $-\Bigl(\alpha_{1,1}\Bigr)_{1,m}=\Bigl(\alpha_{1,1}\Bigr)^{-1}_{2,m}$ (respectively, $-\Bigr(\alpha_{1,1}\Bigl)_{2,m}=\Bigl(\alpha_{1,1}\Bigr)^{-1}_{1,m}$).
\end{remark}

Now, for every $n\in\mathbb{N}$ we must calculate the M\"obius transformations and its respective inverse
\begin{equation}\label{eq:34}
\begin{array}{ccl}
\Bigl(f_{1,1} \Bigr)_{1,m}(z) &=&\dfrac{\Bigl(a_{1,1}\Bigr)_{1,m} z+ \Bigl(b_{1,1}\Bigr)_{1,m}}{\Bigl(c_{1,1}\Bigr)_{1,m}z+\Bigl(d_{1,1}\Bigr)_{1,m}},\\
\Bigl(f_{1,1}\Bigr)_{1,m}^{-1}(z)&=&\dfrac{\Bigr(d_{1,1}\Bigr)_{1,m}z-\Bigr(b_{1,1}\Bigr)_{1,m}}{-\Bigl(c_{1,1}\Bigr)_{1,m}z+\Bigl(a_{1,1}\Bigr)_{1,m}},
\end{array}
\end{equation}
having as isometric circles $C\Bigl(g_{1,1} \Bigr)_{1,m}$ and $C\Bigl(g_{1,1} \Bigr)^{-1}_{1,m}$, respectively.
By the remarks \ref{r:2.9} we have
\[
\begin{array}{cll}
\Bigl(a_{1,1}\Bigr)_{1,m} & = -((250+60s_0)\cdot 2^{m} +13) &=-(250\cdot 2^{m}+13),\\
\Bigl(c_{1,1}\Bigr)_{1,m} & = 10\cdot 3\cdot 6\cdot 2^{m} &=180\cdot 2^{m},  \\
\Bigl(d_{1,1}\Bigr)_{1,m} & = -((250+60 s_0)\cdot 2^{m} +17)&=-(250\cdot 2^m +17).\\
\end{array}
\]
Now, we substitute these values in the equation $a_{1,m}\cdot d_{1,m}-b_{1,m}\cdot c_{1,m}=1$ and computing we hold
\[
\begin{array}{cl}
\Bigl(b_{1,1}\Bigr)_{1,m} & =\dfrac{\left((250+60s_0)\cdot 2^{m} +13\right)((250+60 s_0)\cdot 2^{m} +17)-1}{10\cdot 3\cdot 6\cdot 2^{m}},\\
        &=\dfrac{(250\cdot 2^{m}+13)(250\cdot 2^m +17)-1}{180\cdot 2^{m}}.
\end{array}
\]
Hence, we can easily write the explicit form of the M\"{o}bius transformations of  the equations \ref{eq:34}.

On the other hand, for every $m\in \mathbb{N}$ we shall calculate the M\"obius transformations and its respective inverse
\begin{equation}\label{eq:35}
\begin{array}{ccl}
\Bigl(f_{1,1} \Bigr)_{2,m}(z) &=&\dfrac{\Bigl(a_{1,1}\Bigr)_{2,m}z+\Bigl(b_{1,1}\Bigr)_{2,m}}{\Bigl(c_{1,1}\Bigr)_{2,m}z+\Bigl(d_{1,1}\Bigr)_{2,m}}, \\
\Bigl(f_{1,1}\Bigr)_{2,m}^{-1}(z)&=&\dfrac{\Bigl(d_{1,1}\Bigr)_{2,m}z-\Bigl(b_{1,1}\Bigr)_{2,m}}{-\Bigl(c_{1,1}\Bigr)_{2,m}z+\Bigl(a_{1,1}\Bigr)_{2,m}},\\
\end{array}
\end{equation}
having as isometric circles $C\Bigl(f_{1,1} \Bigr)_{2,m}$  and $C\Bigl(f_{1,1} \Bigr)^{-1}_{2,m}$, respectively. By remarks \ref{r:2.9} we get
\[
\begin{array}{cll}
\Bigl(a_{1,1}\Bigr)_{2,m} & = -((250+60 s_0)\cdot 2^{m} +17)&=-(250\cdot 2^{m}+17) ,\\
\Bigl(c_{1,1}\Bigr)_{2,m} & = 10\cdot 3\cdot 6\cdot 2^{m}&=180\cdot 2^{m},\\
\Bigl(d_{1,1}\Bigr)_{2,m} & = -((250+60s_0)\cdot 2^{m} +13) &=-(250\cdot 2^{m}+13).\\
\end{array}
\]
Now, we substitute these values in the equation $\Bigl(a_{1,1}\Bigr)_{2,m}\cdot \Bigl(d_{1,1}\Bigr)_{2,m}-\Bigl(b_{1,1}\Bigr)_{2,m}\cdot \Bigl(c_{1,1}\Bigr)_{2,m}=1$ and computing we hold
\[
\begin{array}{cl}
\Bigl(b_{1,1}\Bigr)_{2,m} & = \dfrac{((250+60 s_0)\cdot 2^{m} +17)((250+60s_0)\cdot 2^{m} +13)-1}{10\cdot 3\cdot 6\cdot 2^{m}},\\
        & =\dfrac{(250\cdot 2^{m}+17)(250\cdot 2^{m}+13)-1}{180\cdot 2^{m}}.\\
\end{array}
\]
Hence, we can easily write the explicit form of the M\"{o}bius transformation of  the equations \ref{eq:35}. Then, we define the sets $\Bigl(J_{1,1}\Bigr)_{left-right}$ and $\Bigl(\mathcal{C}_{1,1}\Bigr)_{left-right}$ composed by M\"{o}bius transformation and half-circle, respectively, as such
\begin{equation}\label{eq:36}
\begin{array}{cl}
\Bigl(J_{1,1}\Bigr)_{left-right} &:=\left\{\Bigl(f_{1,1}\Bigr)_{1,m}(z), \, \Bigl(f_{1,1}\Bigr)^{-1}_{1,m}(z), \,  \Bigl(f_{1,1}\Bigr)_{2,m}(z), \Bigl(f_{1,1}\Bigr)_{2,m}^{-1}(z):m\in\mathbb{N} \right\},\\
\Bigl(\mathcal{C}_{1,1}\Bigr)_{left-right} & := \left\{ C\Bigl(f_{1,1}\Bigr)_{1,m}, \, C\Bigl(f_{1,1}\Bigr)^{-1}_{1,m}, \, C\Bigl(f_{1,1}\Bigr)_{2,m}, \, C\Bigl(f_{1,1}\Bigr)_{2,m}^{-1}:m\in\mathbb{N} \right\}. \\
\end{array}
\end{equation}

We remark that by construction each M\"{o}bius transformation of $\Bigl(J_{1,1}\Bigr)_{left-right}$ is hyperbolic and the half-circles of $\Bigl(\mathcal{C}_{1,1}\Bigr)_{left-right}$ are pairwise disjoint.

\textbf{Part II. Building sequences of half-circle at the right of $C(f_{1,1})$ and at the left of $C(f^{-1}_{1,1})$.} We consider the fifth sixth closed subinterval of $\hat{I}_{1,1}$, which is given by
\[
S_{1,1}=\left[\dfrac{3+s_1}{3} -\dfrac{2}{3\cdot 6}, \dfrac{3+s_1}{3} +\dfrac{1}{3\cdot 6}\right] = \left[\dfrac{16+6s_1}{3\cdot6},\dfrac{17+6s_1}{3\cdot6}\right]=\left[\dfrac{28}{18},\dfrac{29}{18}\right].
\]
Then we denote as $-S_{1,1}$ the closed interval symmetric with respect to the imaginary axis \emph{i.e.},
\[
-S_{1,1}=\left[-\dfrac{17+6s_1}{3\cdot6},- \dfrac{16+6s_1}{3\cdot6} \right] =\left[-\dfrac{29}{18},-\dfrac{28}{18}\right].
\]

See Figure \ref{Figure16}. Then we write $S_{1,1}$ and $-S_{1,1}$ as the union of closed subintervals $\bigcup\limits_{m\in\mathbb{N}}\Bigl(S_{1,1}\Bigr)_m$ and $\bigcup\limits_{m\in\mathbb{N}}\Bigr(-S_{1,1}\Bigl)_m$ respectively, as such
\[
\begin{array}{clccc}
  \Bigr(S_{1,1}\Bigl)_m & =\left[\dfrac{17+6s_1}{3\cdot 6} -\dfrac{1}{3\cdot 6 \cdot 2^{m-1}}, \dfrac{17+6s_1}{3\cdot 6}- \dfrac{1}{3\cdot 6 \cdot 2^m} \right], \\
      &=\left[ \dfrac{(17+6s_1)\cdot 2^{m-1}-1}{3\cdot 6\cdot 2^{m-1}}, \dfrac{(17+6s_1)\cdot 2^m -1}{3\cdot 6\cdot 2^m}\right],\\
      & =\left[\dfrac{29\cdot 2^{m-1}-1}{3\cdot6\cdot 2^{m-1}}, \dfrac{29\cdot 2^{m}-1}{3\cdot6\cdot 2^{m}} \right],\\
  \Bigl(-S_{1,1}\Bigr)_m&=\left[ -\dfrac{(17+6s_1)\cdot 2^m -1}{3\cdot 6\cdot 2^m}, -\dfrac{(17+6s_1)\cdot 2^{m-1}-1}{3\cdot 6\cdot 2^{m-1}} \right],\\
      & =\left[-\dfrac{29\cdot 2^{m-1}-1}{3\cdot6\cdot 2^{m-1}}, -\dfrac{29\cdot 2^{m}-1}{3\cdot6\cdot 2^{m}} \right].\\
\end{array}
\]
By definition  the length of $\Bigl(S_{1,1}\Bigr)_m$ and $\Bigl(-S_{1,1}\Bigr)_m$ is $\dfrac{1}{3\cdot 6\cdot 2^{m}}$ for each $m\in\mathbb{N}$, then  we choose the points $\Bigl(\alpha_{1,1}\Bigr)_{3,m}$ and $\Bigl(\alpha_{1,1}\Bigr)_{4,m}$  belonging to $\Bigl(S_{1,1}\Bigr)_m$, as such
\begin{equation}\label{eq:37}
\begin{array}{cll}
\Bigl(\alpha_{1,1}\Bigr)_{3, m}& = \dfrac{(17+6s_1)\cdot 2^{m-1}-1}{3\cdot 6\cdot 2^{m-1}}+\dfrac{7}{10}\left( \dfrac{1}{3\cdot 6\cdot 2^{m}}\right) & =\dfrac{(170+60s_1)\cdot 2^{m}-13}{10\cdot 3\cdot 6\cdot 2^{m}},\\
            & =\dfrac{290\cdot2^m-13}{180\cdot2^m},&\\
\Bigl(\alpha_{1,1}\Bigr)_{4, m}& = \dfrac{(170+60s_1)\cdot 2^{m-1}-1}{3\cdot 6\cdot 2^{m-1}} +\dfrac{3}{10}\left( \dfrac{1}{3\cdot 6 \cdot 2^{m}}\right) &=\dfrac{(170+60s_1)\cdot 2^{m}-17}{10\cdot 3\cdot 6\cdot 2^{m}},\\
             &=\dfrac{290\cdot 2^m-17}{180\cdot 2^m},&\\
\end{array}
\end{equation}
and the points $\Bigl(\alpha_{1,1}\Bigr)^{-1}_{3,m}$ and $\Bigl(\alpha_{1,1}\Bigl)^{-1}_{4,m}$ belonging to $\Bigl(-S_{1,1}\Bigr)_m$, as such
\begin{equation}\label{eq:38}
\begin{array}{cl}
\Bigl(\alpha_{1,1}\Bigr)_{3, m}^{-1} &= - \dfrac{(170+60s_1)\cdot 2^{m}-17}{10\cdot 3\cdot 6\cdot 2^{m}},\\
                  & =-\dfrac{290\cdot 2^m-17}{180\cdot 2^m},\\
\Bigl(\alpha_{1,1}\Bigr)_{4, m}^{-1} &=- \dfrac{(170+60s_1)\cdot 2^{m}-13}{10\cdot 3\cdot 6\cdot 2^{m}},\\
                  &=-\dfrac{290\cdot2^m-13}{180\cdot2^m}.\\
\end{array}
\end{equation}

Then we let $C\Bigl(f_{1,1} \Bigr)_{3,m}, C\Bigl(f_{1,1} \Bigr)^{-1}_{3,m}, C\Bigl(f_{1,1} \Bigr)_{4,m}, C\Bigl(f_{1,1} \Bigr)^{-1}_{4,m}$ be the $4m$ half-circles having centers $\Bigl(\alpha_{1,1}\Bigr)_{3,m}$, $\Bigl(\alpha_{1,1}\Bigr)_{3, m}^{-1}$, $\Bigl(\alpha_{1,1}\Bigr)_{4,m}$ and $\Bigl(\alpha_{1,1}\Bigr)_{4, m}^{-1}$ respectively (see the equations \ref{eq:37} and \ref{eq:38}), and the same radius $\Bigl(r(1)\Bigr)_m=\dfrac{1}{10}\left(\dfrac{1}{3\cdot 6 \cdot 2^{m}}\right)$ (see the Figure \ref{Figure16}).
\begin{figure}[h!]
\begin{center}
\includegraphics[scale=0.6]{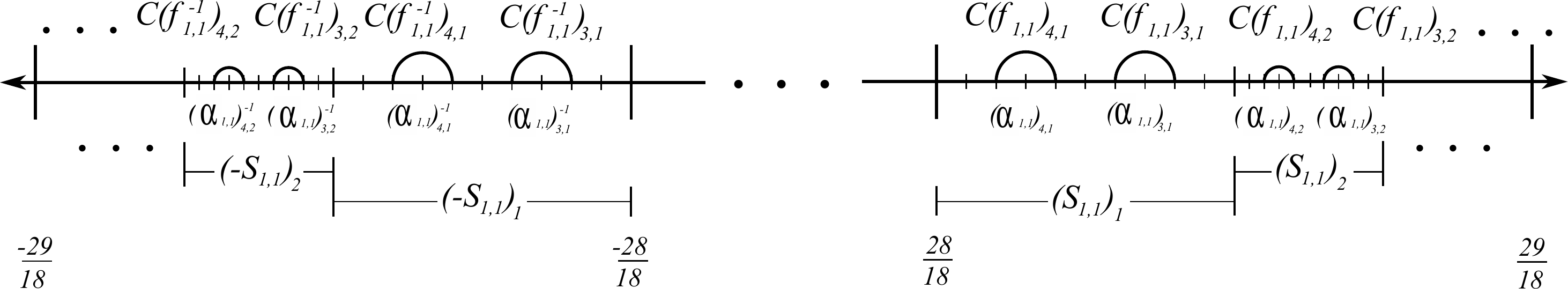}\\
\caption{\emph{The closed intervals $S_{1,1}$ and $-S_{1,1}$  are expressed as an union of closed subintervals $\bigcup\limits_{m\in\mathbb{N}}\Bigl(S_{1,1}\Bigr)_m$ and $\bigcup\limits_{m\in\mathbb{N}}\Bigl(-S_{1,1}\Bigl)_m$. Every  $\Bigl(S_{1,1}\Bigr)_m$ is built  with the subtraction of a term of the sequence $\dfrac{1}{3\cdot 6\cdot 2^m}$. On each $\Bigl(S_{1,1}\Bigr)_m$ there are two half-circles of radius $(r(1))_m$.}}
 \label{Figure16}
\end{center}
\end{figure}

\begin{remark}
By construction, the centers of the half-circles $C\Bigl(f_{1,1} \Bigr)_{3,m}$ and $C\Bigl(f_{1,1} \Bigr)^{-1}_{4,m}$ (analogously, $C\Bigl(f_{1,1} \Bigr)^{-1}_{3,m}$ and $C\Bigl(f_{1,1} \Bigr)_{4,m}$) are symmetrical  with respect to the imaginary axis \emph{i.e.}, we have $-\Bigl(\alpha_{1,1}\Bigr)_{3,m}=\Bigl(\alpha_{1,1}\Bigr)^{-1}_{4,m}$ (respectively, $-\Bigl(\alpha_{1,1}\Bigr)_{4,m}=\Bigl(\alpha_{1,1}\Bigr)^{-1}_{3,m}$).
\end{remark}

Now, for every $m\in\mathbb{N}$ we must calculate the M\"obius transformations and its respective inverse
\begin{equation}\label{eq:39}
\begin{array}{ccl}
\Bigl(f_{1,1} \Bigr)_{3,m}(z) &=&\dfrac{\Bigl(a_{1,1}\Bigr)_{3,m}z+\Bigl(b_{1,1}\Bigr)_{3,m}}{\Bigl(c_{1,1}\Bigr)_{3,m}z+\Bigl(d_{1,1}\Bigr)_{3,m}},\\
\Bigl(f_{1,1}\Bigr)_{3,m}^{-1}(z)&=&\dfrac{\Bigl(d_{1,1}\Bigr)_{3,m}z-\Bigl(b_{1,1}\Bigr)_{3,m}}{-\Bigl(c_{1,1}\Bigr)_{3,m}z+\Bigl(a_{1,1}\Bigr)_{3,m}},
\end{array}
\end{equation}
having as isometric circles $C\Bigl(f_{1,1} \Bigr)_{3,m}$ and $C\Bigl(f_{1,1} \Bigr)^{-1}_{3,m}$, respectively. By remarks \ref{r:2.9} we have
\[
\begin{array}{cll}
\Bigl(a_{1,1}\Bigr)_{3,m} & =-((170+60s_1)\cdot 2^{m}-17) & =-(290\cdot 2^m-17) ,\\
\Bigl(c_{1,1}\Bigr)_{3,m} & = 10\cdot 3\cdot 6\cdot 2^m & =180\cdot 2^m, \\
\Bigl(d_{1,1}\Bigr)_{3,m} & = -((170+60s_1)\cdot 2^{m}-13) &=-(290\cdot2^m-13).\\
\end{array}
\]
Now, we substitute these values in the equation $\Bigl(a_{1,1}\Bigr)_{3,m}\cdot \Bigl(d_{1,1}\Bigr)_{3,m}-\Bigl(b_{1,1}\Bigr)_{3,m}\cdot \Bigl(c_{1,1}\Bigr)_{3,m}=1$ and computing we hold
\[
\begin{array}{cl}
\Bigl(b_{1,1}\Bigr)_{3,m} & = \dfrac{(170+60s_1)\cdot 2^{m}-17)((170+60s_1)\cdot 2^{m}-13)-1}{10\cdot 3\cdot 6\cdot 2^m},\\
        &=\dfrac{(290\cdot 2^m-17)(290\cdot2^m-13)-1}{180\cdot2^m}.\\
\end{array}
\]
Hence, we can easily write the explicit form of the M\"{o}bius transformations of the equations \ref{eq:39}.

In a similar way, for every $n\in\mathbb{N}$ we shall calculate the M\"obius transformations and its respective inverse
\begin{equation}\label{eq:40}
\begin{array}{ccl}
\Bigl(f_{1,1}\Bigr)_{4,m}(z) & = & \dfrac{\Bigl(a_{1,1}\Bigr)_{4,m}z+\Bigl(b_{1,1}\Bigr)_{4,m}}{\Bigl(c_{1,1}\Bigr)_{4,m}z+\Bigl(d_{1,1}\Bigr)_{4,m}},\\
\Bigl(f_{1,1}\Bigr)_{4,m}^{-1}(z) & = &\dfrac{\Bigl(d_{1,1}\Bigr)_{4,m}z-\Bigl(b_{1,1}\Bigr)_{4,m}}{-\Bigl(c_{1,1}\Bigr)_{4,m}z+\Bigl(a_{1,1}\Bigr)_{4,m}},\\
\end{array}
\end{equation}
having as isometric circles $C\Bigl(f_{1,1} \Bigr)_{4,m}$  and $C\Bigl(f_{1,1} \Bigr)^{-1}_{4,m}$, respectively. By remarks \ref{r:2.9} we get

\[
\begin{array}{cll}
\Bigl(a_{1,1}\Bigr)_{4,m} & = -((170+60s_1)\cdot 2^{m}-13)&=-(290\cdot2^m-13) ,\\
\Bigl(c_{1,1}\Bigr)_{4,m} & = 10\cdot 3\cdot 6\cdot 2^m & =180\cdot 2^m , \\
\Bigl(d_{1,1}\Bigr)_{4,m} & = -((170+60s_1)\cdot 2^{m}-17)&=-(290\cdot 2^m-17).\\
\end{array}
\]
Now, we substitute these values in the equation $\Bigl(a_{1,1}\Bigr)_{4,m}\cdot \Bigl(d_{1,1}\Bigr)_{4,m}-\Bigl(b_{1,1}\Bigr)_{4,m}\cdot \Bigl(c_{1,1}\Bigr)_{4,m}=1$ and computing we hold
\[
\begin{array}{cl}
\Bigl(b_{1,1}\Bigr)_{4,m} & = \dfrac{((170+60s_1)\cdot 2^{m}-13)((170+60s_1)\cdot 2^{m}-17)-1}{10\cdot 3\cdot 6\cdot 2^m},\\
        &=\dfrac{(290\cdot2^m-13)(290\cdot 2^m-17)-1}{180\cdot2^m}.\\
\end{array}
\]
Hence, we can easily write the explicit form of the M\"{o}bius transformations of  equations \ref{eq:40}. Then, we define the sets $\Bigl(J_{1,1}\Bigr)_{right-left}$ and $\Bigl(\mathcal{C}_{1,1}\Bigr)_{right-left}$ composed by M\"{o}bius transformations and half-circles, respectively, as such
\begin{equation}\label{eq:41}
\begin{array}{cl}
\Bigl(J_{1,1}\Bigr)_{right-left} &:=\left\{\Bigl(f_{1,1}\Bigr)_{3,m}(z), \, \Bigl(f_{1,1}\Bigr)^{-1}_{3,m}(z), \,  \Bigl(f_{1,1}\Bigr)_{4,m}(z), \Bigl(f_{1,1}\Bigr)_{4,m}^{-1}(z):m\in\mathbb{N} \right\},\\
\Bigl(\mathcal{C}_{1,1}\Bigr)_{right-left} & := \left\{ C\Bigl(f_{1,1}\Bigr)_{3,m}, \, C\Bigl(f_{1,1}\Bigr)^{-1}_{3,m}, \, C\Bigl(f_{1,1}\Bigr)_{4,m}, \, C\Bigl(f_{1,1}\Bigr)_{4,m}^{-1}:m\in\mathbb{N} \right\}. \\
\end{array}
\end{equation}

We remark that by construction each M\"{o}bius transformation of $\Bigl(J_{1,1}\Bigr)_{right-left}$ is hyperbolic and the half-circles of $\Bigl(\mathcal{C}_{1,1}\Bigr)_{right-left}$ are pairwise disjoint.

Finally, from the equations \ref{eq:31}, \ref{eq:36}, and \ref{eq:41} we define the sets of the M\"{o}bius transformations and their respective isometric circles to the step 1, as such
\begin{equation}\label{eq:42}
\begin{array}{cl}
J_{1}
     & := \left\{f_{1,1}(z), \, f_{1,1}^{-1}(z), \, \Bigl( f_{1,1}\Bigr)_{s,m}(z), \, \Bigl( f_{1,1}\Bigr)_{s,m}^{-1}(z): s\in \{1,\ldots, 4\}, \, m\in\mathbb{N}\right\},\\
\mathcal{C}_1 
              & := \left\{C(f_{1,1}), \, C(f_{1,1}^{-1}), \, C\Bigl( f_{1,1}\Bigr)_{s,m}, \, C\Bigl( f_{1,1}\Bigr)_{s,m}^{-1}: s\in \{1,\ldots, 4\}, \, m\in\mathbb{N}\right\}.\\
\end{array}
\end{equation}

We remark that by construction each M\"{o}bius transformation of $J_{1}$ is hyperbolic and the half-circles of $\mathcal{C}_{1}$ are pairwise disjoint.

\begin{remark}\label{r:3.17}
If we consider the classical Schottky group of rank one $\Gamma_1$, which is generated by the set $J_1$ (see the equation \ref{eq:42}). Using the same ideas as the Infinite Loch Ness monster case, it is easy to check that the quotient space $S_1:=\mathbb{H}/ \Gamma_1$ (see the equation \ref{eq:13}) is a hyperbolic surface having infinite area and homeomorphic to the ladder of Jacob \emph{i.e.}, $S_1$ has two ends and each one having infinite genus. See the Figure \ref{Figure17}.
\begin{figure}[h!]
\begin{center}
\includegraphics[scale=0.5]{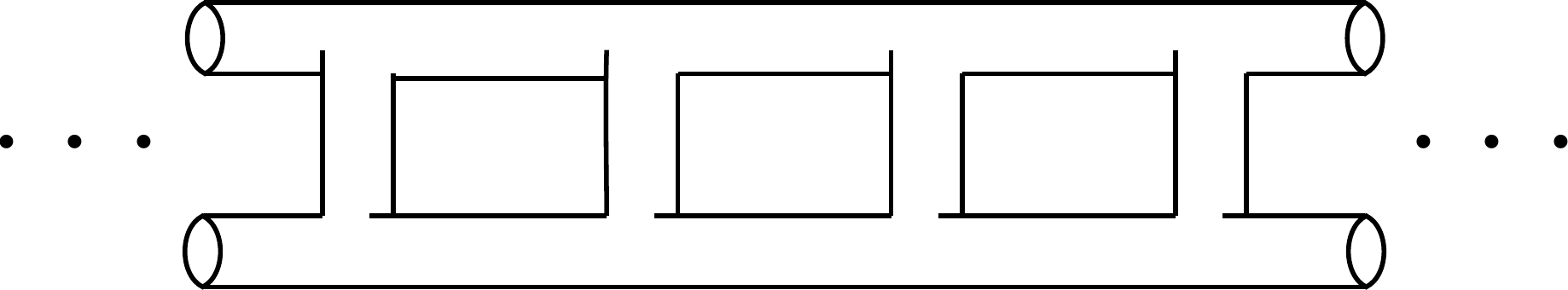}\\
\caption{\emph{The ladder of Jacob}.}
 \label{Figure17}
\end{center}
\end{figure}
\end{remark}

\textbf{For $n$. Building the set $J_n$ containing infinitely countable M\"obius transformation and the set $\mathcal{C}_n$ composed by its respective isometric circles.} We consider the closed subset  (see the theorem \ref{T:2.1})
\[
I_{n-1}=\bigcup\limits_{k=0}^{2^{n-1} -1} \left[\dfrac{3^{n-1}+s_k}{3^{n-1}}, \dfrac{3^{n-1}-s_k +1}{3^{n-1}}\right]\subset I_0
 \]
and its symmetrical with respect to the imaginary axis. See the Figure \ref{Figure18}.
\[
-I_{n-1}=\bigcup\limits_{k=0}^{2^{n-1} -1} \left[-\dfrac{3^{n-1}-s_k +1}{3^{n-1}}, - \dfrac{3^{n-1}+s_k}{3^{n-1}}\right]\subset -I_0.
 \]
Then for each $k\in \{0 ,\ldots,2^{n-1}-1\}$ we let $\hat{I}_{n,k},-\hat{I}_{n,k}$ be the middle third of the closed intervals
\begin{align*}
\left[ \dfrac{3^{n-1}+s_{k-1}}{3^{n-1}},\dfrac{3^{n-1}+s_{k-1}+1}{3^{n-1}}\right] &  & \text{ and } & &\left[-\dfrac{3^{n-1}+s_{k-1}+1}{3^{n-1}}, -\dfrac{3^{n-1}+s_{k-1}}{3^{n-1}}\right],\\
\end{align*}
respectively. By the remark \ref{r:2.2} we hold
\begin{align*}
\hat{I}_{n,k}   & =\left[\dfrac{3^n + s_{2k-2} +1}{3^n}, \dfrac{3^n + s_{2k-1}}{3^n}\right],\\
-\hat{I}_{n,k}  & =\left[-\dfrac{3^n + s_{2k-1}}{3^n}, -\dfrac{3^n + s_{2k-2} +1}{3^n} \right].\\
\end{align*}

\begin{figure}[h!]
\begin{center}
\includegraphics[scale=0.6]{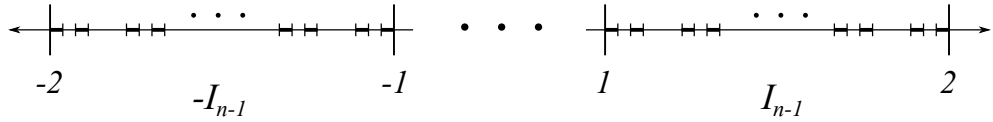}\\
\caption{\emph{The closed intervals $\hat{I}_{n,k},-\hat{I}_{n,k}$}.}
 \label{Figure18}
\end{center}
\end{figure}

On the other hand, for every $k\in\{0,\ldots, 2^{n-1}-1\}$ the length of the closed intervals $\hat{I}_{n,k}$ and $-\hat{I}_{n,k}$ is $\dfrac{1}{3^n}$ and their respective middle points are $\alpha_{n,k}$ and $\alpha_{n,k}^{-1}$ as in the equation \ref{eq:21}. Then we let $C(f_{n,k}), C(f^{-1}_{n,k})$ be the half-circles having centers $\alpha_{n,k}$ and $\alpha^{-1}_{n,k}$ respectively, and the same radius $r(n)=\dfrac{1}{3^n\cdot 6\cdot 2}$.

Now, for each $k\in\{0,\ldots, 2^{n-1}-1\}$ we shall calculate the M\"obius transformations and its respective inverse for each $n$

\begin{equation}\label{eq:43}
\begin{array}{ccl}
f_{n,k}(z)      &=&\dfrac{a_{n,k}z+b_{n,k}}{c_{n,k}z+d_{n,k}},\\
f_{n,k}^{-1}(z) &=&\dfrac{d_{n,k}z-b_{n,k}}{-c_{n,k}z+a_{n,k}},
\end{array}
\end{equation}
which has as isometric circles $C(f_{n,k})$  and $C(f^{-1}_{n,k})$, respectively (see the Figure \ref{Figure19}).

\begin{figure}[h!]
\begin{center}
\includegraphics[scale=0.6]{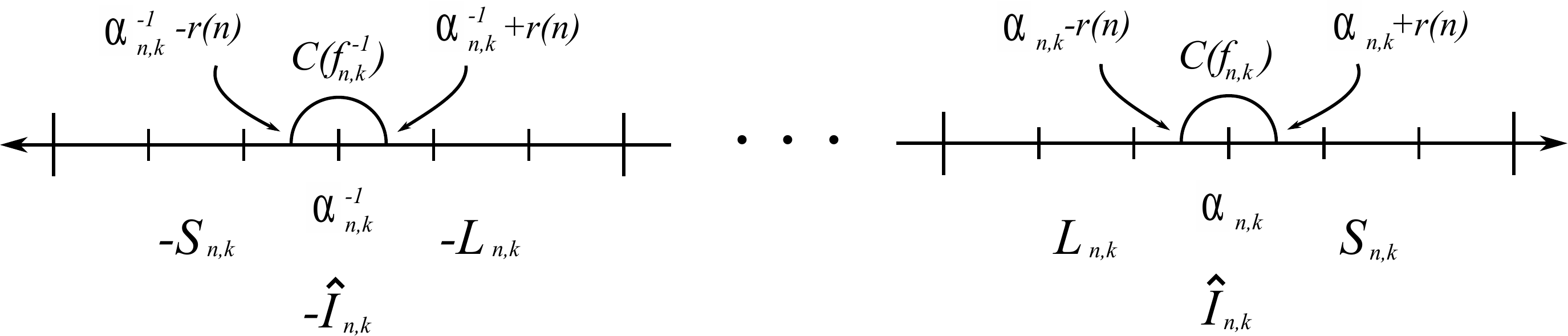}\\
\caption{\emph{$L_{n,k}$ represents the second sixth closed subinterval of $\hat{I}_{n,k}$ and $-L_{n,k}$ represents their symmetric with respect to the imaginary axis.}}
 \label{Figure19}
\end{center}
\end{figure}
By remarks \ref{r:2.9} we have
\[
\begin{array}{cclcl}
a_{n,k} & = &-6\cdot(3^n \cdot 2+2(1+ s_{2k-1})+1),\\
c_{n,k} & = & 6\cdot 3^n\cdot 2 ,\\
d_{n,k} & = &-6\cdot(3^n \cdot 2+2(1+ s_{2k-1})+1).\\
\end{array}
\]
Now, we substitute these values in the equation $a_{1,1}\cdot d_{n,k}-b_{n,k}\cdot c_{n,k}=1$ and computing we hold
\[
b_{n,k} = \dfrac{6^2\cdot(3^n\cdot 2+2(1+s_{2k-1})+1)^2 -1 }{6\cdot 3^n\cdot 2}.
\]
Thus, we know the explicit form of the M\"obius transformations of the equations \ref{eq:43}. Then for each $k\in \{0,\ldots, 2^{n-1}-1\}$ we define the sets
\begin{equation}\label{eq:44}
\begin{array}{ccc}
J_{n,k}           &:=& \{f_{n,k}(z), \, f^{-1}_{n,k}(z) \},\\
\mathcal{C}_{n,k} &:=& \{C(f_{n,k}), \, C(f^{-1}_{n,k}) \}.
\end{array}
\end{equation}
By construction each M\"{o}bius transformation of $J_{n,k}$ is hyperbolic and the half-circle of $C_{n,k}$ are pairwise disjoint.

Now, for each $k\in\{0,\ldots, 2^{n-1}-1\}$ we shall build sequences of half-circles at the left and at the right of $C(f_{n,k})$, $C(f^{-1}_{n,k})$, whose radius converge to zero, each sequence will have associated a suitable sequence of M\"{o}bius transformations as follows.

\textbf{Part I. Building sequences of half-circles at the left of $C(f_{n,k})$ and at the right of $C(f^{-1}_{n,k})$.} For each $k\in\{0,\ldots,2^{n-1}-1\}$ we consider the second sixth closed subinterval of each
$$\hat{I}_{n,k}=\left[\dfrac{3^n+s_{2k-2}+1}{3^n}, \dfrac{3^n+ s_{2k-1}}{3^n}\right],$$
which are given by

\[
L_{n,k}:=\left[\dfrac{3^n+s_{2k-2}+1}{3^n} +\dfrac{1}{3^n\cdot 6}, \dfrac{3^n+s_{2k-2}+1}{3^n} +\dfrac{2}{3^n\cdot 6}\right].
\]
We denote as $-L_{n,k}$ the closed interval symmetric with respect to the imaginary axis of $L_{n,k}$ \emph{i.e.},
\[
-L_{n,k} = \left[- \dfrac{3^n+s_{2k-2}+1}{3^n} -\dfrac{2}{3^n\cdot 6}, -\dfrac{3^n+s_{2k-2}+1}{3^n} -\dfrac{1}{3^n\cdot 6}\right].
\]

Then we write $L_{n,k}$ and $-L_{n,k}$ as an union of closed subintervals $\bigcup\limits_{m\in\mathbb{N}}\Bigl(L_{n,k}\Bigr)_m$ and $\bigcup\limits_{m\in\mathbb{N}}\Bigl(-L_{n,k}\Bigl)_m$,  which are given respectively as

\[
 \Bigl(L_{n,k} \Bigr)_m:=\left[\dfrac{3^n+s_{2k-2}+1}{3^n} +\dfrac{1}{3^n\cdot 6\cdot 2^m}, \dfrac{3^n+s_{2k-2}+1}{3^n} +\dfrac{1}{3^n\cdot 6\cdot 2^{m+1}}\right],
\]

\[
 \Bigl(-L_{n,k} \Bigr)_m:=\left[- \dfrac{3^n+s_{2k-2}+1}{3^n} -\dfrac{1}{3^n\cdot 6\cdot 2^{m+1}}, -\dfrac{3^n+s_{2k-2}+1}{3^n} -\dfrac{1}{3^n\cdot 6\cdot 2^m}\right].
\]
\begin{figure}[h!]
\begin{center}
\includegraphics[scale=0.6]{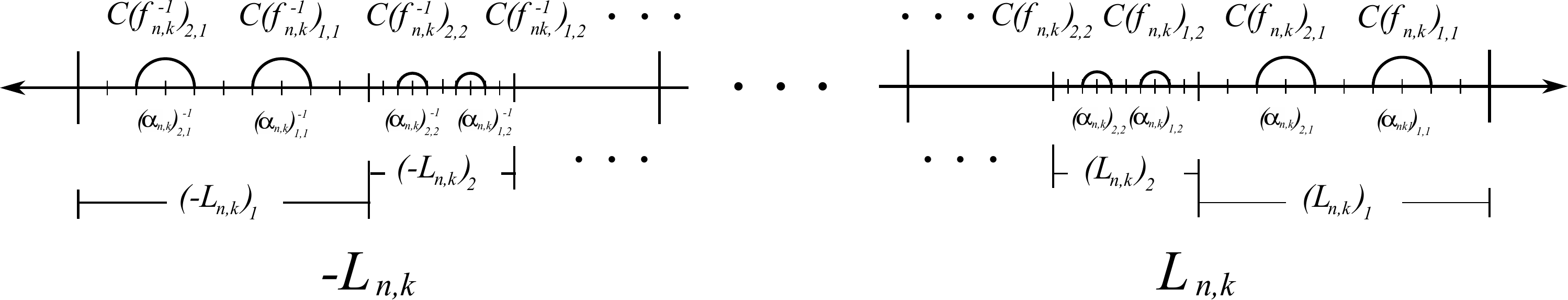}\\
\caption{\emph{The closed intervals $-L_{n,k}$  and $L_{n,k}$ written as an union of closed subintervals $\bigcup\limits_{m\in\mathbb{N}}\Bigl(L_{n,k}\Bigr)_m$ and $\bigcup\limits_{m\in\mathbb{N}}\Bigl(-L_{n,k}\Bigl)_m$.}}
 \label{Figure20}
\end{center}
\end{figure}
By definition the length of $ \Bigl(L_{n,k} \Bigr)_m$ and $ \Bigl(-L_{n,k} \Bigr)_m$ is $\dfrac{1}{3^n\cdot 6\cdot 2^{m}}$ for each $m\in\mathbb{N}$, then  we choose the points $(\alpha_{n,k})_{1,m}, (\alpha_{n,k})_{2,m}$ belonged to $ \Bigl(L_{n,k} \Bigr)_m$ as

\begin{equation}\label{eq:45}
\begin{array}{cl}
\Bigl(\alpha_{n,k}\Bigr)_{1,m}& =\dfrac{3^n+s_{2k-2}+1}{3^n} +\dfrac{1}{3^n\cdot 6\cdot 2^m}+\dfrac{7}{10}\left(\dfrac{1}{3^n\cdot 6\cdot 2^m}\right),\\
& =\dfrac{10\cdot 6\cdot 2^m(3^n+s_{2k-2}+1)+17}{3^n\cdot 10 \cdot 6\cdot 2^m},\\
\Bigr(\alpha_{n,k}\Bigl)_{2,m} &= \dfrac{3^n+s_{2k-2}+1}{3^n} +\dfrac{1}{3^n\cdot 6\cdot 2^m}+\dfrac{3}{10}\left(\dfrac{1}{3^n\cdot 6\cdot 2^m}\right),\\
& =\dfrac{10\cdot 6\cdot 2^m(3^n+s_{2k-2}+1)+13}{3^n\cdot 10 \cdot 6\cdot 2^m},
\end{array}
\end{equation}
and the points $\alpha^{-1}_{1m}, \alpha^{-1}_{2,m}$ belonged to $ \Bigl(-L_{n,k} \Bigr)_m$ as
\begin{equation}\label{eq:46}
\begin{array}{cl}
\Bigl(\alpha_{n,k}\Bigr)_{1,m}^{-1}& =-\dfrac{3^n+s_{2k-2}+1}{3^n} -\dfrac{1}{3^n\cdot 6\cdot 2^m}-\dfrac{3}{10}\left(\dfrac{1}{3^n\cdot 6\cdot 2^m}\right),\\
& =-\dfrac{10\cdot 6\cdot 2^m(3^n+s_{2k-2}+1)+13}{3^n\cdot 10 \cdot 6\cdot 2^m},\\
\Bigl(\alpha_{n,k}\Bigr)_{2,m}^{-1} &= -\dfrac{3^n+s_{2k-2}+1}{3^n} -\dfrac{1}{3^n\cdot 6\cdot 2^m}-\dfrac{7}{10}\left(\dfrac{1}{3^n\cdot 6\cdot 2^m}\right),\\
& =-\dfrac{10\cdot 6\cdot 2^m(3^n+s_{2k-2}+1)+17}{3^n\cdot 10 \cdot 6\cdot 2^m}.
\end{array}
\end{equation}

Then we let $C\Bigl(f_{n,k} \Bigr)_{1,m}, C\Bigl(f_{n,k} \Bigr)^{-1}_{1,m},C\Bigl(f_{n,k} \Bigr)_{2,m}, C\Bigl(f_{n,k} \Bigr)^{-1}_{2,m}$ be the $m$ half-circles having centers $\Bigl(\alpha_{n,k}\Bigr)_{1,m}$, $\Bigl(\alpha_{n,k}\Bigr)_{1, m}^{-1}$, $\Bigl(\alpha_{n,k} \Bigr)_{2,m}$ and $\Bigl(\alpha_{n,k}\Bigr)_{2, m}^{-1}$ respectively (see the equation \ref{eq:45} and \ref{eq:46}), and the same radius $\Bigl(r(1)\Bigr)_m=\dfrac{1}{10}\left(\dfrac{1}{3^n\cdot 6 \cdot 2^{m}}\right)$ (see the Figure \ref{Figure20}). Now, for every $m\in\mathbb{N}$ we shall calculate the M\"obius transformations and its respective inverse

\begin{equation}\label{eq:47}
\begin{array}{ccl}
\Bigl(f_{n,k} \Bigr)_{1,m}(z) &=&\dfrac{(a_{n,k})_{1,m}z+(b_{n,k})_{1,m}}{(c_{n,k})_{1,m}z+(d_{n,k})_{1,m}},\\
\Bigl(f_{n,k}\Bigr)_{1,m}^{-1}(z)&=&\dfrac{d_{1,m}z-b_{1,m}}{-c_{1,m}z+a_{1,m}},
\end{array}
\end{equation}
having as isometric circles $C\Bigl(f_{n,k} \Bigr)_{1,m}$ and $C\Bigl(f_{n,k} \Bigr)^{-1}_{1,m}$, respectively. By remarks \ref{r:2.9} we have
\[
\begin{array}{cll}
(a_{n,k})_{1,m} & = -(10\cdot 6\cdot 2^m(3^n+s_{2k-2}+1)+13),\\
(c_{n,k})_{1,m} & = 10\cdot 3^n\cdot 6\cdot 2^{m},  \\
(d_{n,k})_{1,m} & = -(10\cdot 6\cdot 2^m(3^n+s_{2k-2}+1)+17).\\
\end{array}
\]
Now, we substitute these values in the equation $(a_{n,k})_{1,m}\cdot (d_{n,k})_{1,m}-(b_{n,k})_{1,m}\cdot (c_{n,k})_{1,m}=1$ and computing we hold
\[
\begin{array}{cl}
(b_{n,k})_{1,m}=\dfrac{(10\cdot 6\cdot 2^m(3^n+s_{2k-2}+1)+13)(10\cdot 6\cdot 2^m(3^n+s_{2k-2}+1)+17)-1}{ 10\cdot 3^n\cdot 6\cdot 2^{m}}.
\end{array}
\]
Hence, we can easily write the explicit form of the M\"{o}bius transformations of the equations \ref{eq:47}. Then for every $k\in\{0,\ldots,2^{n-1}-1\}$ we define the sets
\begin{equation}\label{eq:48}
\begin{array}{cl}
\Bigl(J_{n,k}\Bigr)_{left-right} &:=\left\{\Bigl(f_{n,k}\Bigr)_{1,m}(z), \, \Bigl(f_{n,k}\Bigr)^{-1}_{1,m}(z), \, \Bigl(f_{n,k}\Bigr)_{2,m}(z), \Bigl(f_{n,k}\Bigr)_{2,m}^{-1}(z): m\in\mathbb{N} \right\},\\
\Bigl(\mathcal{C}_{n,k}\Bigr)_{left-right} & := \left\{ C\Bigl(f_{n,k}\Bigr)_{3,m}, \, C\Bigl(f_{n,k}\Bigr)^{-1}_{3,m}, \, C\Bigl(f_{n,k}\Bigr)_{4,m}, \, C\Bigl(f_{n,k}\Bigr)_{4,m}^{-1}:m\in\mathbb{N} \right\}. \\
\end{array}
\end{equation}
By construction each M\"{o}bius transformation of $\Bigl(J_{n,k}\Bigr)_{left-right}$ is hyperbolic and the half-circle of $\Bigl(\mathcal{C}_{n,k}\Bigr)_{left-right}$ are pairwise disjoint.

\textbf{Part II. Building  sequences of half-circle on the right $C(f_{n,k})$ and at the left $C(f^{-1}_{n,k})$.} For each $k\in\{0,\ldots,2^{n-1}-1\}$ we consider the fifth sixth closed subinterval of each
$$\hat{I}_{n,k}=\left[\dfrac{3^n+s_{2k-2}+1}{3^n}, \dfrac{3^n+ s_{2k-1}}{3^n}\right],$$
which are given by
\[
\begin{array}{cl}
S_{n,k}&=\left[\dfrac{3^n+s_{2k-1}}{3^n}-\dfrac{2}{3^n\cdot 6}, \dfrac{3^n+ s_{2k-1}}{3^n}-\dfrac{1}{3^n\cdot 6}\right],\\
&=\left[\dfrac{6(3^n+s_{2k-1})-2}{3^n\cdot 6}, \dfrac{6(3^n+ s_{2k-1})-1}{3^n\cdot 6}\right],
\end{array}
\]
and  we denote as $-S_{n,k}$ the closed interval symmetric with respect to the imaginary axis \emph{i.e.}
\[
-S_{n,k}=\left[-\dfrac{6(3^n+s_{2k-1})-2}{3^n\cdot 6},- \dfrac{6(3^n+ s_{2k-1})-1}{3^n\cdot 6} \right].
\]

Then we write $S_{n,k}$ and $-S_{n,k}$ as the union of closed subintervals $\bigcup\limits_{m\in\mathbb{N}}\Bigl(S_{n,k}\Bigr)_m$ and $\bigcup\limits_{m\in\mathbb{N}}\Bigr(-S_{n,k}\Bigl)_m$ respectively as
\begin{equation}\label{eq:49}
\begin{array}{clccc}
  \Bigr(S_{n,k}\Bigl)_m & =\left[\dfrac{6(3^n+ s_{2k-1})-1}{3^n\cdot 6} -\dfrac{1}{3^n\cdot 6 \cdot 2^{m-1}}, \dfrac{6(3^n+ s_{2k-1})-1}{3^n\cdot 6}- \dfrac{1}{3^n\cdot 6 \cdot 2^m} \right], \\
      &=\left[ \dfrac{2^{m-1}(6(3^n+s_{2k-1})-1)-1}{3^n\cdot 6\cdot 2^{m-1}} , \dfrac{2^{m}(6(3^n+s_{2k-1})-1)-1}{3^n\cdot 6\cdot 2^{m}} \right],\\
  \Bigl(-S_{n,k}\Bigr)_m&=\left[ -\dfrac{6(3^n+ s_{2k-1})-1}{3^n\cdot 6}-\dfrac{1}{3^n\cdot 6 \cdot 2^m}, -\dfrac{6(3^n+ s_{2k-1})-1}{3^n\cdot 6} -\dfrac{1}{3^n\cdot 6 \cdot 2^{m-1}} \right],\\
      & =\left[-\dfrac{2^{m}(6(3^n+s_{2k-1})-1)-1}{3^n\cdot 6\cdot 2^{m}},  -\dfrac{6(3^n+ s_{2k-1})-1}{3^n\cdot 6} +\dfrac{1}{3^n\cdot 6 \cdot 2^{m-1}} \right].\\
\end{array}
\end{equation}
By definition  the length of $\Bigl(S_{n,k}\Bigr)_m$ and $\Bigl(-S_{n,k}\Bigr)_m$ is $\dfrac{1}{3^n\cdot 6\cdot 2^{m}}$ for each $m\in\mathbb{N}$, then  we choose the points

\begin{equation}\label{eq:50}
\begin{array}{cll}
\Bigl(\alpha_{n,k}\Bigr)_{3, m}& =\dfrac{2^{m-1}(6(3^n+s_{2k-1})-1)-1}{3^n\cdot 6\cdot 2^{m-1}}+\dfrac{7}{10}\left(\dfrac{1}{3^n\cdot 6\cdot 2^m}\right), \\
&=\dfrac{2^{m}(6(3^n+s_{2k-1})-1)-13}{3^n\cdot 6\cdot 10\cdot 2^{m}},\\
\Bigl(\alpha_{n,k}\Bigr)_{4, m}& = \dfrac{2^{m-1}(6(3^n+s_{2k-1})-1)-1}{3^n\cdot 6\cdot 2^{m-1}}+\dfrac{3}{10}\left(\dfrac{1}{3^n\cdot 6\cdot 2^m}\right),\\
&=\dfrac{2^{m}(6(3^n+s_{2k-1})-1)-17}{3^n\cdot 6\cdot 10\cdot 2^{m}},
\end{array}
\end{equation}
and the points $\Bigl(\alpha_{n,k}\Bigr)^{-1}_{3,m}$ and $\Bigl(\alpha_{n,k}\Bigl)^{-1}_{4,m}$ belonged to $\Bigl(-S_{1,1}\Bigr)_m$ as such

\[
\begin{array}{cl}
\Bigl(\alpha_{n,k}\Bigr)_{3, m}^{-1} &= -\dfrac{2^{m}(6(3^n+s_{2k-1})-1)-17}{3^n\cdot 6\cdot 10\cdot 2^{m}}, \\
\Bigl(\alpha_{n,k}\Bigr)_{4, m}^{-1} &=-\dfrac{2^{m}(6(3^n+s_{2k-1})-1)-13}{3^n\cdot 6\cdot 10\cdot 2^{m}}.
\end{array}
\]

Then we let $C\Bigl(f_{n,k} \Bigr)_{3,m}, C\Bigl(f_{n,k} \Bigr)^{-1}_{3,m}, C\Bigl(f_{n,k} \Bigr)_{4,m}, C\Bigl(f_{n,k} \Bigr)^{-1}_{4,m}$ be the $m$ half-circles having centers $\Bigl(\alpha_{n,k}\Bigr)_{3,m}$, $\Bigl(\alpha_{n,k}\Bigr)_{3, m}^{-1}$, $\Bigl(\alpha_{n,k}\Bigr)_{4,m}$ and $\Bigl(\alpha_{n,k}\Bigr)_{4, m}^{-1}$ respectively (see the equations \ref{eq:49} and \ref{eq:50}), and the same radius $\Bigl(r(n)\Bigr)_m=\dfrac{1}{10}\left(\dfrac{1}{3^n\cdot 6 \cdot 2^{m}}\right)$ (see the Figure \ref{Figure21}).
\begin{figure}[h!]
\begin{center}
\includegraphics[scale=0.6]{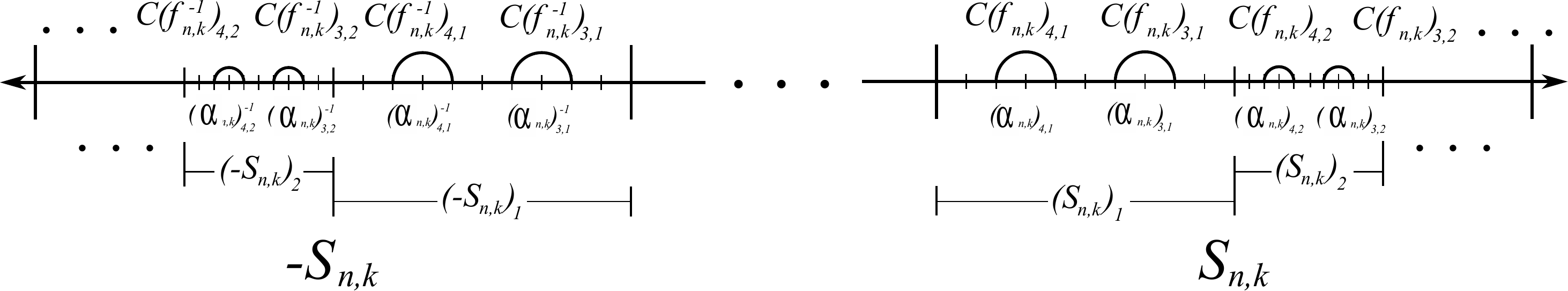}\\
\caption{\emph{The closed intervals $S_{n,k}$  and $-S_{n,k}$ written as an union of closed subintervals $\bigcup\limits_{m\in\mathbb{N}}\Bigl(S_{n,k}\Bigr)_m$ and $\bigcup\limits_{m\in\mathbb{N}}\Bigl(-S_{n,k}\Bigl)_m$.}}
 \label{Figure21}
\end{center}
\end{figure}

\begin{remark}
By construction the centers of the half-circles $C\Bigl(f_{n,k} \Bigr)_{3,m}$ and $C\Bigl(f_{n,k} \Bigr)^{-1}_{4,m}$ (corresponding, $C\Bigl(f_{n,k} \Bigr)^{-1}_{3,m}$ and $C\Bigl(f_{n,k} \Bigr)_{4,m}$) are symmetrical with respect to the imaginary axis \emph{i.e.}, we have $-\Bigl(\alpha_{n,k}\Bigr)_{3,m}=\Bigl(\alpha_{n,k}\Bigr)^{-1}_{4,m}$ (respectively, $-\Bigl(\alpha_{n,k}\Bigr)_{4,m}=\Bigl(\alpha_{n,k}\Bigr)^{-1}_{3,m}$).
\end{remark}

Now, we shall calculate the M\"obius transformations and their respective inverse
\begin{equation}\label{eq:51}
\begin{array}{ccl}
\Bigl(f_{n,k} \Bigr)_{3,m}(z) &=&\dfrac{\Bigl(a_{n,k}\Bigr)_{3,m}z+\Bigl(b_{n,k}\Bigr)_{3,m}}{\Bigl(c_{n,k}\Bigr)_{3,m}z+\Bigl(d_{n,k}\Bigr)_{3,m}},\\
\Bigl(f_{n,k}\Bigr)_{3,m}^{-1}(z)&=&\dfrac{\Bigl(d_{n,k}\Bigr)_{3,m}z-\Bigl(b_{n,k}\Bigr)_{3,m}}{-\Bigl(c_{n,k}\Bigr)_{3,m}z+\Bigl(a_{n,k}\Bigr)_{3,m}},
\end{array}
\end{equation}
having as isometric circles $C\Bigl(f_{n,k} \Bigr)_{3,m}$ and $C\Bigl(f_{n,k} \Bigr)^{-1}_{3,m}$, respectively. By remarks \ref{r:2.9} we have
\[
\begin{array}{cll}
\Bigl(a_{n,k}\Bigr)_{3,m} & =- (2^{m}(6(3^n+s_{2k-1})-1)-13) ,\\
\Bigl(c_{n,k}\Bigr)_{3,m} & = 10\cdot 3^n\cdot 6\cdot 2^m , \\
\Bigl(d_{n,k}\Bigr)_{3,m} & = -(2^{m}(6(3^n+s_{2k-1})-1)-17).\\
\end{array}
\]
Now, we substitute these values in the equation $\Bigl(a_{n,k}\Bigr)_{3,m}\cdot \Bigl(d_{n,k}\Bigr)_{3,m}-\Bigl(b_{n,k}\Bigr)_{3,m}\cdot \Bigl(c_{n,k}\Bigr)_{3,m}=1$ and computing we hold
\[
\begin{array}{cl}
\Bigl(b_{n,k}\Bigr)_{3,m} & = \dfrac{(2^{m}(6(3^n+s_{2k-1})-1)-13)(2^{m}(6(3^n+s_{2k-1})-1)-17)-1}{10\cdot 3^n\cdot 6\cdot 2^m}.\\
\end{array}
\]
Hence, we can easily write the explicit form of the M\"{o}bius transformations of equations \ref{eq:51}.

On the other hand, for every $m\in\mathbb{N}$ we shall calculate the M\"obius transformations and its respective inverse
\begin{equation}\label{eq:52}
\begin{array}{ccl}
\Bigl(f_{n,k}\Bigr)_{4,m}(z) & = & \dfrac{\Bigl(a_{n,k}\Bigr)_{4,m}z+\Bigl(b_{n,k}\Bigr)_{4,m}}{\Bigl(c_{n,k}\Bigr)_{4,m}z+\Bigl(d_{n,k}\Bigr)_{4,m}},\\
\Bigl(f_{n,k}\Bigr)_{4,m}^{-1}(z) & = &\dfrac{\Bigl(d_{n,k}\Bigr)_{4,m}z-\Bigl(b_{n,k}\Bigr)_{4,m}}{-\Bigl(c_{n,k}\Bigr)_{4,m}z+\Bigl(a_{n,k}\Bigr)_{4,m}},\\
\end{array}
\end{equation}
having as isometric circles $C\Bigl(f_{n,k} \Bigr)_{4,m}$  and $C\Bigl(f_{n,k} \Bigr)^{-1}_{4,m}$, respectively. By remarks \ref{r:2.9} we get

\[
\begin{array}{cll}
\Bigl(a_{n,k}\Bigr)_{4,m} & = -(2^{m}(6(3^n+s_{2k-1})-1)-13) ,\\
\Bigl(c_{n,k}\Bigr)_{4,m} & = 10\cdot 3^n\cdot 6\cdot 2^m  , \\
\Bigl(d_{n,k}\Bigr)_{4,m} & = -(2^{m}(6(3^n+s_{2k-1})-1)-17).\\
\end{array}
\]
Now, we substitute these values in the equation $\Bigl(a_{n,k}\Bigr)_{4,m}\cdot \Bigl(d_{n,k}\Bigr)_{4,m}-\Bigl(b_{n,k}\Bigr)_{4,m}\cdot \Bigl(c_{n,k}\Bigr)_{4,m}=1$ and computing we hold
\[
\begin{array}{cl}
\Bigl(b_{n,k}\Bigr)_{4,m} & = \dfrac{(2^{m}(6(3^n+s_{2k-1})-1)-13)(2^{m}(6(3^n+s_{2k-1})-1)-17)-1}{10\cdot 3^n \cdot 6\cdot 2^m}.\\
\end{array}
\]
Hence, we can easily write the explicit form of the M\"{o}bius transformations of the equations \ref{eq:52}. Then for each $k\in\{0,\ldots, 2^{n-1}-1\}$ we define the set
\begin{equation}\label{eq:53}
\begin{array}{cl}
\Bigl(J_{n,k}\Bigr)_{right-left} &:=\left\{\Bigl(f_{n,k}\Bigr)_{3,m}(z), \, \Bigl(f_{n,k}\Bigr)^{-1}_{3,m}(z), \, \Bigl(f_{n,k}\Bigr)_{4,m}(z), \Bigl(f_{n,k}\Bigr)_{4,m}^{-1}(z): m\in\mathbb{N} \right\},\\
\Bigl(\mathcal{C}_{n,k}\Bigr)_{right-left} & := \left\{ C\Bigl(f_{n,k}\Bigr)_{3,m}, \, C\Bigl(f_{n,k}\Bigr)^{-1}_{3,m}, \, C\Bigl(f_{n,k}\Bigr)_{4,m}, \, C\Bigl(f_{n,k}\Bigr)_{4,m}^{-1}:m\in\mathbb{N} \right\}. \\
\end{array}
\end{equation}
By construction each M\"{o}bius transformation of $\Bigl(J_{n,k}\Bigr)_{right-left}$ is hyperbolic and the half-circle of $\Bigl(\mathcal{C}_{n,k}\Bigr)_{right-left}$ are pairwise disjoint.

Finally, from  equations \ref{eq:44}, \ref{eq:48}, and \ref{eq:53} we define the sets of the M\"{o}bius maps and their respective isometric circles, as
\begin{equation}\label{eq:54}
\begin{array}{cl}
J_{n} 
     & := \left\{f_{n,k}(z), \, f_{n,k}^{-1}(z), \, \Bigl( f_{n,k}\Bigr)_{s,m}(z), \, \Bigl( f_{n,k}\Bigr)_{s,m}^{-1}(z): k\in\{0,\ldots,2^{n-1}-1\}, \, s\in \{1,\ldots, 4\}, \, m\in\mathbb{N}\right\},\\
\mathcal{C}_n & := \left\{C(f_{n,k}), \, C(f_{n,k}^{-1}), \, C\Bigl( f_{n,k}\Bigr)_{s,m}, \, C\Bigl( f_{n,k}\Bigr)_{s,m}^{-1}: k\in\{0,\ldots,2^{n-1}-1\}, \, s\in \{1,\ldots, 4\}, \, m\in\mathbb{N}\right\}.\\
\end{array}
\end{equation}
We remark that by construction each M\"{o}bius transformation of $J_{n}$ is hyperbolic and the half-circle of $\mathcal{C}_{n}$ are pairwise disjoint.

By the previous recursive construction of M\"{o}bius transformations and half-circles we define the set
\begin{equation}\label{eq:55}
\begin{array}{cl}
J &:=\bigcup\limits_{n\in\mathbb{N}} J_n,\\
\mathcal{C}&:=\bigcup\limits_{n\in\mathbb{N}}\mathcal{C}_{n},\\
\end{array}
\end{equation}
%
and we denote as $\Gamma$ the subgroup of $PSL(2,\mathbb{R})$ generated by the union $J$. We note that by construction each M\"{o}bius transformation of $J$ is hyperbolic and the half-circle of $\mathcal{C}$ are pairwise disjoint.

\vspace{2mm}
\noindent \textbf{Step 2. The group $\Gamma$ is a Fuchsian group.} In order to $\Gamma$ will be a Geometric Schottky group, we shall define a Schottky description for it. Hence, by proposition \ref{p:2.16} conclude that $\Gamma$ is Fuchsian.

\vspace{1mm}
\noindent We notice that the elements belonged to the set $J$ (see the equation \ref{eq:55}) can be indexed by a symmetric subset of $\mathbb{Z}$. Merely, we let $P:=\{p_n\}_{n\in\mathbb{N}}$ be the subset of $\mathbb{N}$ composed by all the primes numbers, then  is easy to check that the map $\psi : J \to  \mathbb{Z}$  such that
\[
f_{n,k}(z) \mapsto p_{4+n}^k, \quad f_{n,k}^{-1}(z)\mapsto -p_{4+n}^k, \quad \Bigl( f_{n,k} \Bigr)_{s,m}(z)\mapsto p_s^m \cdot p_{4+n}^k, \quad \Bigl( f_{n,k} \Bigr)^{-1}_{s,m}(z)\mapsto  -p_s^m \cdot p_{4+n}^k,
\]
for every $n, m\in \mathbb{N}$, $k\in\{0,\ldots,2^{n-1}-1\}$, $s\in\{1,\ldots,4\}$, it is well-defined and injective. We note that the image of $J$ under $\psi$ is a symmetric subset of $\mathbb{Z}$, which we denote as $I$. Given that for each element $k$ belonged to $I$ there is a unique transformation $f \in J$, such that $\psi(f)=k$, we label the map $f$ as $f_k$ and its respective isometric circle $C(f)$ as $C(f_k)$. Hence, we re-write the sets $J$ and $\mathcal{C}$ as
\[
\begin{array}{cl}
J & = \left\{f_k(z)\right\}_{k\in I},\\
\mathcal{C}     & =\{C(f_k)\}_{k\in I}.
\end{array}
\]

On the other hand, we define the set $\{A_k\}_{k\in I}$ where $A_k$ is the straight segment in the real line $\mathbb{R}$ whose ends points coincide with the endpoints at  infinite of the half-circle $C(f_{k})$ (see the equation \ref{eq:55}). In other words, it is the straight segment joining the endpoints of $C(f_k)$ to the isometric circle of $f_k(z)$.

We claim that the pair
\begin{equation}\label{eq:56}
(\{A_k\}, \{f_k\})_{k\in I}
\end{equation}
is a Schottky description.

Regarding the recursive construction of the family $J=\{f_k\}_{k\in I}$ described above, it is immediate that the pair $(\{A_k\},\{f_k\})_{f_k\in J}$ satisfies the conditions from 1 to 4 of definition \ref{d:2.14}. Thus, we must only prove that the fact 5 is done. The proof is the same as in the Cantor tree case.

\vspace{2mm}
\noindent \textbf{Step 3. Holding the surface called the Blooming Cantor tree.} The Geometric Schottky group $\Gamma$ acts freely and properly discontinuously on the open subset $\mathbb{H}-K$, where the subset $K\subset \mathbb{H}$ is defined as in equation \ref{eq:12}. In this case we note that the set $K$ is empty because of any two different elements of $\mathcal{C}$ are empty. Then the quotient space
\begin{equation}\label{eq:57}
S:= \mathbb{H}/\Gamma
\end{equation}
is  a well-defined  and through the projection map $\pi: \mathbb{H} \to S$ is a hyperbolic surface.  We shall prove that $S$ is homeomorphic to the blooming Cantor tree. In other words, $S$ has ends spaces the cantor set and each end has infinite genus. To prove this we will use the same ideas as the Cantor tree case. First we will describe the end space of $S$ using the property of $\sigma$-compact of $S$. Moreover, we will show that ends of $S$ have infinite genus. To conclude, we will define a homeomorphism $f$ from the ends spaces of the Cantor binary tree $Ends(T2^\omega)$ onto the ends space $Ends(S)$. The following remark is necessary.

\begin{remark}
We let $F(\Gamma)$ be the standard fundamental of the Geometric Schottky group $\Gamma$, as such
\begin{equation}
F(\Gamma):=\bigcap_{i\in I} \overline{\hat{C}(f_i)}\subseteq \mathbb{H},
\end{equation}
By the proposition \ref{p:2:17} it is a fundamental domain for $\Gamma$ having the following properties.
\begin{enumerate}
\item It is connected and locally finite having infinite hyperbolic area. Further, its boundary is composed by the family of half-circle $\mathcal{C}$ (see equation \ref{eq:55}). In other words, it consists of infinitely many hyperbolic geodesic with ends points at infinite and mutually disjoint.

\item It is  a non-compact Dirichlet region and the quotient space $F(\Gamma)/\Gamma$ is homeomorphic to $S$, then the quotient space $S$ is also a non-compact hyperbolic surface with infinite hyperbolic area (see \cite[Theorem 14.3 p. 283]{KS2}).
 \end{enumerate}
\end{remark}

Since surfaces are $\sigma$-compact space, for $S$ there is an exhaustion of $S=\bigcup_{n\in\mathbb{N}} K_n$ by compact sets whose complements define the  ends spaces of the surface $S$. More precisely,

\vspace{1.5mm}
\noindent \textbf{For $n=1$.} We consider the radius $r(1)=\dfrac{1}{6\cdot 3\cdot 2}$ given in the recursive construction of $\Gamma$ and define the compact subset $\tilde{K}_1$ of the hyperbolic plane $ \mathbb{H}$ as follows
$$
\tilde{K}_1:=\{ z\in \mathbb{H}: -2 \leq Re(z) \leq 2, \, \text{ and } \, r(1) \leq Im (z) \leq 1\}.
$$
The image of the intersection $\tilde{K}_1\cap F(\Gamma)$ under the projection map $\pi$
$$
\pi ({\tilde{K}_1 \cap F(\Gamma)}):=K_1\subset S,
$$
is a compact subset of $S$. By definition of $\tilde{K}_1$ the different $S\setminus K_1$ consists of two connected components whose closure in $S$ are non-compact, and they have compact boundary. Hence, we can write
\[
S\setminus K_1:=U_0 \sqcup U_1.
\]
We note that by construction each connected component of $S\setminus K_1$ has infinite genus.

\begin{remark}\label{r:3.9}
The set of connected components of $S\setminus K_1$ and the set defined as $2^1:=\{0,1\}$ are equipotent. In other words, they are in one-to-one relation.
\end{remark}

\vspace{1mm}
\noindent \textbf{For $n=2$.} We consider the radius $r(2)=\dfrac{1}{6\cdot 3^2\cdot 2}$ given in the recursive construction of $\Gamma$ and define the compact subset $\tilde{K}_2$ of the hyperbolic plane $\mathbb{H}$ as follows
$$
\tilde{K}_2:=\{ z\in \mathbb{H}: -3 \leq Re(z) \leq 3, \, \text{ and } \, r(2)\leq Im(z) \leq 2\}.
$$
By construction $\tilde{K}_1\subset \tilde{K}_2$ and the image of the intersection $\tilde{K}_2\cap F(\Gamma)$ under the projection map $\pi$
$$
\pi ({\tilde{K}_2 \cap F(\Gamma)}):=K_2\subset S,
$$
is a compact subset of $S$ such that $K_1\subset K_2$. By definition of $\tilde{K}_2$ the different $S\setminus K_2$ consists of $2^2$ connected components whose closure in $S$ are non-compact, and they have compact boundary. Moreover, for every $l\in 2^1$ there exist exactly two connected components of $S\setminus K_2$ contained in $U_l\subseteq S\setminus K_1$. Hence, we can write
\[
S\setminus K_2:=U_{0,\, 0}\sqcup U_{0,\, 1} \sqcup U_{1,\, 0}\sqcup U_{1,\, 1}=\bigsqcup\limits_{l\in2^1} (U_{l,\, 0}\sqcup U_{l,\, 1}),
\]
so that $U_{l, \, 0}, U_{l, \,1}\subset U_l$ for every $l\in2^1$. We note that by construction each connected component of $S\setminus K_2$ has infinite genus.

\begin{remark}\label{r:3.10}
The set of connected components of $S\setminus K_2$ and the set defined as $2^2:= \prod\limits_{i=1}^{2}\{0,1\}_i$ are equipotent. In other words, they are in one-to-one relation.
\end{remark}

\vspace{1mm}
Following recursive the construction above, for $n$ we consider the radius $r(n)$ given in the recursive construction of $\Gamma$ and define the compact subset $\tilde{K}_n$ of the hyperbolic plane $\mathbb{H}$ as follows
$$\tilde{K}_n:=\{ z\in \mathbb{H}: -(n+1) \leq Re(z) \leq n+1, \, \text{ and } \, r(n)\leq Im(z) \leq n\}.$$
By construction $\tilde{K}_{n-1}\subset \tilde{K}_n$ and the image of the intersection $\tilde{K}_n\cap F(\Gamma)$ under the projection map $\pi$
$$\pi ({\tilde{K}_n \cap F(\Gamma)}):=K_n\subset S,$$
is a compact subset of $S$ in which $K_{n-1}\subset K_n$. By definition of $\tilde{K}_n$ the difference $S\setminus K_n$ consists of $2^n$ connected components whose closure in $S$ are non-compact, and they have compact boundary. Moreover, for every $l\in 2^{n-1}$ there exist exactly two connected components of $S\setminus K_{n-1}$ contained in $U_l \subset S\setminus K_{n-1}$. Hence, we can write
\[
S\setminus K_n:= \bigsqcup_{l\in 2^{n-1}} (U_{l,\,0} \sqcup U_{l,\, 1})
\]
such that $U_{l,\, 0}, U_{l,\, 1}\subset U_l$ for every $l\in 2^{n-1}$. We note that by construction each connected component of $S\setminus K_n$ has infinite genus.

\begin{remark}\label{r:3.11}
The set of connected components of $S\setminus K_n$ and the set defined as $2^n:= \prod\limits_{i=1}^{n}\{0,1\}_i$ are equipotent. In other words, they are in one-to-one relation.
\end{remark}

This recursive construction induces the desired numerable family of increasing compact subset $\{K_n\}_{n\in\mathbb{N}}$ covering the surface $S$, 
$$
S=\bigcup_{n\in\mathbb{N}} K_n.
$$
Thus, the ends space of $S$ is composed by all sequences $(U_{l_n})_{n\in \N}$ such that $U_{l_n} \subset S\setminus K_n$ and $U_{l_n}\supset U_{l_{n+1}}$, for each $n\in\mathbb{N}$. Further, $l_n\in 2^n$, $l_{n+1}\in 2^{n+1}$ such that $\pi_{i}(l_n)=\pi_{i}(l_{n+1})$ for every $i\in\{1,\ldots,n\}$, (see subsection Cantor binary tree). By construction, each element $U_{l_n}$ of the sequence $(U_{l_n})_{n\in \N}$ has infinite genus, it means each ends of the surface $S$ has infinite genus.

Hence, we define the map
\[
\begin{array}{ccc}
f :  Ends(T2^\omega) \to  Ends(S), &  &          (v_i)_{i\in\mathbb{N}}  \mapsto  [U_{v_i}]_{i\in \N},
\end{array}
\]
and proceed verbatim as in the case of the Cantor tree.

\qed

\begin{corollary}
For all $n\in\mathbb{N}$, there is a classical Schottky subgroup $\Gamma_n$ of $\Gamma$ having rank $n$, such as the quotient space $S_n:=\mathbb{H}/\Gamma_n$ is a hyperbolic surface homeomorphic to the sphere punctured by $2n$ points and $Ends(S)=Ends_{\empty}(S)$. Further, the fundamental group of $S_n$ is isomorphic to $\Gamma_n$.
\end{corollary}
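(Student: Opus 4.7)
The plan is to imitate, essentially word-for-word, the argument already sketched for Corollary \ref{c:3.10} in the Cantor tree case. Fix $n\in\mathbb{N}$ and select $n$ distinct transformations $f_{i_1},\ldots,f_{i_n}$ from the countable generating set $J$ built in Step 1 of the proof of Theorem \ref{T:0.3}. Let $\Gamma_n:=\langle f_{i_1},\ldots,f_{i_n}\rangle\le\Gamma$. Since $\Gamma_n$ sits inside the Fuchsian group $\Gamma$ (Proposition \ref{p:2.16}), it is itself Fuchsian, and by construction it is finitely generated of rank $n$ with hyperbolic generators.

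The main structural input is that the Schottky description of $\Gamma$ established in Step 2 is inherited by any finite subfamily: each of the five conditions of Definition \ref{d:2.14} is local to the isometric circles in question, and therefore survives restriction to the symmetric sub-index set $\{\pm i_1,\ldots,\pm i_n\}$. Consequently $\Gamma_n$ is a \emph{classical} Schottky group of rank $n$, and its standard fundamental domain
\[
F(\Gamma_n)=\bigcap_{j=1}^{n}\Bigl(\overline{\hat{C}(f_{i_j})}\cap\overline{\hat{C}(f_{i_j}^{-1})}\Bigr)
\]
is bounded (Proposition \ref{p:2:17}) by exactly $2n$ pairwise disjoint half-circles, identified in pairs by the generators.

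To identify the homeomorphism type of $S_n:=\mathbb{H}/\Gamma_n$ and to settle the fundamental-group claim, I would run the $\sigma$-compact exhaustion of Step 3 verbatim, but now truncated: as soon as one of the compacta $\tilde K_m$ contains all $2n$ half-circles, the complement of $\pi(\tilde K_m\cap F(\Gamma_n))$ in $S_n$ splits into finitely many connected components with compact boundary, and a direct count of these components (mirroring the bookkeeping in Step 3 of the proofs of Theorems \ref{T:0.2} and \ref{T:0.3}) recovers the claimed topology. The isomorphism $\pi_1(S_n)\cong\Gamma_n$ is then immediate from item (4) of the Uniformization Theorem \ref{T:uni}, since $\mathbb{H}$ is the simply connected universal cover of $S_n$ and $\Gamma_n$ is its deck transformation group.

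The main obstacle is making the choice of the $n$ generators precise enough to produce the asserted end structure. In a finite classical Schottky quotient of rank $n$ every end is a planar funnel, so the assertion $Ends(S_n)=Ends_\infty(S_n)$ must be interpreted carefully: either it should be read as a typo for \textit{``all ends are planar''} (which matches the sphere-minus-$2n$-points description inherited from Corollary \ref{c:3.10}), or one must allow the selected generators to include sufficiently many of the `limit' transformations $(f_{n,k})_{s,m}$ coming from Part I and Part II of Step 1, which are precisely the elements responsible for the infinite genus in the Blooming Cantor tree construction. Under either reading the topology-identification step reduces to the truncated exhaustion above, and the corollary follows.
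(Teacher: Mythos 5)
Your overall strategy --- pass to a finite subfamily of the generators, observe that the Schottky description survives restriction to a finite symmetric index set, and identify the quotient through a truncated fundamental domain/exhaustion --- is exactly the paper's approach; its entire justification is the single line ``consider $\Gamma_n$ as the Fuchsian group generated by the set $J_n$ (see the equation \ref{eq:54}) and proceed verbatim,'' mirroring Corollary \ref{c:3.10}. (In fact the paper's own prescription is worse than yours, since $J_n$ in equation \ref{eq:54} is infinite and so cannot literally give a rank-$n$ group.)

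There is, however, one genuine gap in your version: the $n$ generators cannot be chosen arbitrarily from $J$. Trivially, $J$ lists $f$ and $f^{-1}$ as separate elements, so you must avoid selecting an inverse pair (the rank would drop). More seriously, the homeomorphism type of the quotient depends on the linking pattern of the paired isometric circles along $\mathbb{R}$, and the auxiliary generators $\bigl(f_{n,k}\bigr)_{s,m}$ were engineered precisely so that their circle pairs interleave: the centers satisfy $\bigl(\alpha_{1,1}\bigr)_{2,m}^{-1}<\bigl(\alpha_{1,1}\bigr)_{1,m}^{-1}<\bigl(\alpha_{1,1}\bigr)_{2,m}<\bigl(\alpha_{1,1}\bigr)_{1,m}$ with the pairing $1\leftrightarrow 1$, $2\leftrightarrow 2$, a linked configuration that produces a handle --- this is exactly why $\langle J_1\rangle$ in Remark \ref{r:3.17} yields the Jacob's ladder rather than a punctured sphere. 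So if your $n$ chosen transformations include such a linked pair, $S_n$ has positive genus and is not a sphere punctured by $2n$ points; this affects the main clause of the corollary, not merely the ends clause. The fix is to specify the generators: take $n$ of the principal transformations $f_{m,k}$, whose isometric circles are paired mirror-symmetrically about the imaginary axis and hence mutually unlinked, and then the punctured-sphere identification goes through exactly as in Corollary \ref{c:3.10}. Finally, the clause ``$Ends(S)=Ends_{\infty}(S)$'' in the statement refers to the ambient surface $S$ (the Blooming Cantor tree), not to $S_n$ --- it merely restates what was proved in Theorem \ref{T:0.3} --- so your proposed reading that one should include the transformations $\bigl(f_{n,k}\bigr)_{s,m}$ to force non-planar ends of $S_n$ should be discarded: doing so would break the punctured-sphere claim.
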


Indeed, consider $\Gamma_n$ as the Fuchsian group generated by the set $J_n$ (see the equation \ref{eq:54}) and proceed verbatim.

On the other hand, the following corollaries are immediate from Theorem \ref{T:uni} and the construction of the groups $\Gamma$.

\begin{corollary}
The fundamental group of the Blooming Cantor tree is isomorphic to $\Gamma$.
\end{corollary}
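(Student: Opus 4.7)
The plan is to derive this corollary directly from the Uniformization Theorem (Theorem \ref{T:uni}) together with Theorem \ref{T:0.3}. The heavy lifting has already been done: Theorem \ref{T:0.3} produces an infinitely generated Fuchsian group $\Gamma < PSL(2,\mathbb{R})$ together with a homeomorphism between $\mathbb{H}/\Gamma$ and the Blooming Cantor tree, so what remains is to connect $\Gamma$ to the fundamental group of the surface.

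First I would invoke the construction of $S:=\mathbb{H}/\Gamma$ in Step 3 of the proof of Theorem \ref{T:0.3}, where it is noted that the set $K$ of fixed points in $\mathbb{H}$ is empty because any two distinct elements of the family $\mathcal{C}$ of isometric circles are disjoint. Consequently, $\Gamma$ acts freely and properly discontinuously on $\mathbb{H}$, the projection $\pi:\mathbb{H}\to S$ is a regular covering, and since $\mathbb{H}$ is simply connected it is the universal (holomorphic) covering surface of the Riemann surface $S$, with cover group $\Gamma$.

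Next I would apply part (4) of Theorem \ref{T:uni} to $S$, which yields $\Gamma\cong \pi_1(S)$. Finally, since the fundamental group is a topological invariant and Theorem \ref{T:0.3} provides a homeomorphism between $S$ and the Blooming Cantor tree, we conclude
\[
\pi_1(\text{Blooming Cantor tree}) \;\cong\; \pi_1(S) \;\cong\; \Gamma.
\]

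There is no real obstacle: every ingredient is already in place, and the argument is a one-line consequence of Theorems \ref{T:uni} and \ref{T:0.3}. The only point meriting care is to confirm that $\mathbb{H}$ is actually the \emph{universal} cover in the sense demanded by Theorem \ref{T:uni}, which is immediate from free proper discontinuity of the $\Gamma$-action and the simply connectedness of $\mathbb{H}$. Exactly the same template proves the analogous fundamental-group corollaries already recorded for the Infinite Loch Ness monster and the Cantor tree, so this corollary should be written as a brief verbatim repetition of the earlier proofs.
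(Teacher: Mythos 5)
Your proposal is correct and matches the paper's own treatment: the paper simply states that this corollary is immediate from Theorem \ref{T:uni} together with the construction of $\Gamma$ (just as for the Loch Ness monster and Cantor tree cases), which is precisely the argument you spell out via the free, properly discontinuous action on $\mathbb{H}$ and part (4) of the Uniformization Theorem. No gaps; your version is just a more explicit write-up of the same one-line deduction.
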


\begin{corollary}
The fundamental group of the Cantor tree is isomorphic to any subgroup of the fundamental group of the blooming Cantor tree.
\end{corollary}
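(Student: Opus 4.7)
The plan is to interpret the corollary as the statement that $\pi_1(\text{Cantor tree})$ is isomorphic to a subgroup of $\pi_1(\text{Blooming Cantor tree})$, and to establish it by comparing the Fuchsian groups constructed in Theorems \ref{T:0.2} and \ref{T:0.3}. By Theorem \ref{T:uni} and the previous corollaries, the fundamental group of the Cantor tree is isomorphic to the Geometric Schottky group $\Gamma_C$ generated by the family $J^C := \bigcup_{n\in\mathbb{N}} J_n$ of Theorem \ref{T:0.2}, and the fundamental group of the Blooming Cantor tree is isomorphic to the Geometric Schottky group $\Gamma_B$ generated by the family $J^B := \bigcup_{n\in\mathbb{N}} J_n$ of Theorem \ref{T:0.3}. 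It therefore suffices to produce a group embedding $\Gamma_C \hookrightarrow \Gamma_B$.

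First I would observe that any Geometric Schottky group is free on the generators supplied by its Schottky description. Indeed, for a Schottky description $(\{A_k\}, \{f_k\})_{k\in I}$, the disjointness of the closed hyperbolic $\epsilon$-neighborhoods of the isometric circles (condition (5) of Definition \ref{d:2.14}) is precisely the hypothesis required by the ping-pong lemma: each non-trivial reduced word in the $f_k$'s translates the standard fundamental domain $F(\Gamma)$ to a tile disjoint from $F(\Gamma)$, so no such word equals the identity. Hence $\Gamma_C$ is free on the countable set $\{f_{n,k} : n\in\mathbb{N}, \, k\in\{0,\ldots,2^{n-1}-1\}\}$, while $\Gamma_B$ is free on the strictly larger indexing set obtained by adjoining the ``flower'' generators $\bigl(f_{n,k}\bigr)_{s,m}$ for $s\in\{1,\ldots,4\}$ and $m\in\mathbb{N}$.

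Next I would let $\Gamma'_B < \Gamma_B$ denote the subgroup generated by those generators $f^B_{n,k}$ of the blooming construction whose indices $(n,k)$ match the generators of $\Gamma_C$. Since any subset of a free basis of a free group generates a free subgroup on that subset, $\Gamma'_B$ is itself free on the same index set as $\Gamma_C$. The assignment $f^C_{n,k} \mapsto f^B_{n,k}$ therefore extends uniquely to a group isomorphism $\Gamma_C \cong \Gamma'_B$, which exhibits $\pi_1(\text{Cantor tree})$ as a subgroup of $\pi_1(\text{Blooming Cantor tree})$.

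The main technical obstacle is a careful justification of the freeness of Geometric Schottky groups on their Schottky generators; this is implicit in Zielicz's framework and in the proof of Proposition \ref{p:2:17}, but it must be made explicit here, because the coefficients of the transformations in Theorems \ref{T:0.2} and \ref{T:0.3} differ (radii $\tfrac{1}{3^n\cdot 2^2}$ versus $\tfrac{1}{3^n\cdot 12}$), so one cannot regard $\Gamma_C$ as a literal subgroup of $\Gamma_B$ and the embedding has to be built at the abstract group level.
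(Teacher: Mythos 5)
Your proposal is correct, and it is essentially the argument the paper has in mind: the paper offers no proof at all (it declares the corollary ``immediate from Theorem \ref{T:uni} and the construction of the groups $\Gamma$''), so what you have done is supply the missing details, and they are the right ones. Your reinterpretation of the statement as ``isomorphic to \emph{a} subgroup'' is the only tenable reading (taken literally, ``any subgroup'' would include the trivial and cyclic subgroups, for which the claim is false), and your observation that $\Gamma_C$ cannot be taken as a literal subgroup of $\Gamma_B$ inside $PSL(2,\mathbb{R})$ --- because the radii, hence the matrix entries, differ --- is exactly the point the paper glosses over. The one input you rightly isolate, freeness of a geometric Schottky group on its Schottky generators via ping-pong on the disjoint $\epsilon$-neighborhoods of Definition \ref{d:2.14}(5), is established in Zielicz's framework and is the same fact the paper implicitly relies on. If anything, you could shorten the argument further: both $\Gamma_C$ and $\Gamma_B$ are free of countably infinite rank, hence abstractly isomorphic, so $\pi_1$ of the Cantor tree is already isomorphic to the (improper) subgroup $\Gamma_B\leq\Gamma_B$; your embedding onto the proper free factor generated by the $f^B_{n,k}$ is a slightly more informative version of the same observation.
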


\textbf{Acknowledgments.} The authors sincerely thank  Jes\'us Muci\~no Raymundo, Rub\'en Antonio Hidalgo Ortega, and Fernando Hern\'andez Hern\'andez for their constructive conversations and  valuable help.


\begin{bibdiv}
 \begin{biblist}

\bib{Abi}{article}{
   author={Abikoff, William},
   title={The uniformization theorem},
   journal={Amer. Math. Monthly},
   volume={88},
   date={1981},
   number={8},
   pages={574-592},
   issn={0002-9890},
}

\bib{Bear1}{book}{
   author={Beardon, Alan F.},
   title={A Premier on Riemann Surfaces},
   series={London Mathematical Society Lecture Note Series},
   volume={78},
   publisher={Cambridge University Press, Cambridge},
   date={1984},
   pages={x+188},
   isbn={0-521-27104-5},
}

\bib{Bear}{book}{
   author={Beardon, Alan F.},
   title={The geometry of discrete groups},
   series={Graduate Texts in Mathematics},
   volume={91},
   publisher={Springer-Verlag, New York},
   date={1983},
   pages={xii+337},
   isbn={0-387-90788-2},
}

\bib{BJ}{article}{
   author={Button, Jack},
   title={All Fuchsian Schottky groups are classical Schottky groups},
   conference={
      title={The Epstein birthday schrift},
   },
   book={
      series={Geom. Topol. Monogr.},
      volume={1},
      publisher={Geom. Topol. Publ., Coventry},
   },
   date={1998},
   pages={117--125 (electronic)},
}

\bib{Carne}{book}{
       author={Carne, T. K},
       title={Geometry and groups},
        publisher={Cambridge University (electronic)},
        date={2012},
}

\bib{Dugu}{book}{
   author={Dugundji, James},
   title={Topology},
   note={Reprinting of the 1966 original;
   Allyn and Bacon Series in Advanced Mathematics},
   publisher={Allyn and Bacon, Inc., Boston, Mass.-London-Sydney},
   date={1978},
   pages={xv+447},
}

\bib{Far}{book}{
   author={Farkas, H. M.},
   author={Kra, I}
   title={Riemann Surfaces},
   series={Graduate Texts in Mathematics},
   volume={71},
   edition={2},
   publisher={Springer-Verlag, New York},
   date={1992},
   pages={xvi+363},
   isbn={0-387-97703-1},
   doi={10.1007/978-1-4612-2034-3},
}
	
\bib{Ford}{article}{
   author={Ford, L. R.},
   title={The fundamental region for a Fuchsian group},
   journal={Bull. Amer. Math. Soc.},
   volume={31},
   date={1925},
   number={9-10},
   pages={531--539},
}

\bib{Fre}{article}{
   author={Freudenthal, Hans},
   title={\"Uber die Enden topologischer R\"aume und Gruppen},
   language={German},
   journal={Math. Z.},
   volume={33},
   date={1931},
   number={1},
   pages={692--713},
}

\bib{Ghys}{article}{
   author={Ghys, {\'E}tienne},
   title={Topologie des feuilles g\'en\'eriques},
   language={French},
   journal={Ann. of Math. (2)},
   volume={141},
   date={1995},
   number={2},
   pages={387--422},
}


\bib{Hilbert}{article}{
   author={Hilbert, David},
   title={Mathematical problems},
   note={Reprinted from Bull. Amer. Math. Soc. {\bf 8} (1902), 437-479},
   journal={Bull. Amer. Math. Soc. (N.S.)},
   volume={37},
   date={2000},
   number={4},
   pages={407-436},
}


\bib{KS}{book}{
   author={Katok, Svetlana},
   title={Fuchsian groups},
   series={Chicago Lectures in Mathematics},
   publisher={University of Chicago Press, Chicago, IL},
   date={1992},
   pages={x+175},
}

\bib{KS2}{article}{
   author={Katok, Svetlana},
   title={Fuchsian groups, geodesic flows on surfaces of constant negative
   curvature and symbolic coding of geodesics},
   conference={
      title={Homogeneous flows, moduli spaces and arithmetic},
   },
   book={
      series={Clay Math. Proc.},
      volume={10},
      publisher={Amer. Math. Soc., Providence, RI},
   },
   date={2010},
   pages={243--320},
}

\bib{Ker}{book}{
   author={Ker\'ekj\'art\'o, B\'ela.},
   title={Vorlesungen \"uber Topologie I},
   series={Mathematics: Theory \& Applications},
   publisher={Springer},
   place={Berl\'in},
   date={1923},
   }

\bib{LJ}{book}{
   author={Lee, John M.},
   title={Introduction to topological manifolds},
   series={Graduate Texts in Mathematics},
   volume={202},
   publisher={Springer-Verlag, New York},
   date={2000},
   pages={xviii+385},
}

\bib{TM}{article}{
   author={Maitani, Fumio},
   author={Taniguchi, Masahiko},
   title={A condition for a circle domain and an infinitely generated
   classical Schottky group},
   conference={
      title={Topics in finite or infinite dimensional complex analysis},
   },
   book={
      publisher={Tohoku University Press, Sendai},
   },
   date={2013},
   pages={169--175},
}

\bib{MB}{book}{
   author={Maskit, Bernard},
   title={Kleinian groups},
   series={Grundlehren der Mathematischen Wissenschaften [Fundamental
   Principles of Mathematical Sciences]},
   volume={287},
   publisher={Springer-Verlag, Berlin},
   date={1988},
   pages={xiv+326},
}

\bib{PSul}{article}{
   author={Phillips, Anthony},
   author={Sullivan, Dennis},
   title={Geometry of leaves},
   journal={Topology},
   volume={20},
   date={1981},
   number={2},
   pages={209--218},
}

\bib{Pra}{book}{
    author={Prada, D\'uwang},
    title={A golden Cantor Set},
    language={Spanish},
    series={Undergrade Dissertation},
    publisher={Industrial University of Santander, Bucaramanga, Colombia},
    date={2006},
}

\bib{Ray}{article}{
   author={Raymond, Frank},
   title={The end point compactification of manifolds},
   journal={Pacific J. Math.},
   volume={10},
   date={1960},
   pages={947--963},
   }

\bib{Ian}{article}{
   author={Richards, Ian},
   title={On the classification of noncompact surfaces},
   journal={Trans. Amer. Math. Soc.},
   volume={106},
   date={1963},
   pages={259--269},
}

\bib{SPE}{article}{
   author={Specker, Ernst},
   title={Die erste Cohomologiegruppe von \"Uberlagerungen und
   Homotopie-Eigenschaften dreidimensionaler Mannigfaltigkeiten},
   language={German},
   journal={Comment. Math. Helv.},
   volume={23},
   date={1949},
   pages={303--333},
}

\bib{Will}{book}{
   author={Willard, Stephen},
   title={General topology},
   publisher={Addison-Wesley Publishing Co., Reading, Mass.-London-Don
   Mills, Ont.},
   date={1970},
}

\bib{Ziel}{book}{
    author={Zielicz, Anna M.},
    title={Geometry and dynamics of infinitely generated Kleinian groups-Geometrics Schottky groups},
    series={PhD Dissertation},
    publisher={Universit\"{a}t Bremen},
    date={2015},
}

\end{biblist}
\end{bibdiv}

\end{document}